\newtheorem{theorem}{Theorem}[section]
\newtheorem{lemma}[theorem]{Lemma}
\newtheorem{proposition}[theorem]{Proposition}
\newtheorem{corollary}[theorem]{Corollary}
\newtheorem{definition}[theorem]{Definition}
\newtheorem{example}[theorem]{Example}
\newtheorem{remark}[theorem]{Remark}
\newtheorem{question}[theorem]{Question}
\newcommand\esssup{\mathop{\rm esssup}}
\newcommand\tr{\mathop{\rm tr}}
\newcommand\nph{\varphi}
\newcommand\op{\mathop{\rm op}}
\newcommand\dd{\mathop{\rm d}}
\newcommand\eh{\mathop{\rm eh}}
\newcommand\hh{\mathop{\rm h}}
\newcommand\ph{\mathop{\rm ph}}
\newcommand\dom{\mathop{\rm dom}}
\newcommand\Aff{\mathop{\rm Aff}}
\newcommand\Assoc{\mathop{\rm Assoc}}
\newcommand\cb{\mathop{\rm cb}}
\newcommand\mcb{\mathop{\rm M}^{\cb}}
\newcommand{\cl}[1]{\mathcal{#1}}
\newcommand{\bb}[1]{\mathbb{#1}}
\begin{document}

\title{Local operator multipliers and positivity}

\author{N.M. Steen, I.G. Todorov and L. Turowska}

\address{Pure Mathematics Research Centre, Queen's University Belfast,
Belfast BT7 1NN, United Kingdom}

\email{nsteen01@qub.ac.uk}

\address{Pure Mathematics Research Centre, Queen's University Belfast,
Belfast BT7 1NN, United Kingdom}

\email{i.todorov@qub.ac.uk}

\address{Department of Mathematics, Chalmers University of Technology and the University of Gothenburg,
Sweden}

\email{turowska@chalmers.se}


\maketitle

\begin{abstract}
We establish an unbounded version of Stinespring's Theorem and a
lifting result for Stinespring representations of completely
positive modular maps defined on the space of all compact operators.
We apply these results to study positivity for Schur multipliers. We characterise positive local Schur multipliers, and
provide a description of positive local Schur multipliers of Toeplitz type. We introduce local
operator multipliers as a non-commutative analogue of local Schur
multipliers, and characterise them extending both the
characterisation of operator multipliers from \cite{jtt2009} and
that of local Schur multipliers from \cite{stt2011}. We provide a
description of the positive local operator multipliers in terms of
approximation by elements of canonical positive cones.
\end{abstract}

\section{Introduction}\label{s_intro}

A bounded function
$\nph:\mathbb{N}\times\mathbb{N}\rightarrow\mathbb{C}$ is called a
Schur multiplier if $(\nph(i,j)a_{ij})$ is the matrix of a bounded
linear operator on $\ell^2$ whenever $(a_{ij})$ is such. The study
of Schur multipliers was initiated by I. Schur in the early 20th
century, and a characterisation of these objects was given by A.
Grothendieck in his {\it R$\acute{e}$sum$\acute{e}$} \cite{Gro} (see
also \cite{Pi}). A measurable version of Schur multipliers was
developed by M.S. Birman and M.Z. Solomyak (see \cite{BS4} and the
references therein) and V. V. Peller \cite{peller_two_dim}. More
concretely, given standard measure spaces $(X,\mu)$ and $(Y,\nu)$
and a function $\nph : X\times Y \rightarrow \bb{C}$, one defines a
linear transformation $S_{\varphi}$ on the space of all
Hilbert-Schmidt operators from $H_1 = L^2(X,\mu)$ to $H_2 =
L^2(Y,\nu)$ by multiplying their integral kernels by $\varphi$; if
$S_{\varphi}$ is bounded in the operator norm (in which case $\nph$
is called a \emph{measurable Schur multiplier}), one extends it to
the space $\cl K(H_1,H_2)$ of all compact operators from $H_1$ into
$H_2$ by continuity. The map $S_{\varphi}$ is defined on the space
$\cl B(H_1,H_2)$ of all bounded linear operators from $H_1$ into
$H_2$ by taking the second dual of the constructed map on $\cl
K(H_1,H_2)$. A characterisation of measurable Schur multipliers,
extending Grothendieck's result, was obtained in
\cite{peller_two_dim} (see also \cite{spronk}). Namely, a function
$\nph\in L^{\infty}(X\times Y)$ was shown to be a Schur multiplier
if and only if $\nph$ coincides almost everywhere with a function of
the form $\sum_{k=1}^{\infty} a_k(x) b_k(y)$, where $(a_k)_{k\in
\bb{N}}$ and $(b_k)_{k\in \bb{N}}$ are families of essentially
bounded measurable functions such that $\esssup_{x\in X}
\sum_{k=1}^{\infty} |a_k(x)|^2 < \infty$ and $\esssup_{y\in Y}
\sum_{k=1}^{\infty} |b_k(y)|^2 < \infty$.

A local version of Schur multipliers was defined and studied in
\cite{stt2011}. Local Schur multipliers are, in general, unbounded,
but necessarily closable, densely defined linear transformations on
$\cl B(L^2(X,\mu),L^2(Y,\nu))$. A measurable function $\nph :
X\times Y\rightarrow \bb{C}$ was shown in \cite{stt2011} to be a
local Schur multiplier if and only if it agrees almost everywhere
with a function of the form $\sum_{k=1}^{\infty} a_k(x) b_k(y)$,
where $(a_k)_{k\in \bb{N}}$ and $(b_k)_{k\in \bb{N}}$ are families
of measurable functions such that $\sum_{k=1}^{\infty} |a_k(x)|^2 <
\infty$ for almost all $x\in X$ and $\sum_{k=1}^{\infty} |b_k(y)|^2
< \infty$ for almost all $y\in Y$.

In \cite{ks2006}, a quantised version of Schur multipliers, called
universal operator multipliers, was introduced. Universal operator
multipliers are defined as elements of C*-algebras satisfying
certain boundedness conditions, and hence are non-commutative
versions of continuous Schur multipliers. A characterisation of
universal operator multipliers, generalising Grothendieck-Peller's
results, was obtained in \cite{jltt2009}.

In the present paper, we introduce and study local operator
multipliers. Due to their spatial nature, the natural setting here
is that of von Neumann algebras. Pursuing the analogue with the
commutative setting, where local multipliers are measurable (not
necessarily bounded) functions on two variables, we define local
operator multipliers as operators affiliated with the tensor product
of two von Neumann algebras. We characterise local operator
multipliers, extending both the description of local Schur
multipliers from \cite{stt2011} and the description of universal
operator multipliers from \cite{jltt2009}. We further characterise
the positive local Schur multipliers (Section \ref{s_plsm}), as well
as the positive local operator multipliers (Section \ref{s_pos}). We
describe positive local Schur multipliers of Toeplitz type, and
consider local Schur multipliers that are divided differences, that
is, functions of the form $\frac{f(x) - f(y)}{x - y}$. We show that
such a divided difference is a positive local Schur multiplier
with respect to every standard Borel measure if and only if $f$ is an operator monotone function.

Our main tool for characterising positivity of multipliers is an
unbounded version of Stinespring's Theorem (Section
\ref{unbounded_stinespring}). In the literature, there are a number
of versions of Stinespring's Theorem for completely positive, not
necessarily bounded, maps, defined on
*-algebras, see e.g. \cite{joita}, \cite{powers} and \cite{schudgen}. Our version
differs from the existing ones in that the domain is a non-unital
pre-C*-algebra, and a partial boundedness of the map is assumed --
as a result, we are able to obtain more specific conclusions
regarding the (closable) operator implementing the Stinespring
representation. In Section~ \ref{s_rtst}, we prove a lifting result
for Stinespring representations of completely positive maps defined
on the space of compact operators (Theorem \ref{p_compl2}). The
result, which we believe is interesting in its own right, is used in
Section~ \ref{s_plsm} to obtain a lifting result for positive Schur
multipliers, and provides an alternative approach to the unbounded
Stinespring Theorem from Section~ \ref{unbounded_stinespring}.

\medskip

All Hilbert spaces appearing in the paper will be assumed to be
separable. The inner product of a Hilbert space $H$ is denoted by
$(\cdot,\cdot)_H$, if $H$ needs to be emphasised. We let $I_H$
denote the identity operator acting on $H$, and write $I$ when $H$
is clear from the context. For Hilbert spaces $H$ and $K$, we denote
by $\cl B(H,K)$ (resp. $\cl K(H,K)$, $\cl C_2(H,K)$) the space of
all bounded linear (resp. compact, Hilbert-Schmidt) operators from
$H$ into $K$, and let $\cl B(H) = \cl B(H,H)$, $\cl K(H) = \cl
K(H,H)$ and $\cl C_2(H) = \cl C_2(H,H)$. We denote by $I_H$ the
identity operator acting on $H$. The operator norm is denoted by
$\|\cdot\|_{\op}$. We often use the weak* topology of $\cl B(H,K)$,
which arises from the identification of this space with the dual of
the space of all nuclear operators from $K$ into $H$. The weak*
continuous linear maps on $\cl B(H,K)$ will be referred to as normal
maps. If $\alpha$ is a cardinal number, we let $H^{\alpha}$ denote
the direct sum of $\alpha$ copies of $H$, and for $x\in \cl B(H)$,
we let $x\otimes 1_{\alpha}$ be the ampliation of $x$ acting on
$H^{\alpha}$. We let $\ell^2_{\alpha}$ be the Hilbert space of
square summable sequences of length $\alpha$.

Throughout the paper, we will use notions and results from Operator
Space Theory; we refer the reader to the monographs \cite{blm},
\cite{er}, \cite{paulsen} and \cite{pisier_intr}. If $\cl A\subseteq
\cl B(H)$ is a C*-algebra, we denote by $\cl A'$ the commutant of
$\cl A$, and by $M_{n,m}(\cl A)$ the set of all $n\times m$ matrices
with entries in $\cl A$ which define bounded operators (here $n$ or
$m$ may be $\infty$). For a matrix $a\in M_{n,m}(\cl A)$, we denote
by $a^t$ its transposed matrix. If $\cl M\subseteq \cl B(H)$ is a
von Neumann algebra, we will denote by $\Aff\mathcal{M}$ the set of
all densely defined operators on $H$ that are affiliated with $\cl
M$; thus, $T\in\Aff\mathcal{M}$ if and only if the spectral measure
of the operator $|T| = (T^*T)^{1/2}$ takes values in $\cl M$ and the partial isometry in the polar
decomposition of $T$ belongs to $\cl M$. The domain of an (unbounded) operator $T$ will be denoted
by $\dom(T)$.

If $\cl A$ and $\cl B$ are linear spaces, we will denote by $\cl
A\odot \cl B$ their algebraic tensor product; if $\cl A$ and $\cl B$
are von Neumann algebras, their weak* spatial tensor product will be
denoted by $\cl A\bar\otimes\cl B$. The linear span of a subset $\cl
X$ of a vector space will be denoted by $[\cl X]$.

\section{An unbounded version of Stinespring's theorem}\label{unbounded_stinespring}

The classical Stinespring's Representation Theorem for completely
positive maps states that if $\mathcal{A}$ is a unital C*-algebra
and $\Phi:\mathcal{A}\rightarrow\mathcal{B}(H)$ is a completely
positive map, then there exists a Hilbert space $K$, a unital
*-homomorphism $\pi:\mathcal{A}\rightarrow\mathcal{B}(K)$ and a
bounded operator $V:H\rightarrow K$ with $\|\Phi(1)\|=\|V\|^2$ such
that $\Phi(a)=V^*\pi(a)V$, $a\in \cl A$. In the case where
$\overline{[\pi(\mathcal{A})VH]}=K$, we say that $(\pi, V, K)$ is a
\emph{minimal Stinespring representation} for $\Phi$ (see
\cite{paulsen}). Our aim in this section is to prove an unbounded
version of Stinespring's Theorem for maps defined on
pre-C*-algebras, and apply it in the special case where the
C*-completion of the domain coincides with the C*-algebra of all
compact operators acting on a Hilbert space.

Let $\cl A$ be a $*$-algebra, $\cl X$ be a linear (not necessarily
closed) subspace of a Hilbert space $H$ and $\cl L(\cl X)$ be
the space of all linear mappings on $\cl X$. A linear mapping $\Phi
: \cl A\to\cl L(\cl X)$ will be called {\it completely positive} if
$$\sum_{k,l=1}^n(\Phi(a_k^*a_l)\xi_l,\xi_k)\geq 0$$
for arbitrary $n\in \bb{N}$, $a_1,\ldots,a_n\in \cl A$ and
$\xi_1,\ldots\xi_n\in\cl X$.

We denote by $M(\cl B)$ the multiplier algebra of a C*-algebra $\cl
B$. We recall that, if $\cl B$ is identified with a subalgebra of
its enveloping von Neumann algebra (that is, its second dual) $\cl
B^{**}$ then $M(\cl B)\cong \{x\in \cl B^{**} : x\cl B\subseteq \cl
B, \cl B x\subseteq \cl B\}$.

\begin{theorem}\label{th_b}
Let $\cl B$ be a C*-algebra and $\cl A\subseteq \cl B$ be a dense
*-subalgebra of the form $\cl A = \cup_{k=1}^{\infty} p_k \cl A p_k$,
where $(p_k)_{k\in \bb{N}}\subseteq M(\cl B)$ is an increasing
sequence of projections with $p_n\cl A \subseteq \cl A$ and $\cl A
p_n \subseteq \cl A$. Let $H$ be a Hilbert space, $(q_k)_{k\in
\bb{N}}$ be an increasing sequence of projections on $H$ with strong
limit $I$, and $\cl X = \cup_{k\in \bb{N}} q_k H$. Assume that $\Phi
: \cl A\to\cl L(\cl X)$ is a completely positive map such that
$\Phi(p_k ap_m) = q_k\Phi(a)q_m$, $k,m\in \bb{N}$, $a\in \cl A$. The
following are equivalent:

(i) \ the restriction $\Phi|_{p_k\cl A
p_k}$ is bounded for each $k\in \bb{N}$;

(ii) there exist a Hilbert space $K$, a bounded *-representation
$\pi : \cl A\rightarrow \cl B(K)$ and an operator $V : \cl X
\rightarrow K$, such that $V|_{q_k H}$ is bounded for every $k\in
\bb{N}$, and
$$(\Phi(a)\xi,\eta) = (\pi(a)V\xi,V\eta), \ \ \ a\in \cl A, \ \xi,\eta \in \cl X.$$

Moreover, the operator $V$ appearing in (ii) can be chosen to be closable.
\end{theorem}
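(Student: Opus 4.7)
The plan is to prove (i)~$\Rightarrow$~(ii) by a GNS-type construction on $\cl A \odot \cl X$, with the vector map $V$ supplied by approximate identities drawn from the C*-completions $\cl B_N := \overline{p_N \cl A p_N}$. The reverse direction (ii)~$\Rightarrow$~(i) is immediate: for $a \in p_k \cl A p_k$ one has $\Phi(a) = q_k \Phi(a) q_k$, hence $(\Phi(a)\xi,\eta) = (\pi(a) V q_k \xi, V q_k \eta)$, whence $\|\Phi(a)\|_{\op} \le \|a\|\,\|V|_{q_k H}\|^{2}$.

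For (i)~$\Rightarrow$~(ii), boundedness of each $\Phi|_{p_N \cl A p_N}$ allows a continuous extension to a bounded completely positive map $\tilde\Phi_N : \cl B_N \to \cl B(q_N H)$. Equip $K_0 := \cl A \odot \cl X$ with the sesquilinear form $\langle a \otimes \xi, b \otimes \eta\rangle = (\Phi(b^{*}a)\xi, \eta)$, positive semidefinite by complete positivity; let $\cl N$ be its null space and let $K$ be the completion of $K_0/\cl N$. Define $\pi : \cl A \to \cl B(K)$ by $\pi(a)[b \otimes \xi] = [ab \otimes \xi]$. For $a \in p_M \cl A p_M$ and $v = \sum_i b_i \otimes \xi_i$ with $b_i \in p_M \cl A p_M$, $\xi_i \in q_M H$, the bound $\|\pi(a)v\|^{2} \le \|a\|^{2}\|v\|^{2}$ comes from the matrix inequality $B^{*}(a^{*}a) B \le \|a\|^{2} B^{*}B$ in $M_n(p_M M(\cl B) p_M)$ (with $B = (b_1, \ldots, b_n)$), applied under $\tilde\Phi_M^{(n)}$; the usual checks then make $\pi$ a bounded *-homomorphism.

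For $V$, fix $\xi \in q_N H$ and an approximate identity $(e_n)_n \subseteq \cl B_N$, and set $V\xi := \lim_n [e_n \otimes \xi]$. The limit exists since $(e_n - e_m)^{2} \le e_n - e_m$ (valid because $0 \le e_m \le e_n \le 1$) gives
\[
\|[e_n \otimes \xi] - [e_m \otimes \xi]\|^{2} = (\tilde\Phi_N((e_n - e_m)^{2})\xi,\xi) \le ((\tilde\Phi_N(e_n) - \tilde\Phi_N(e_m))\xi,\xi),
\]
and the monotone net $\tilde\Phi_N(e_n)$ converges strongly to an operator $\Psi_N \in \cl B(q_N H)$. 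Independence from the choice of approximate identity is routine. The main obstacle I anticipate is \emph{independence from the level $N$}: for $\xi \in q_m H \subseteq q_N H$ one exploits the fact that $(p_m f_n p_m)_n$ is an approximate identity of $\cl B_m$ whenever $(f_n)_n$ is one of $\cl B_{N'}$ (with $m \le N'$), which lets one reduce $\|V_N\xi\|^{2}$, $\|V_{N'}\xi\|^{2}$, and $(V_N\xi, V_{N'}\xi)$ all to $(\Psi_m \xi, \xi)$, forcing $V_N\xi = V_{N'}\xi$ in $K$. With $V$ unambiguously defined, the identity $\pi(a) V\xi = [a \otimes \xi]$ (from $ae_n \to a$ in norm) yields the intertwining $(\pi(a)V\xi, V\eta) = (\Phi(a)\xi, \eta)$, and the estimate $\|V|_{q_k H}\|^{2} \le \|\tilde\Phi_k\|$ follows from $\|V\xi\|^{2} = (\Psi_k\xi, \xi)$.

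Finally, to see that $V$ is closable, suppose $\xi_n \in \cl X$ with $\xi_n \to 0$ in $H$ and $V\xi_n \to \eta$ in $K$. A short scaling argument applied to the $2 \times 2$ positivity inequality (with one argument an approximate identity and the other equal to $a$, then passing to the limit) establishes the bilinear adjointness $(\Phi(a^{*})\xi, \zeta) = (\xi, \Phi(a)\zeta)$ for all $a \in \cl A$ and $\xi, \zeta \in \cl X$. Hence, for any $a \in \cl A$ and $\zeta \in \cl X$,
\[
(\eta, \pi(a)V\zeta) = \lim_n (\pi(a^{*})V\xi_n, V\zeta) = \lim_n (\Phi(a^{*})\xi_n, \zeta) = \lim_n (\xi_n, \Phi(a)\zeta) = 0,
\]
using $\pi(a^{*}) V\xi_n = [a^{*} \otimes \xi_n]$ and the intertwining already established. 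Since $\pi(\cl A)V(\cl X)$ coincides with the generating set $\{[a \otimes \zeta]\}$ and thus has dense linear span in $K$, this forces $\eta = 0$.
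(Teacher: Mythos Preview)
Your argument is correct and reaches the same conclusion, but the mechanism differs from the paper's in one key respect: the construction of $V$.

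The paper passes to the second dual. It extends each $\Phi_k$ to a weak*-continuous completely positive map $\widetilde\Phi_k : \cl B_k^{**} \to \cl B(q_k H)$, which makes the projections $p_k$ (and implicitly the unit) available as arguments. One then adjoins formal symbols $1 \otimes \xi$ to $\cl A \odot \cl X$, defines the semi-inner product via $\widetilde\Phi_k(p_k a^{*} b\, p_k)$, and simply sets $V\xi = i(1 \otimes \xi)$. Boundedness of $V|_{q_k H}$ and closability are then read off directly from the covariance $\widetilde\Phi_m(p_k a p_k) = q_k \widetilde\Phi_m(a) q_k$, with no limiting procedure needed.

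Your route avoids the bidual by producing $V\xi$ as a Cauchy limit $\lim_n [e_n \otimes \xi]$ along an increasing approximate identity of $\cl B_N$. This is the classical GNS device for non-unital algebras and is sound here, but it costs you the extra consistency check (independence of the level $N$), and you should say explicitly that the symbols $[e_n \otimes \xi]$ with $e_n \in \cl B_N \setminus \cl A$ make sense only after extending the semi-inner product by continuity to $\bigcup_N \cl B_N \odot q_N H$ (equivalently, by approximating each $e_n$ from $p_N \cl A p_N$ and using boundedness of $\tilde\Phi_N$). Your closability argument, via the adjointness $(\Phi(a^{*})\xi,\zeta) = (\xi,\Phi(a)\zeta)$ obtained from the $2\times 2$ positivity with an approximate identity, is essentially the same computation the paper performs directly on the extended form; you have just packaged it through $\Phi$ rather than through $\widetilde\Phi_k$. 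Overall, the paper's approach is slightly more streamlined once the bidual machinery is in place; yours is more elementary but carries more bookkeeping about approximate identities and level-consistency.
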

\begin{proof}
(i)$\Rightarrow$(ii) Let $\cl A_k = p_k\cl A p_k$, $H_k = q_k H$,
and write $\Phi_k$ for the restriction of $\Phi$ to $\cl A_k$. Since
$\Phi_k$ is bounded, it can be extended to a bounded map from $\cl
B_k \stackrel{def}{=} \overline{\cl A_k}$ into $\cl B(H_k)$ (where
the closure is taken in the norm topology of $\cl B$), which we also
denote by $\Phi_k$. Let $\Phi_k^{**} : \cl B_k^{**} \rightarrow \cl
B(H_k)^{**}$ be the second dual of $\Phi_k$ and $\cl E_k : \cl
B(H_k)^{**} \rightarrow \cl B(H_k)$ be the canonical projection
(we point out that $\cl E_k$ is the dual of the inclusion of $\cl B(H_k)_*$ into
$\cl B(H_k)^{*}$ and is hence weak* continuous). We note that if
$k\leq m$ then $p_k\in M(\cl B_m)\subseteq \cl B_m^{**}$. Set
$\widetilde{\Phi}_k = \cl E_k\circ \Phi_k^{**}$; thus,
$\widetilde{\Phi}_k$ is a weak* continuous completely positive map
from $\cl B_k^{**}$ into $\cl B(H_k)$ extending $\Phi_k$. We note
that if $k \leq m$ then $\widetilde{\Phi}_m|_{\cl B_k^{**}} =
\widetilde{\Phi}_k$. By weak* continuity, $\widetilde{\Phi}_k (p_l x
p_m) = q_l \widetilde{\Phi}_k(x) q_m$ whenever $l,m \leq k$ and
$x\in \cl B_k^{**}$.

We now modify the well-known Stinespring construction. Let $\cl L$
be the linear space generated by $\cl A\otimes\cl X$ and
all vectors of the form $1\otimes\xi$, $\xi\in\cl X$. We define a
sesquilinear form $\langle\cdot,\cdot\rangle_1$ on $\cl L$: if
$\zeta = \sum_{i=1}^s a_i\otimes\xi_i$ and $\theta = \sum_{j=1}^t
b_j\otimes\eta_j$, set
$$\langle\theta,\zeta\rangle_1 = \sum_{i=1}^s \sum_{j=1}^{t}(\widetilde{\Phi}_k(p_ka_i^*b_jp_k)\eta_j,\xi_i),$$
where $k$ is such that $\xi_i$, $\eta_j\in H_k$, for all
$i=1,\ldots, s$ and all $j = 1,\dots,t$. We note that $\langle\cdot,\cdot\rangle_1$ is
well-defined; indeed, if $\xi_i$ and $\eta_j\in H_m$ for some $m$,
assuming that $m \leq k$, we have
\begin{eqnarray*}
(\widetilde{\Phi}_k(p_ka_i^*b_jp_k)\eta_j,\xi_i)&=&(q_m\widetilde{\Phi}_k(p_ka_i^*b_jp_k)q_m\eta_j,\xi_i)\\
&=&
(\widetilde{\Phi}_k(p_kp_ma_i^*b_jp_mp_k)\eta_j,\xi_i) = (\widetilde{\Phi}_k(p_ma_i^*b_jp_m)\eta_j,\xi_i)\\
&=& (\widetilde{\Phi}_m(p_ma_i^*b_jp_m)\eta_j,\xi_i).
\end{eqnarray*}
Since $\Phi$ is completely positive, $\langle\cdot,\cdot\rangle_1$ is a semidefinite
inner product on $\cl L$.

Let $N := \{ \zeta\in\cl L:\langle\zeta,\zeta\rangle_1=0\}$ and let
$$\rho(a)\left(\sum_{k=1}^na_k\otimes\nph_k\right) = \sum_{k=1}^n aa_k\otimes\nph_k, \ a\in \cl A;$$
$\rho$ is a well-defined $*$-homomorphism. A standard application of
the Cauchy-Schwartz inequality shows that $N$ is invariant under
$\rho(\cl A)$. Let $i:\cl L\to\cl L/N$ be the quotient map. Then
$(i(\xi),i(\eta))\stackrel{def}{=}\langle \xi,\eta\rangle_1$ defines
a scalar product on $\cl L/N$ and $\pi(a)i(\eta)=i(\rho(a)\eta)$ is
a well-defined $*$-representation  of $\cl A$ on $\cl L/N$. Let $\cl
H$ be the Hilbert space completion of $\cl L/N$.

We claim that $\pi$ is bounded. In fact, for $\zeta =
\sum_{i=1}^na_i\otimes \xi_i\in \cl L$, there exists $k$ such that
$\xi_i\in H_k$ and if $a_i\ne 1$, $a_i\in \cl A_k$, for all
$i=1,\ldots, n$. Since $\widetilde{\Phi}_k$ is completely
positive, we obtain, for $a\in \cl A_k$,
\begin{eqnarray*}
\|\pi(a)i(\zeta)\|^2&=&\langle\rho(a)\zeta,\rho(a)\zeta\rangle_1=
\left\langle\sum_{i=1}^naa_i\otimes\xi_i,\sum_{i=1}^naa_i\otimes\xi_i\right\rangle_1\\
& = & \sum_{i,j=1}^n(\widetilde{\Phi}_k(p_ka_i^*a^*aa_jp_k)\xi_i,\xi_j)\\
& \leq & \|a^*a\|\sum_{i,j=1}^n(\widetilde{\Phi}_k(p_ka_j^*a_ip_k)\xi_i,\xi_j)
\\
& = &
\|a\|^2\left\langle\sum_{i=1}^na_i\otimes\xi_i,\sum_{i=1}^na_i\otimes\xi_i\right\rangle_1
= \|a\|^2\|i(\zeta)\|^2,
\end{eqnarray*}
giving the statement.

Define $V : \cl X\to \cl H$ by $V\xi = i(1\otimes\xi)$. If
$\xi,\eta\in H_k$ then
$$|(V\xi,\eta)| = |(\widetilde{\Phi}_k(p_k) \xi, \eta)| \leq \|\Phi_k\|\|\xi\|\|\eta\|,$$
showing that $V|_{H_k}$ is bounded for each $k$. Moreover,
\begin{eqnarray}\label{pres}
(\Phi(a)\zeta,\theta) =
\langle\rho(a)(1\otimes\zeta),1\otimes\theta\rangle_1=(\pi(a)i(1\otimes\zeta),i(1\otimes\theta))
= (\pi(a) V\zeta, V\theta).\nonumber
\end{eqnarray}

We show that $V$ is closable. If $\eta_n\to 0$ and
$i(1\otimes\eta_n)\to  f$ then, for each $a\otimes\xi\in\cl L$ with $a\in \cl A\cup\{1\}$, $\xi\in \cup_{k=1}^{\infty} H_k$,
we have that
$$(i(a\otimes\xi),i(1\otimes\eta_n))\to (i(a\otimes\xi),f).$$ On the other hand,
if $\xi\in H_k$ and $\eta_n \in H_{l_n}$, $l_n \geq k$, then
\begin{eqnarray*}
(i(a\otimes\xi),i(1\otimes\eta_n)) & = &
(\widetilde{\Phi}_{l_n}(p_{l_n}a p_{l_n})\xi,\eta_n) =
(\widetilde{\Phi}_{l_n}(p_k a p_k)\xi,\eta_n)\\
& = & (\widetilde{\Phi}_{k}(p_k a p_k)q_k\xi,q_k\eta_n) \rightarrow_{n\rightarrow \infty}
0.
\end{eqnarray*}
It follows that $(i(a\otimes\xi),f) = 0$, for all $a\otimes\xi \in
\cl L$. As $i(\cl L)$ is dense in $\cl H$, we conclude that $f=0$.

(ii)$\Rightarrow$(i) is trivial.
\end{proof}

\begin{theorem}\label{c_c2}
Let $H$ be a Hilbert space, $(p_n)_{n=1}^\infty$ be an increasing sequence of projections on $H$
such that $\vee_{n=1}^\infty p_n=I$ and  let $H_n=p_nH$, $n\in \bb{N}$. Let $\cl A
:= \cup_{n=1}^\infty \cl C_2(H_n)$ and  $\cl X=\cup_{n=1}^\infty
H_n$. Assume that $\Phi : \cl A\to\cl L(\cl X)$ is a completely
positive map such that $\Phi(p_n x p_m)=p_n\Phi(x)p_m$ for all $n,
m\in \bb{N}$, and $\Phi_n=\Phi|_{\cl C_2(H_n)}$ is bounded with
respect to the operator norm on $\cl B(H_n)$.

(i) There exists a family $(V_i)_{i=1}^\infty$ of closable linear
maps from $\cl X$ into $H$ such that
\begin{equation}\label{eq_su}
(\Phi(a)\xi,\eta)=\sum_{i=1}^\infty(aV_i\xi,V_i\eta), \ \ a\in \cl
A, \ \ \xi,\eta\in \cl X.
\end{equation}

(ii) If $\cl D\subseteq \cl B(H)$ is a unital $C^*$-subalgebra,
$(p_n)_{n=1}^\infty\subseteq\cl D$ and $\Phi$ is $\cl D$-bimodular,
then $V_i$, $i\in{\mathbb N}$, can be chosen to be closable
operators affiliated with $\cl D'$.
\end{theorem}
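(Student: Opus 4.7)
The plan is to apply Theorem~\ref{th_b} with $\cl B = \cl K(H)$, $\cl A = \cup_{n=1}^\infty \cl C_2(H_n)$ and $q_n = p_n$. The hypotheses are directly verified: $\cl A$ is operator-norm dense in $\cl K(H)$, $(p_n) \subseteq M(\cl K(H)) = \cl B(H)$, $p_k \cl A p_k = \cl C_2(H_k)$, and the boundedness assumption on each $\Phi_n$ matches condition~(i) of Theorem~\ref{th_b}. Theorem~\ref{th_b} then produces a Hilbert space $K$, a bounded $*$-representation $\pi : \cl A \to \cl B(K)$ and a closable map $V : \cl X \to K$ with $V|_{H_n}$ bounded and $(\Phi(a)\xi,\eta) = (\pi(a)V\xi,V\eta)$. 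Since $\cl A$ is norm-dense in $\cl K(H)$, $\pi$ extends to a $*$-representation of $\cl K(H)$ on $K$; norm-approximating $V\xi = i(1\otimes\xi)$ by $i(a_m\otimes\xi) \in \pi(\cl A)V\cl X$ (with $a_m \in \cl C_2(H_n)$ converging weak* to $p_n$ in $\cl B(H_n)$, using weak* continuity of $\widetilde\Phi_n$) shows $\pi$ is non-degenerate and minimal. By the classification of non-degenerate representations of $\cl K(H)$, identify $K = H \otimes \cl H_0$ with $\pi(a) = a \otimes I_{\cl H_0}$; decomposing $V\xi = \sum_i V_i\xi \otimes e_i$ in any ONB $\{e_i\}$ of $\cl H_0$ gives (\ref{eq_su}).

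The delicate issue is arranging that each $V_i$ is closable. For $h, v_0 \in \cl X$ the rank-one operator $\theta_{h,v_0}:\eta \mapsto (\eta, v_0)_H h$ lies in $\cl A$, and $\pi(\theta_{h,v_0})V\xi = h \otimes \Psi_{v_0}(\xi)$, where $\Psi_{v_0}(\xi) = \sum_j (V_j\xi, v_0)_H e_j \in \cl H_0$. Minimality forces $\bigcup_{v_0 \in \cl X}\Psi_{v_0}(\cl X)$ to be total in $\cl H_0$: a vector $w = \sum_j w_j e_j$ orthogonal to this set would yield the relation $\sum_j \bar w_j V_j = 0$ on $\cl X$, hence $\pi(\cl A)V\cl X \perp H \otimes \bb{C}w$, contradicting minimality unless $w=0$. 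Apply Gram--Schmidt to a countable total subfamily, indexed by dense sequences $(v_0^{(k)}), (\xi^{(l)})$ in $\cl X$, to produce an ONB $\{e_i\}$ of $\cl H_0$ in which each $e_i$ is a finite linear combination of $\Psi_{v_0^{(k)}}(\xi^{(l)})$'s. Combined with the fact that $\pi(\cl A)V\cl X \subseteq \dom(V^*)$ (built into the construction in Theorem~\ref{th_b}), this shows that for every $h \in \cl X$ the vector $h \otimes e_i$ is a finite linear combination of elements $i(\theta_{h, v_0^{(k)}} \otimes \xi^{(l)}) \in \dom(V^*)$. Since $h \in \dom(V_i^*)$ is equivalent to $h \otimes e_i \in \dom(V^*)$, we conclude $\cl X \subseteq \dom(V_i^*)$, so $V_i$ is closable.

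For part~(ii), if $\Phi$ is $\cl D$-bimodular, the right module identity $\Phi(ad) = \Phi(a)d$ — applied on the truncated level where both $a$ and $ad$ lie in $\cl A$, using $p_n \in \cl D$ — combined with $V^*\pi(b)V = \Phi(b)$ shows, for all $b \in \cl A$ and $\eta \in \cl X$, that $(b \otimes I)[V(d\xi) - (d \otimes I_{\cl H_0})V\xi] \perp V\eta$. Hence this difference is orthogonal to the dense set $\pi(\cl A)V\cl X$ and vanishes, giving $V_i(d\xi) = d V_i\xi$ on the natural subdomain. Passing to the closure $\overline{V_i}$, using the approximation $p_n d p_n \to d$ strongly, and invoking the polar decomposition gives $\overline{V_i} \in \Aff\cl D'$.

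The principal obstacle is the closability step in part~(i): a naive choice of ONB for $\cl H_0$ does not produce closable components, and one genuinely needs the totality of $\bigcup_{v_0 \in \cl X}\Psi_{v_0}(\cl X)$, the Gram--Schmidt construction, and the explicit description of $\dom(V^*)$ from Theorem~\ref{th_b}. A secondary subtlety in~(ii) is that $\cl D$ may not literally preserve the algebraic domain $\cl X$, so the commutation $V_i d = d V_i$ must first be verified through the truncations $p_n d p_n$ and only then promoted to $\cl D'$-affiliation via the polar decomposition of $\overline{V_i}$.
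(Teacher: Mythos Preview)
Your proof is correct and follows the same strategy as the paper: apply Theorem~\ref{th_b} with $\cl B=\cl K(H)$, identify the resulting representation of $\cl K(H)$ with an ampliation $a\mapsto a\otimes 1$, and read off the $V_i$ as the coordinates of $V$. The one place where you go beyond the paper is the closability of the individual $V_i$. The paper simply asserts that this ``follows easily from the closability of $V$'', while you correctly point out that composing a closable operator with a coordinate projection need not yield a closable operator for an arbitrary choice of orthonormal basis of the multiplicity space, and you supply an honest argument: use minimality to see that the vectors $\Psi_{v_0}(\xi)$ are total in $\cl H_0$, Gram--Schmidt them to obtain a basis in which each $e_i$ is a \emph{finite} combination of such vectors, so that $h\otimes e_i\in \pi(\cl A)V\cl X\subseteq\dom(V^*)$ for every $h\in\cl X$. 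This is a legitimate way to fill in a step the paper leaves implicit.

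For part~(ii) the two arguments are essentially the same. The paper introduces the projection $e$ onto $\overline{\pi(\cl A)V\cl X}$ and works with $eV$; since you have already established minimality, $e=I$ and your direct argument that $V(d'\xi)=(d'\otimes I)V\xi$ for $d'=p_ndp_n$ is just the paper's argument with this simplification. The passage from the truncations $p_ndp_n$ to arbitrary $d\in\cl D$ (and then to $\cl D''$) via the closedness of $\overline{V_i}$ is carried out in the same spirit in both proofs.
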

\begin{proof}
In the notation of Theorem \ref{th_b}, $\cl B = \cl K(H)$, and hence the representation
$\pi$ arising in Theorem \ref{th_b} is unitarily equivalent to an
ampliation of the identity representation. At the expense of
changing the operator $V$, we may thus assume that $\pi(a) =
a\otimes 1$, $a\in \cup_{k=1}^{\infty} \cl C_2(H_k)$. Let
$$V=(V_1,\ldots,V_n,\ldots)^t$$ be the corresponding matrix of $V$;
identity (\ref{eq_su}) follows now trivially from Theorem
\ref{th_b}. The fact that the operators $V_i$ are closable follows
easily from the closability of $V$.

We show that we can choose $V_i$  to be  affiliated with $\cl D'$
when $\Phi$ satisfies condition (ii). The arguments that follow are
similar to the ones in
\mbox{\cite[Corollary~5.9]{christensen-sinclair}.} The modularity of
$\Phi$ gives
$$((a\otimes 1)V\xi,(r\otimes 1)V\eta)=((a\otimes 1)V\xi,Vr\eta)
\quad\text{for all } r\in \cl D, a\in \cl A, \xi,\eta\in \cl X.$$
Let $e_a$ be the
projection onto $\overline{(a\otimes 1)V\cl X}$, $a\in \cl A$. Then
$e_a(r\otimes 1)V\eta=e_aVr\eta$, $r\in \cl D$, $\eta\in\cl X$. If
$e = \vee_{a\in \cl A} e_a$ then
$$e(r\otimes 1)V\eta=eVr\eta,\quad r\in \cl D, \eta\in\cl X.$$

As $(r\otimes 1)(a\otimes 1)V\eta=(ra\otimes 1)V\eta$ and $ra\in \cl
A$ ($r\in \cl D$, $a\in \cl A$), we obtain that $e(r\otimes 1)e=(r\otimes 1)e$ for each $r\in \cl D$ and hence $e$ commutes with
$r\otimes 1$, $r\in \cl D$.
This implies
$$(r\otimes 1)(eV)\eta=(eV)r\eta,\quad \eta\in\cl X, \ r\in \cl D,$$
and
$$(\Phi(a)\xi,\eta)=((a\otimes 1)eV\xi,eV\eta), \quad \xi, \eta\in\cl X.$$
The only thing that is left to prove is that if
$eV=(V_1',\ldots,V_n',\ldots)^t$ then $V_i'$ is affiliated with $\cl
D'$, for every $i$.

We have $rV_i'\eta=V_i'r\eta$, for all $r\in \cl D$ and  $\eta\in\cl
X$. The operator $V_i'$ is closable as an operator defined on $\cl
X$; let us denote its closure again by $V_i'$. Then clearly $r\cl
\dom(V_i')\subseteq\dom(V_i')$ and $rV_i'=V_i'r$. As $V_i'$ are
closable, the equality holds for each $r$ in the strong closure of
${\cl D}$. Since $\vee_{n=1}^\infty p_n=I$ and $p_n\in\cl D$, the
C*-algebra $\cl D$ is non-degenerate and hence $rV_i'=V_i'r$ for all
$r\in \cl D''$. Thus, $V_i'$ is affiliated
with $\cl D'$.
\end{proof}

\section{Lifting for Stinespring's representations}\label{s_rtst}

Our aim in this section is to obtain a lifting for Stinepring's
representations of maps defined on the algebra of compact operators.
The result will be used in Section \ref{s_plsm} to obtain a lifting
for positive Schur multipliers, but we believe that it is
interesting in its own right as well. It also provides an
independent route to Theorem \ref{c_c2}.

Let $H$ be a Hilbert space and
$\Phi : \cl K(H)\to \cl B(H)$ be a completely positive map.
Then there exists a Stinespring representation
\begin{equation}\label{eq_sp}
\Phi(x)=V^*(x\otimes 1_\alpha)V=\sum_{i=1}^\alpha a_i^*xa_i, \ \ x\in \cl K(H),
\end{equation}
where $1\leq \alpha\leq \infty$ and $V =(a_1,a_2,\dots)^t\in
M_{\alpha,1}(\cl B(H))$. We will say in this case that $V$
\emph{implements} the representation (\ref{eq_sp}). If $\Phi$ is
moreover modular over a non-degenerate C*-algebra $\cl A\subseteq
\cl B(H)$, then the entries of $V$ can be chosen from $\cl D'$.

We start by characterising the minimal representations of the map
$\Phi$ in terms of the operator $V$ (Lemma \ref{equiv}). Note that
given any element $(\lambda_i)_{i=1}^{\alpha}\in \ell^2_{\alpha}$, the series
$\sum_{i=1}^{\alpha}\lambda_i a_i$ is norm convergent in
$\mathcal{B}(H)$.  Following S. D. Allen, A. M. Sinclair and R. R.
Smith \cite{ass1993}, we say that the set $\{a_i\}_{i=1}^{\alpha}$
is \emph{strongly independent} if $\sum_{i=1}^{\alpha}\lambda_i a_i
= 0$ implies that $\lambda_i=0$, for all $i$. It was established
in \cite[Lemma 2.2]{ass1993} that, for the case $\alpha = \infty$, the family
$\{a_i\}_{i=1}^{\alpha}$ is strongly independent if and only if the set
$\mathcal{K}=\{(\omega(a_1),\omega(a_2),\dots):\omega\in\mathcal{B}(H)^*\}$
is norm dense in $\ell^2_{\alpha}$.

\begin{lemma}\label{equiv}
Let $V = (a_i)_{i=1}^{\alpha} \in M_{\alpha,1}(\cl B(H))$ implement a representation of the
completely positive map $\Phi : \mathcal{K}(H)\rightarrow \mathcal{B}(H)$,
where $\alpha$ is an at most countable cardinal. The following are equivalent:

(i) \ \ The operator $V$ implements a minimal representation of $\Phi$;

(ii) \ The set
\begin{eqnarray*}\mathcal{F} & = & \left\{(\omega(a_1), \omega(a_2),\dots): \omega \text{ is a
vector functional on } \mathcal{B}(H)\right\}
\end{eqnarray*}
has norm dense linear span in $\ell^2_{\alpha}$;

(iii) The set $\mathcal{J}=\{(\omega(a_1),\omega(a_2),\dots):
\omega\in\mathcal{B}(H)_*\}$ is norm dense in $\ell^2_{\alpha}$;

(iv) \ The set $\{a_i\}_{i=1}^{\alpha}$ is strongly independent.
\end{lemma}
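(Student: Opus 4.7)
The plan is to prove the cyclic chain (i)$\Leftrightarrow$(ii), (ii)$\Leftrightarrow$(iii), (iii)$\Leftrightarrow$(iv) by identifying each condition with a statement about a natural subspace of $\ell^2_\alpha$. Throughout I use the identification $H^\alpha=H\otimes\ell^2_\alpha$ and the fact that, up to unitary equivalence, the $*$-representation $\pi$ in the Stinespring decomposition may be taken to be $\pi(x)=x\otimes 1_\alpha$, so that $V\xi=(a_1\xi,a_2\xi,\ldots)^t$.

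For (i)$\Leftrightarrow$(ii), I would use the rank-one operators $x=\eta\otimes\eta'\in\cl K(H)$ (so $x\zeta=(\zeta,\eta')\eta$), which give
\[
\pi(x)V\xi \;=\; \bigl((a_i\xi,\eta')\eta\bigr)_i \;=\; \eta\otimes\bigl((a_i\xi,\eta')\bigr)_i.
\]
Because $\cl K(H)$ is the norm-closed linear span of its rank-one operators, this identifies $\overline{[\pi(\cl K(H))VH]}$ with $H\otimes\overline{[\cl F]}$. Minimality of the representation, i.e.\ that this closed span equals $H^\alpha$, is therefore equivalent to $\overline{[\cl F]}=\ell^2_\alpha$.

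For (ii)$\Leftrightarrow$(iii), the inclusion $\cl F\subseteq\cl J$ immediately gives $\overline{[\cl F]}\subseteq\overline{\cl J}$, so it suffices to show $[\cl F]^\perp\subseteq\cl J^\perp$ in $\ell^2_\alpha$. If $c=(c_i)\in[\cl F]^\perp$, then $\sum_i\overline{c_i}(a_i\xi,\eta)=0$ for all $\xi,\eta\in H$, i.e.\ $(Y\xi,\eta)=0$ where $Y:=\sum_i\overline{c_i}a_i$. The boundedness of the column $V$ yields $\|a_i\|\leq\|V\|$ for all $i$ and, combined with $c\in\ell^2_\alpha$, makes the series defining $Y$ converge in operator norm to a bounded operator; thus $Y=0$. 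For any $\omega\in\cl B(H)_*$, norm-continuity of $\omega$ then gives $\sum_i\overline{c_i}\omega(a_i)=\omega(Y)=0$, so $c\in\cl J^\perp$.

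For (iii)$\Leftrightarrow$(iv), conjugating coordinates identifies $\cl J^\perp$ with the set $\{(\lambda_i)\in\ell^2_\alpha:\omega(\sum_i\lambda_ia_i)=0\text{ for every }\omega\in\cl B(H)_*\}$, which, since normal functionals separate points of $\cl B(H)$, coincides with $\{(\lambda_i)\in\ell^2_\alpha:\sum_i\lambda_ia_i=0\}$. Strong independence of $\{a_i\}$ is by definition the triviality of this latter set, hence equivalent to $\overline{\cl J}=\ell^2_\alpha$; for $\alpha=\infty$ this recovers the Allen--Sinclair--Smith criterion recalled just before the lemma, and for finite $\alpha$ it reduces to ordinary linear independence inside a finite-dimensional $\ell^2_\alpha$. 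The main technical point in the whole argument is the norm convergence of $\sum_i\overline{c_i}a_i$ in step two: it rests on the boundedness of $V$ and is precisely what permits the exchange with an arbitrary normal $\omega$.
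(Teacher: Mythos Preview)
Your proof is correct and follows essentially the same approach as the paper's: the equivalences (ii)$\Leftrightarrow$(iii) and (iii)$\Leftrightarrow$(iv) are argued in the same way, and for (i)$\Leftrightarrow$(ii) your direct identification $\overline{[\pi(\cl K(H))VH]}=H\otimes\overline{[\cl F]}$ via rank-one operators is just the positive form of the paper's contrapositive argument (where the projection onto this subspace is shown to have the form $I_H\otimes Q_0$). One small remark: the norm convergence of $\sum_i\overline{c_i}a_i$ follows from the boundedness of the column $V$ together with $c\in\ell^2_\alpha$ (via $\|\sum_{i=n}^m \overline{c_i}a_i\|\leq(\sum_{i=n}^m|c_i|^2)^{1/2}\|V\|$), not from the weaker consequence $\|a_i\|\leq\|V\|$ alone, so you may want to rephrase that sentence slightly.
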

\begin{proof}
(i)$\Rightarrow$(ii) Suppose that the
linear span of $\mathcal{F}$ is not dense and let
$0 \neq (\lambda_i)_{i=1}^{\alpha}\in\mathcal{F}^{\perp}$.
Then
\begin{eqnarray*}
0=\sum_{i=1}^{\alpha}\overline{\lambda_{i}} (a_i\xi,
x^*\eta)\ =\sum_{i=1}^{\alpha} (x a_i\xi,
\lambda_{i}\eta),\ \ \ \ \xi,\eta\in H, \ x\in
\mathcal{K}(H).
\end{eqnarray*} Thus, if $\eta$ is a non-zero vector
in $H$, then $(\lambda_i\eta)_{i=1}^{\alpha}\in H^{\alpha}$ is
non-zero and orthogonal to $(x a_i\xi)_{i=1}^{\alpha}$ for all $\xi$
in $H$ and all $x$ in $\mathcal{K}(H)$.  As a result, the
representation implemented by $V$ cannot be minimal.

(ii)$\Rightarrow$(i) Suppose that the representation is not minimal.
Then $\mathcal{E}=\overline{(\mathcal{K}(H)\otimes 1)VH} \neq
H^{\alpha}$. Let $Q$ is the projection from $H^{\alpha}$ onto
$\mathcal{E}$; since $\cl E$ is invariant under
$\mathcal{K}(H)\otimes 1$, we have that $Q\in(\mathcal{K}(H)\otimes
1)'$ and hence $Q = I_H\otimes Q_0$, where $Q_0$ is a projection in
$\mathcal{B}(\ell^2_{\alpha})$.  Thus we have that
$\mathcal{E}^{\perp} = H\otimes (Q_0^{\perp}\ell^2_{\alpha})$.
Choose a non-zero $(\lambda_i)_{i=1}^{\alpha}\in
Q_0^{\perp}\ell^2_{\alpha}$. Then for every $\eta\in H$ we have
$\eta\otimes (\lambda_i)_{i=1}^{\alpha}\in\mathcal{E}^{\perp}$ and
hence
\[0=\sum_{i=1}^{\alpha}
(x^*a_i\xi,\lambda_i\eta) = \sum_{i=1}^{\alpha}\overline{\lambda_i}
(a_i\xi,x\eta) =
\left(\sum_{i=1}^\alpha\overline{\lambda_i}a_i\xi,x\eta\right),\] for all
$x\in \mathcal{K}(H)$ and all $\xi\in H$. It follows that
$\sum_{i=1}^{\alpha}\overline{\lambda_i} (a_i\xi,\eta) = 0$ for
every $\eta\in H$. Hence $(\overline{\lambda_i})_{i=1}^{\alpha}$ is
orthogonal to $((a_i\xi,\eta))_{i=1}^{\alpha}$, for all $\xi,\eta\in
H$. It follows that the linear span of $\mathcal{F}$ is not dense in
$\ell^2_{\alpha}$.

(ii)$\Rightarrow$ (iii) is trivial.

(iii)$\Rightarrow$(ii) follows from the inclusion
$\mathcal{J}\subseteq\overline{[\mathcal{F}]}^{\|\cdot\|}$ whose
verification is straightforward.

(iii)$\Leftrightarrow$(iv)
The set $\mathcal{J}$ is not dense in $\ell^2_{\alpha}$ if and only if
there exists a non-zero element $(\lambda_i)_{i=1}^{\alpha}\in
\ell^2_{\alpha}$ lying in the orthogonal complement of $\mathcal{J}$;
that is, such that
\[\omega\left(\sum_{i=1}^{\alpha}\lambda_ia_i\right)=\sum_{i=1}^{\alpha}\lambda_{i}\omega(a_{i})=0,
\ \  \  \omega\in\mathcal{B}(H)_*.\] This is equivalent to the
existence of a non-zero $(\lambda_i)_{i=1}^{\alpha}\in
\ell^2_{\alpha}$ such that $\sum_{i=1}^{\alpha}\lambda_i a_i = 0$,
that is, to the $\{a_i\}_{i=1}^{\alpha}$ not being strongly
independent.
\end{proof}

\begin{lemma}\label{l_minimalscalar}
Let $H$ be a separable Hilbert space,
$\Psi:\mathcal{K}(H)\rightarrow\mathcal{B}(H)$ be a completely
positive map, and suppose that $A\in M_{\infty,1}(\cl B(H))$ implements
a representation of $\Psi$. Then there exists $\Lambda\in \cl
B(\ell^2,\ell^2_{\alpha})$, such that the operator
$(I_H\otimes\Lambda)A$ implements a minimal representation of
$\Psi$.
\end{lemma}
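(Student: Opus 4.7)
The plan is to compress the representation implemented by $A$ onto its cyclic part by replacing the multiplicity index $\ell^2$ with a smaller space via a coisometry. By Lemma~\ref{equiv}, $A$ implements a minimal representation of $\Psi$ if and only if the cyclic subspace $\cl E := \overline{(\cl K(H) \otimes 1) A H}$ of $H \otimes \ell^2$ equals the whole space. In general $\cl E$ is invariant under $\cl K(H) \otimes 1$, so the projection onto $\cl E$ lies in the commutant $(\cl K(H) \otimes 1)' = \bb{C} I_H \,\bar\otimes\, \cl B(\ell^2)$ (using the irreducibility of $\cl K(H)$ on $H$ together with the tensor commutation theorem), and therefore has the form $I_H \otimes P_0$ for some projection $P_0 \in \cl B(\ell^2)$. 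Setting $\alpha := \dim P_0\ell^2$ and choosing a partial isometry $\Lambda \in \cl B(\ell^2, \ell^2_\alpha)$ with $\Lambda^*\Lambda = P_0$ and $\Lambda\Lambda^* = I_{\ell^2_\alpha}$, the candidate is $A' := (I_H \otimes \Lambda) A$.

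To verify that $A'$ still implements $\Psi$, I would first establish the containment $AH \subseteq \cl E$: take an increasing sequence $(e_n) \subseteq \cl K(H)$ of finite-rank projections with $e_n \to I_H$ strongly, and apply dominated convergence in $H \otimes \ell^2$ to conclude that $(e_n \otimes 1) A\xi \to A\xi$ for every $\xi \in H$. Consequently $(I_H \otimes P_0)A = A$, and the desired identity follows:
\[
(A')^*(x \otimes 1_\alpha) A' \;=\; A^*(x \otimes \Lambda^*\Lambda) A \;=\; A^*(x \otimes P_0) A \;=\; A^*(x \otimes 1) A \;=\; \Psi(x),
\]
where the penultimate equality uses $(I_H \otimes P_0) A = A$ to absorb $P_0$ into $A$ on either side of $x \otimes 1$.

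Minimality of $A'$ is then immediate: since $I_H \otimes \Lambda$ commutes with $\cl K(H) \otimes 1$,
\[
\overline{(\cl K(H) \otimes 1) A' H} \;=\; \overline{(I_H \otimes \Lambda)(\cl K(H) \otimes 1) A H} \;=\; (I_H \otimes \Lambda)\cl E \;=\; H \otimes \ell^2_\alpha,
\]
where the last step uses that $\Lambda$ restricts to a surjective isometry from $P_0 \ell^2$ onto $\ell^2_\alpha$, together with the identification $\cl E = H \otimes P_0\ell^2$. The only step requiring genuine care is the verification that $AH \subseteq \cl E$; after that, the whole proof reduces to a short diagram chase using $\Lambda^*\Lambda = P_0$, so I do not expect any serious obstacle.
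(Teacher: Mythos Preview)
Your argument is correct and follows the same overall strategy as the paper: project onto the cyclic subspace $\cl E=\overline{(\cl K(H)\otimes 1)AH}$, observe that the projection has the form $I_H\otimes P_0$, and then pass to $H\otimes\ell^2_\alpha$ via an operator of the form $I_H\otimes\Lambda$. The difference lies in how $\Lambda$ is produced. The paper restricts the ampliation $x\mapsto x\otimes 1$ to $\cl E$, invokes the classification of representations of $\cl K(H)$ to obtain a unitary $S:\cl E\to H^{\alpha}$, sets $T=SQ$, and only then deduces that $T=I_H\otimes\Lambda$ from the intertwining relation $T(x\otimes 1)=(x\otimes 1_\alpha)T$. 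You bypass this detour entirely: once you know the projection onto $\cl E$ is $I_H\otimes P_0$, you simply pick any coisometry $\Lambda:\ell^2\to\ell^2_\alpha$ with initial projection $P_0$, which exists for purely dimension-theoretic reasons. This is shorter and more elementary. You also make explicit the containment $AH\subseteq\cl E$ via an approximate identity, a point the paper uses silently in the line $A^*(x\otimes 1)A=A^*\rho(x)A$. Note finally that your $\Lambda$ automatically satisfies $\Lambda^*\Lambda=P_0$, which is exactly the content of the paper's Remark~\ref{T^*T} and is needed later in Theorem~\ref{p_compl2}.
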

\begin{proof}
Let $\mathcal{E}=\overline{(\mathcal{K}(H)\otimes 1)AH}$.  As in the
proof of Lemma \ref{equiv}, the projection $Q$ from $H^{\infty}$
onto $\mathcal{E}$ is of the form $I_H\otimes Q_0$, where $Q_0$ is a
projection in $\mathcal{B}(\ell^2)$. Since $\mathcal{E}$ is reducing
for $\mathcal{K}(H)\otimes 1$, the map
$\rho:\mathcal{K}(H)\rightarrow\mathcal{B}(\mathcal{E})$ (resp.
$\rho' : \mathcal{K}(H)\rightarrow\mathcal{B}(\mathcal{E}^{\perp})$)
given by $\rho(x)=x\otimes 1|_{\mathcal{E}}$ (resp.
$\rho^{\prime}(x)=x\otimes 1|_{\mathcal{E}^{\perp}}$) is a
*-representation of $\mathcal{K}(H)$. Thus, there exists an (at most
countable) cardinal $\alpha$ and a unitary operator
$S:\mathcal{E}\rightarrow H^{\alpha}$ such that
$\rho(x)=S^*(x\otimes 1_{\alpha})S$.  Consider the operator
$T:H^{\infty}\rightarrow H^{\alpha}$ given by $T\xi = SQ\xi$, $\xi
\in H^{\infty}$. Then
\[T(x\otimes 1)T^*=\left(
                     \begin{array}{cc}
                       S & 0 \\
                     \end{array}
                   \right)\left(
                            \begin{array}{cc}
                              \rho(x) & 0 \\
                              0 & \rho^{\prime}(x) \\
                            \end{array}
                          \right)\left(
                                   \begin{array}{c}
                                     S^* \\
                                     0 \\
                                   \end{array}
                                 \right)=S\rho(x)S^*=x\otimes1_{\alpha}.\]
In addition,
\[T^*T=\left(\begin{array}{c}S^*
\\0
\\\end{array}\right)
                                   \left(\begin{array}{cc}S & 0 \\\end{array}\right)
                                   =\left(\begin{array}{cc}S^*S&0\\0&0\\ \end{array}\right)
                                   =\left(\begin{array}{cc}I_{\cl E}&0\\0&0\\ \end{array}\right)
                                   =I_H\otimes Q_0.\]
                                 Thus, $T(x\otimes 1)(I_H\otimes
                                 Q_0)=(x\otimes 1_{\alpha})T$.
Also,
                                 \[T(x\otimes 1)(I_H\otimes
                                 Q_0^{\perp})=\left(
                                                \begin{array}{cc}
                                                  S & 0 \\
                                                \end{array}
                                              \right)\left(
                                                       \begin{array}{cc}
                                                         \rho(x) & 0 \\
                                                         0 & \rho^{\prime}(x) \\
                                                       \end{array}
                                                     \right)\left(
                                                              \begin{array}{cc}
                                                                0 & 0 \\
                                                                0 & I \\
                                                              \end{array}
                                                            \right)=0;\]
so
$$T(x\otimes 1) = T(x\otimes 1)(I_H\otimes Q_0) = (x\otimes 1_{\alpha})T, \ \ x\in \cl K(H).$$
It follows that $T=I_H\otimes \Lambda$, for some $\Lambda \in
\mathcal{B}(\ell^2,\ell^2_{\alpha})$. We thus have
\[\Psi(x)=A^*(x\otimes 1)A=A^*\rho(x)A=A^*S^*(x\otimes
1_{\alpha})SA=A^*T^*(x\otimes 1_{\alpha})TA.\] Moreover,
\begin{eqnarray*}H^{\alpha}&=&S\mathcal{E}=\overline{S(\mathcal{K}(H)\otimes 1)
AH}=\overline{S(\mathcal{K}(H)\otimes
1)|_{\mathcal{E}}AH}\\&=&\overline{(\mathcal{K}(H)\otimes
1_{\alpha})SAH}=\overline{(\mathcal{K}(H)\otimes
1_{\alpha})TAH}\end{eqnarray*} and thus the representation of $\Psi$
implemented by $TA$ is minimal.
\end{proof}

\begin{remark}\label{T^*T}
{\rm As part of the proof of Lemma \ref{l_minimalscalar}, it was
shown that $T^*T=I_H\otimes Q_0$.  This fact will be used in the
sequel.}
\end{remark}

The main result of this section is the following.

\begin{theorem}\label{p_compl2}
Let $H_2$ be a separable Hilbert space, $\mathcal{D}_2\subseteq
\mathcal{B}(H_2)$ be a unital C*-subalgebra, $H_1\subseteq H_2$ be a
closed subspace such that the projection $p$ from $H_2$ onto $H_1$
belongs to $\mathcal{D}_2$, and $\mathcal{D}_1 = p\mathcal{D}_2p.$
Let $\Phi : \mathcal{K}(H_2)\rightarrow \mathcal{B}(H_2)$ be a
completely positive $\mathcal{D}_2$-bimodule map, and let $\Psi :
\mathcal{K}(H_1)\rightarrow \mathcal{B}(H_1)$ be the map given by
$\Psi(x)=\Phi(x)|_{H_1}, x\in\mathcal{K}(H_1)$. Suppose that the
operator $V\in M_{\beta,1}(\mathcal{D}'_1)$ implements a minimal
representation of $\Psi$. Then there exist an at most countable
cardinal $\gamma \geq \beta$ and an operator $W\in
M_{\gamma,1}(\mathcal{D}_2')$, which implements a minimal representation of $\Phi$, such that $W|_{H_1} = V$.
\end{theorem}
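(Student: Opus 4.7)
My plan is to start from any minimal Stinespring representation of $\Phi$ with $\cl{D}_2'$-entries and then apply two successive unitary twists on the $\ell^2_\gamma$-factor: the first to steer the part that ``sees'' $H_1$ into the first $\beta$ coordinates, and the second to align that part with the prescribed $V$.

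First, using the discussion preceding Lemma~\ref{equiv} together with Lemma~\ref{l_minimalscalar}, I would pick a minimal Stinespring representation $W_0=(a_i)_{i=1}^\gamma \in M_{\gamma,1}(\cl{D}_2')$ of $\Phi$, so $\Phi(x)=W_0^*(x\otimes 1_\gamma)W_0$ and $\overline{(\cl{K}(H_2)\otimes 1_\gamma)W_0 H_2}=H_2\otimes\ell^2_\gamma$. Because $p\in\cl{D}_2$ commutes with every $a_i\in\cl{D}_2'$, one has $W_0 p=(p\otimes 1_\gamma)W_0$; in particular the restriction $U:=W_0|_{H_1}$ takes values in $H_1\otimes\ell^2_\gamma$, defines a column in $M_{\gamma,1}(\cl{D}_1')$, and satisfies $\Psi(x)=U^*(x\otimes 1_\gamma)U$ for every $x\in\cl{K}(H_1)$.

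Next, I would form the closed invariant subspace $\cl{M}:=\overline{(\cl{K}(H_1)\otimes 1_\gamma)UH_1}\subseteq H_1\otimes\ell^2_\gamma$. Since $\cl{K}(H_1)\otimes 1_\gamma$ is a $*$-algebra and its commutant in $\cl{B}(H_1\otimes\ell^2_\gamma)$ is $I_{H_1}\otimes\cl{B}(\ell^2_\gamma)$, the subspace $\cl{M}$ is reducing and equals $H_1\otimes L$ for some closed $L\subseteq\ell^2_\gamma$. Compressing $U$ to $\cl{M}$ yields a minimal Stinespring representation of $\Psi$, which by uniqueness is unitarily equivalent to the one implemented by $V$; consequently $\dim L=\beta$ and $\beta\leq\gamma$. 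I then choose a unitary $\sigma_0\in\cl{B}(\ell^2_\gamma)$ that carries $L$ onto the span $\ell^2_\beta$ of the first $\beta$ basis vectors and replace $W_0$ with $W_1:=(I_{H_2}\otimes\sigma_0)W_0$: this is still a minimal Stinespring of $\Phi$ with entries in $\cl{D}_2'$, and the image of $W_1|_{H_1}$ is now contained in $H_1\otimes\ell^2_\beta$. Extracting the first $\beta$ components of $W_1|_{H_1}$ produces $V_0\in M_{\beta,1}(\cl{D}_1')$ implementing a minimal Stinespring of $\Psi$, while its remaining $\gamma-\beta$ components vanish on $H_1$.

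Finally, uniqueness of minimal Stinespring for $\Psi$ provides a unitary $\sigma_V\in\cl{B}(\ell^2_\beta)$ with $V=(I_{H_1}\otimes\sigma_V)V_0$. Extending $\sigma_V$ by the identity on the orthogonal complement of $\ell^2_\beta$ in $\ell^2_\gamma$ to a unitary $\tilde\sigma\in\cl{B}(\ell^2_\gamma)$ and setting $W:=(I_{H_2}\otimes\tilde\sigma)W_1$ produces the desired operator: $W$ still implements a minimal Stinespring of $\Phi$, still lies in $M_{\gamma,1}(\cl{D}_2')$, and by construction its first $\beta$ components restricted to $H_1$ coincide with those of $V$ while the remaining components vanish on $H_1$. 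The hard part will be the identification $\cl{M}=H_1\otimes L$ and the attendant containment $U(H_1)\subseteq H_1\otimes\ell^2_\gamma$: both depend critically on the hypothesis $p\in\cl{D}_2$, which is what forces $p$ to commute with the entries of $W_0$ and thereby lets us carry out all subsequent adjustments by unitary twists on the $\ell^2_\gamma$-factor alone.
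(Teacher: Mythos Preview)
Your argument is correct and takes a genuinely different route from the paper's.

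The paper begins from an \emph{arbitrary} (non-minimal) representation $A\in M_{\infty,1}(\cl D_2')$ of $\Phi$, passes to a minimal representation of $\Psi$ via Lemma~\ref{l_minimalscalar}, and then has to recover the ``leftover'' part of $\Phi$: it decomposes $\Phi=\Phi'+\Phi''$ using the projection $I\otimes Q_0^{\perp}$, shows that $\Phi''$ factors through $p^{\perp}$, finds a separate minimal representation $C$ of $\Phi''$ with $Cp=0$, and finally interlaces the entries of $B$ and $C$, checking minimality of the result by verifying strong independence via Lemma~\ref{equiv}. Your approach instead begins with a \emph{minimal} representation $W_0\in M_{\gamma,1}(\cl D_2')$ of $\Phi$ and observes that its restriction to $H_1$ already implements $\Psi$; all that remains is two unitary twists on the $\ell^2_\gamma$ factor, which automatically preserve minimality, membership in $M_{\gamma,1}(\cl D_2')$ (since $(d\otimes 1)(I\otimes u)=(I\otimes u)(d\otimes 1)$ for $d\in\cl D_2$), and the ambient representation of $\Phi$. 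This avoids the splitting $\Phi=\Phi'+\Phi''$, the auxiliary map $\Phi'''$, the interlacing, and the separate minimality verification.

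One point deserves a line of justification. You assert that after the first twist ``the remaining $\gamma-\beta$ components vanish on $H_1$'', i.e.\ that $U(H_1)\subseteq\cl M=H_1\otimes L$ (not merely $U(H_1)\subseteq H_1\otimes\ell^2_\gamma$, which is what you flag in your final paragraph). This is true but not tautological: it follows because $(\cl K(H_1)\otimes 1_\gamma)$ is non-degenerate on $H_1\otimes\ell^2_\gamma$, so if $(e_\lambda)$ is an approximate unit for $\cl K(H_1)$ then $U\xi=\lim_\lambda(e_\lambda\otimes 1_\gamma)U\xi\in\cl M$ for every $\xi\in H_1$. Equivalently, with $Q=I_{H_1}\otimes Q_L$ the projection onto $\cl M$, one has $(x\otimes 1_\gamma)Q^{\perp}U=Q^{\perp}(x\otimes 1_\gamma)U=0$ for all $x\in\cl K(H_1)$, whence $Q^{\perp}U=0$. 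Once this is made explicit, your proof is complete and noticeably shorter than the original.
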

We note that throughout the statement and the proof of the theorem,
we identify $H_1^{\beta}$ with $H_1^{\beta}\oplus 0\subseteq
H_1^{\gamma}$; under this assumption, it will be shown that
$W|_{H_1}$ has range in $H_1^{\beta}$ and is equal to $V$.
\begin{proof}
Let $\Phi(x) = A^*(x\otimes 1)A$, $x\in \mathcal{K}(H_2)$, be any
representation of $\Phi$, where $A\in M_{\infty,1}(\mathcal{D}_2')$.
Then $\Psi(x) = A^*(x\otimes 1)A|_{H_1}$, $x\in \mathcal{K}(H_1)$.
By Lemma \ref{l_minimalscalar}, there exists a minimal
representation
\[\Psi(x)=A^*T^*(x\otimes 1_{\alpha})TA|_{H_1},\ \
x\in\mathcal{K}(H_1),\] where $T=I_{H_1}\otimes
   T_0=(\lambda_{ij}I_{H_1})$
   for
   some
   $T_0=(\lambda_{ij})\in\mathcal{B}(\ell^2,\ell^2_{\alpha})$. Since the representation \mbox{$\Psi(x)=V^*(x\otimes
1_{\beta})V$} is also minimal, there exists \cite{paulsen} a unitary
operator $U : H_1^{\alpha}\rightarrow H_1^{\beta}$ such that
$UTA|_{H_1}=V$ and $U(x\otimes 1_{\alpha})=(x\otimes 1_{\beta})U$,
$x\in \cl K(H_1)$. Hence $U=I_{H_1}\otimes U_0$, where $U_0$ is a
unitary operator in $\mathcal{B}(\ell^2_{\alpha},\ell^2_{\beta} )$,
and thus $\alpha = \beta$. Let $\tilde{U}=I_{H_2}\otimes U_0,\
\tilde{T}=I_{H_2}\otimes T_0$ and
$B=\tilde{U}\tilde{T}A\in\mathcal{B}(H_2,H_2^{\alpha})$. Then
\[B|_{H_1}=\tilde{U}\tilde{T}A|_{H_1}=\tilde{U}\tilde{T}(p\otimes 1)A|_{H_1}=UTA|_{H_1}=V.\]

Let $\Phi^{\prime}:\mathcal{K}(H_2)\rightarrow\mathcal{B}(H_2)$ be
given by
\begin{eqnarray}\label{Phiprime2}\Phi^{\prime}(x)=B^*(x\otimes
1_{\alpha})B, \ \ \  x\in\mathcal{K}(H_2),\end{eqnarray} and let
$\mathcal{E}=\overline{(\mathcal{K}(H_1)\otimes 1)AH_1}$. As in
Lemma \ref{equiv}, the projection $Q$ from $H_1^{\infty}$ onto
$\mathcal{E}$ has the form $I_{H_1}\otimes Q_0$, where $Q_0$ is a
projection in $\mathcal{B}(\ell_2)$. Since $Ap=(p\otimes 1)Ap$, it
follows that \[(I_{H_2}\otimes Q_0^{\perp})Ap=(p\otimes
Q_0^{\perp})Ap=(I_{H_1}\otimes Q_0^{\perp})Ap=0,\] where the last
equality follows from the fact that $ApH_2\subseteq\mathcal{E}$. We
thus have
\begin{eqnarray}\label{perp2}(I_{H_2}\otimes Q_0^{\perp})A=(I_{H_2}\otimes
Q_0^{\perp})Ap^{\perp}.\end{eqnarray} Define
$\Phi^{\prime\prime}:\mathcal{K}(H_2)\rightarrow\mathcal{B}(H_2)$ by
 \begin{eqnarray}\label{Phi2prime2}\Phi^{\prime\prime}(x)=A^*(I_{H_2}\otimes
Q_0^{\perp})(x\otimes 1)(I_{H_2}\otimes Q_0^{\perp})A, \ \
x\in\mathcal{K}(H_2).\end{eqnarray} Then for every $x$ in
$\mathcal{K}(H_2)$, we have that
\begin{eqnarray*}\label{Phis2}
& & \Phi^{\prime}(x)+\Phi^{\prime\prime}(x)\\
& = & A^*\tilde{T}^*\tilde{U}^*(x\otimes
1_{\alpha})\tilde{U}\tilde{T}A+A^*(I_{H_2}\otimes
Q_0^{\perp})(x\otimes 1)(I_{H_2}\otimes
Q_0^{\perp})A\\&=&A^*(x\otimes
1)\tilde{T}^*\tilde{U}^*\tilde{U}\tilde{T}A+A^*(x\otimes
1)(I_{H_2}\otimes Q_0^{\perp})A\\&=&A^*(x\otimes 1)(I_{H_2}\otimes
Q_0)A+A^*(x\otimes 1)(I_{H_2}\otimes Q_0^{\perp})A\\
& = & A^*(x\otimes 1)A = \Phi(x),
\end{eqnarray*}
where the third equality follows from Remark \ref{T^*T} and the fact
that $\tilde{U}$ is unitary.

Now let $\Phi^{\prime\prime\prime}$ be the restriction of
$\Phi^{\prime\prime}$ to $\mathcal{K}(H_2\ominus H_1)$ so that, for
$x\in\mathcal{K}(H_2\ominus H_1)$, we have
\begin{eqnarray*}
\Phi^{\prime\prime\prime}(x) & = & A^*(I_{H_2}\otimes
Q_0^{\perp})(x\otimes 1)(I_{H_2}\otimes Q_0^{\perp})A|_{H_2\ominus
H_1}\\ & = & A^*(I_{H_2}\otimes Q_0^{\perp})(x\otimes
1)(I_{H_2}\otimes Q_0^{\perp})A.
\end{eqnarray*}
Using Lemma \ref{l_minimalscalar} , find a minimal representation
\[\Phi^{\prime\prime\prime}(x)=A^*(I_{H_2}\otimes
Q_0^{\perp})R^*(x\otimes 1_{\delta})R(I_{H_2}\otimes Q_0^{\perp})A\]
where $R$ is a bounded operator from $(H_2\ominus H_1)^{\infty}$
into $(H_2\ominus H_1)^{\delta}$ that can be expressed as
$R=I_{H_2\ominus H_1}\otimes R_0$ for some $R_0$ in
$\mathcal{B}(\ell^2,\ell^2_{\delta})$, $\delta$ being an at most
countable cardinal. By construction, $R(x\otimes 1)R^*=x\otimes
1_{\delta}$.  Letting \linebreak $C=R(I_{H_2}\otimes Q_0^{\perp})A$,
equation (\ref{perp2}) implies that
\begin{eqnarray}\label{Cperp2}Cp^{\perp}=C\end{eqnarray} and hence we have that for each $x\in\mathcal{K}(H_2)$,
\begin{eqnarray*}C^*(x\otimes 1_{\delta})C&=&p^{\perp}C^*(x\otimes
1_{\delta})Cp^{\perp}=C^*(p^{\perp}xp^{\perp}\otimes
1_{\delta})C\\&=&\Phi^{\prime\prime}(p^{\perp}xp^{\perp})=A^*(I_{H_2}\otimes
Q_0^{\perp})(p^{\perp}xp^{\perp}\otimes 1)(I_{H_2}\otimes
Q_0^{\perp})A\\&=&p^{\perp}A^*(I_{H_2}\otimes Q_0^{\perp})(x\otimes
1)(I_{H_2}\otimes Q_0^{\perp})Ap^{\perp}\\
& \stackrel{\text{by } (\ref{perp2})}{=} & A^*(I_{H_2}\otimes
Q_0^{\perp})(x\otimes 1)(I_{H_2}\otimes Q_0^{\perp})A =
\Phi^{\prime\prime}(x).\end{eqnarray*}

The operators $B:H_2\rightarrow H_2^{\alpha}$ and $C:H_2\rightarrow
(H_2\ominus H_1)^{\delta}$ can be expressed as columns of length
$\alpha$ and $\delta$, respectively; say, $B=(b_1,b_2,\dots) ^t$ and
$C=(c_1,c_2,\dots)^t$, where $b_i, c_j\in\mathcal{D}_2'$ for all $i,
j$. Let $U$ be the column operator with entries $B$ and $C$, that
is,
\begin{eqnarray}\label{U2}
U=(B^t, C^t)^t.\end{eqnarray} Suppose that
$\sum_{i=1}^{\alpha}\lambda_i b_i + \sum_{j=1}^{\delta}\mu_{j}c_j=0$
for some $(\lambda_i) _{i=1}^{\alpha}\in \ell_{\alpha}^2$,
\mbox{$(\mu_j)_{j=1}^{\delta}\in \ell^2_{\delta}$.}  Then
$\sum_{i=1}^{\alpha}\lambda_i pb_i +
\sum_{j=1}^{\delta}\mu_{j}pc_j=0$. Since $Cp=0$, we have that
$\sum_{i=1}^{\alpha}\lambda_i pb_i=0$ and since $B|_{H_1}=V$
implements a minimal representation, we have by Lemma \ref{equiv}
that the entries of $Bp$ are strongly independent and hence
$\lambda_i=0$ for all $i$. Consequently,
$\sum_{j=1}^{\delta}\mu_{j}c_j=0$ and the minimality of the
representation associated with $C$ implies, again by Lemma
\ref{equiv}, that $\mu_j=0$ for all $j$.

Let
\[b_i'=\left\{\begin{array}{ll}b_i,&1\leq
i\leq\alpha,\\0,&i>\alpha,\end{array}\right.\ \ \
c_i'=\left\{\begin{array}{ll}c_i, & 1\leq
i\leq\delta,\\0,&i>\delta,\end{array}\right.\] and set
$W=(b_1',c_1',b_2',c_2',\dots)^t$ --- note that in the case in which
both cardinals are finite the sequence has finitely many non-zero
terms. In the case where both $\alpha$ and $\delta$ are infinite,
the series $\sum_{i=1}^{\infty}b_i^*xb_i+c_i^*xc_i$ is easily seen
to converge weak* to
$\sum_{i=1}^{\infty}b_i^*xb_i+\sum_{i=1}^{\infty} c_i^*xc_i$. It now
follows that $\Phi(x) = W^*(x\otimes 1_{\alpha+\delta})W$, $x\in \cl
K(H_2)$. By Lemma \ref{equiv}, the representation of $\Phi$
implemented by $W$ is minimal. We note that
\[W|_{H_1}=(b_1',c_2',b_2',c_2',\dots)^t|_{H_1}=(b_1,0,b_2,0,\dots)|_{H_1}=B|_{H_1}=V,\]
where we used (\ref{Cperp2}) to obtain the second equality and the
third equality follows as a result of the identification made at the
start of the proof.
\end{proof}

Next we show how Theorem~\ref{p_compl2} can be
applied to obtain a result closely related to Theorem \ref{c_c2}.

\begin{theorem}\label{repv}
Let $\mathcal{D}\subseteq \mathcal{B}(H)$ be a unital C*-subalgebra,
$(p_n)_{n=1}^{\infty}\subseteq\mathcal{D}$ be an increasing sequence
of projections such that $\vee_{n\in \mathbb{N}} p_n=I$, $H_n=p_nH$
and $\mathcal{D}_n=p_n\mathcal{D}p_n$. Let
$\Phi:\cup_{n=1}^{\infty}\mathcal{K}(H_n)\rightarrow\cup_{n=1}^{\infty}\mathcal{B}(H_n)$
be a map such that $\Phi|_{\mathcal{K}(H_n)}$ is completely positive
and $\mathcal{D}_n$-bimodular. Then there exists a family
$(a_i)_{i=1}^{\infty}$ of closable operators affiliated with
$\mathcal{D}'$ such that
$$\Phi(x)=\sum_{i=1}^\infty a_i^*xa_i, \ \ x\in
\cup_{n=1}^{\infty}\mathcal{K}(H_n).$$
\end{theorem}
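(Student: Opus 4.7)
The plan is to iterate Theorem \ref{p_compl2} to build a coherent tower of minimal Stinespring representations of the restrictions $\Phi_n := \Phi|_{\mathcal{K}(H_n)}$, and then to read off the operators $a_i$ as the common extensions of their row components.

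First I would invoke the standard (bounded) modular Stinespring theorem to obtain a minimal representation $\Phi_1(x) = V_1^*(x\otimes 1_{\gamma_1}) V_1$ with $V_1 \in M_{\gamma_1,1}(\mathcal{D}_1')$. Given $V_n$, I would apply Theorem \ref{p_compl2} with $H_1 = H_n$, $H_2 = H_{n+1}$, $p = p_n$ (observing that $p_n \in \mathcal{D}_{n+1}$ and $p_n \mathcal{D}_{n+1} p_n = \mathcal{D}_n$), taking $\Phi$ to be $\Phi_{n+1}$ and $\Psi$ to be its restriction to $\mathcal{K}(H_n)$, which coincides with $\Phi_n$ because $\mathcal{D}_{n+1}$-bimodularity and $p_n \in \mathcal{D}_{n+1}$ force $\Phi_{n+1}(p_n x p_n) = p_n \Phi_{n+1}(x) p_n$. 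This yields a minimal representation $V_{n+1}\in M_{\gamma_{n+1},1}(\mathcal{D}_{n+1}')$ of $\Phi_{n+1}$ with $V_{n+1}|_{H_n} = V_n$. After zero-padding each column to a common countable length and relabelling to respect the extension, the entries $a_i^{(n)}$ of $V_n$ then satisfy $a_i^{(n+1)}|_{H_n} = a_i^{(n)}$ for every $i$ and $n$.

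Next I would define $a_i$ on the dense subspace $\cl X := \cup_n H_n$ by $a_i\xi := a_i^{(n)}\xi$ whenever $\xi \in H_n$; compatibility makes this unambiguous. For closability: if $\xi_k \to 0$ in $\cl X$ and $a_i\xi_k \to \eta$, fix $m$ and choose $k$ large enough that $\xi_k \in H_{n_k}$ with $n_k \geq m$; since $p_m \in \mathcal{D}_{n_k}$ commutes with $a_i^{(n_k)}$, we get $p_m a_i\xi_k = a_i^{(n_k)} p_m \xi_k = a_i^{(m)} p_m \xi_k \to 0$ by boundedness of $a_i^{(m)}$. Hence $p_m\eta = 0$ for all $m$, so $\eta = 0$. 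For affiliation of the closure $\bar a_i$ with $\mathcal{D}'$: given $r \in \mathcal{D}$ and $\xi \in H_n$, the vectors $(p_m r p_m)\xi \in H_m$ converge to $r\xi$, and since $p_m r p_m \in \mathcal{D}_m$ commutes with $a_i^{(m)}$, one has $a_i(p_m r p_m)\xi = (p_m r p_m)a_i\xi \to r\,a_i\xi$; closedness then places $r\xi \in \dom(\bar a_i)$ with $\bar a_i r\xi = r\,\bar a_i\xi$. A routine extension to all of $\dom(\bar a_i)$ followed by a Kaplansky-density argument promotes the commutation to every unitary $u \in \mathcal{D}''$, giving $\bar a_i \in \Aff \mathcal{D}'$. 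Finally, the identity $\Phi(x) = \sum_i a_i^* x a_i$ for $x \in \mathcal{K}(H_n)$ reduces on $H_n$ to the Stinespring identity $\Phi_n(x) = \sum_i a_i^{(n)*} x a_i^{(n)}$, in which only finitely many (or countably many with appropriate convergence) terms contribute.

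The principal technical obstacle will be pinning down the indexing in the inductive step: Theorem \ref{p_compl2} guarantees $V_{n+1}|_{H_n}=V_n$ only up to the interleaving shuffle exhibited in its constructive proof, so one must carefully reindex and zero-pad the successive columns so that the components genuinely restrict to the previous ones across all levels. Once that bookkeeping is arranged, the verifications of closability, affiliation and the summation formula proceed as sketched.
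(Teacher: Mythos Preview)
Your proposal is correct and follows essentially the same route as the paper's proof: iterate Theorem~\ref{p_compl2} to obtain a coherent tower of minimal Stinespring representations $V_n$ with $V_{n+1}|_{H_n}=V_n$, then glue the components into closable operators $a_i$ affiliated with $\mathcal{D}'$. The paper handles the reindexing issue you flag by keeping the block structure of $U_n$ explicit (stacking rather than interleaving the successive $B$- and $C$-blocks) and then passing to a single column $V$ via a fixed diagonal enumeration of the entries $b_{r,s}$; your ``zero-pad and relabel'' amounts to the same bookkeeping, and your closability and affiliation arguments match what the paper leaves as ``standard arguments''.
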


In the course of the proof we will encounter operators
$V_n:H_n\rightarrow(H_n)^{\alpha_n}$ and
$V_{n+1}:H_{n+1}\rightarrow(H_{n+1})^{\alpha_{n+1}}$ for cardinals
$\alpha_n\leq\alpha_{n+1}$; we will identify throughout the proof
$(H_n)^{\alpha_n}$ with the subspace $(H_n)^{\alpha_n}\oplus 0$ of
$(H_n)^{\alpha_{n+1}}$.

\begin{proof}
Let $H_0 = \cup_{n=1}^{\infty}H_n$ and
$\Phi_n:\mathcal{K}(H_n)\rightarrow\mathcal{B}(H_n)$ be the map
given by $\Phi_n(x)=\Phi(x)|_{H_n}$ for $x\in\mathcal{K}(H_n)$.
Using analogous notation and repeating the process used to obtain
the operator $U$ in identity (\ref{U2}), we can form an operator
$U_n : H_n\rightarrow\oplus_{i=1}^n H_n^{\alpha_i-\alpha_{i-1}}$
such that (if we allow a slight abuse of notation and let $x\otimes
1_{\alpha_n}$ also denote $(x\otimes
1_{\alpha_1})\otimes\dots\otimes(x\otimes
1_{\alpha_n-\alpha_{n-1}})$) we have $\Phi_n(x)=U_n^*(x\otimes
1_{\alpha_n})U_n, x\in\mathcal{K}(H_n)$, and $U_n|_{H_m}=U_m$ for
all $m\leq n$.  By construction, $U_n$ is equal to

\begin{eqnarray*}\left(\begin{array}{cc}
\begin{array}{c}
\begin{array}{cc}
\left(\begin{array}{c}b_{1,1}^{(1)}\\b_{1,2}^{(1)} \\ \vdots
\end{array} \right)+ &
 \left(\begin{array}{c}b_{1,1}^{(2)}-b_{1,1}^{(1)}\\b_{1,2}^{(2)}-b_{1,2}^{(1)} \\
\vdots\end{array}\right)   \\
 &  \left(\begin{array}{c}c_{2,1}^{(2)}\\c_{2,2}^{(2)}\\
\vdots\end{array}\right)\end{array}+\dots + \\
\vspace{30mm}\end{array} &
\left(\begin{array}{c}b_{1,1}^{(n)}-b_{1,1}^{(n-1)}\\b_{1,2}^{(n)}-b_{1,2}^{(n-1)}\\
\vdots\vspace{3mm}\\
b_{2,1}^{(n)}-b_{2,1}^{(n-1)}\\b_{2,2}^{(n)}-b_{2,2}^{(n-1)}\\
\vdots\\ \vdots\\ \vdots\\ \vdots\\ b_{n-1,1}^{(n)}-c_{n-1,1}^{(n)}\\b_{n-1,2}^{(n)}-c_{n-1,2}^{(n)}\\ \vdots\end{array}\right)\\
  & \left(\begin{array}{c}c_{n,1}^{(n)}\\c_{n,2}^{(n)}\\
\vdots\end{array}\right)
\end{array}\right)
=
\end{eqnarray*}
$$\left(b_{1,1}^{(n)}\\b_{1,2}^{(n)}
\cdots b_{2,1}^{(n)}\\b_{2,2}^{(n)}\cdots \cdots
b_{n,1}^{(n)}\\b_{n,2}^{(n)}\cdots\right)^t.$$
where for convenience of notation we assume the column operators
have infinitely many terms; it should be noted however that these
may in fact terminate at some finite value. We note that
$b_{r,s}^{(t)}\in\mathcal{D}'p_t$. Since
$b_{r,s}^{(n)}|_{H_{n-1}}=b_{r,s}^{(n-1)}$, we can form a densely
defined operator $b_{r,s}$ by letting
$b_{r,s}|_{H_n}=b_{r,s}^{(n)}$. Standard arguments show that
$b_{r,s}$ is closable and affiliated with $\mathcal{D}'$. Define an
operator $V:H_0\rightarrow (H_0)^{\infty}$ by
\[V=(a_1,a_2,\dots)^t\overset{def}=\left(b_{1,1},b_{1,2},b_{2,1},b_{1,3},b_{2,2},b_{3,1},b_{1,4},\dots\right)^t,\]
where we have, if necessary, extended $V$ to an infinite column
operator.  From this we can obtain an operator
$V_n:H_n\rightarrow(H_n)^{\alpha_n}$ by observing that when
restricted to $H_n$, all terms $b_{i,j}$ for which $i>n$ vanish, and
each of the remaining $\alpha_n$ terms is such that
$b_{i,j}|_{H_n}=b_{i,j}^n$. This also implies that $V_n|_{H_m}=V_m$
for all $m\leq n$ (in the sense described before the start of the
proof). It can easily be seen that
\[V_n^*(x\otimes
1_{\alpha_n})V_n=U_n^*(x\otimes 1_{\alpha_n})U_n.\] and that $V_n$
is a bounded column operator.

Finally, given $x\in\cup_{n=1}^{\infty}\mathcal{K}(H_n)$, fix $n$
such that $x\in\mathcal{K}(H_n)$ and notice that
\[\Phi(x)=\Phi_n(x)=U_n^*(x\otimes 1_{\alpha_n})U_n=V_n^*(x\otimes 1_{\alpha_n})V_n=V^*(x\otimes
1)V=\sum_{i=1}^{\infty}a_i^*xa_i.\] We note that, given
$x\in\mathcal{K}(H_n)$, we have that
\[\sum_{i=1}^{\infty}a_i^*xa_i=\sum_{i=1}^{\infty}a_i^*p_nxp_na_i=\sum_{i=1}^{\infty}(p_na_i^*p_n)x(p_na_ip_n).\]
Since $p_na_ip_n$ is a bounded operator for all $k\in\mathbb{N}$ and
the sequence $(p_na_ip_n)_{k=1}^{\infty}$ defines the bounded column
operator $V_n$, this series indeed converges in the weak$^*$
topology.
\end{proof}

\begin{remark}\label{r_rc}{\rm In the course of the proof of Theorem \ref{repv} it
was shown that $b_{r,s}p_n=b_{r,s}^{(n)}\in\mathcal{D}'p_n$ for all
$r,s\in\mathbb{N}, n\geq r$; thus, $a_ip_n\in\mathcal{D}'p_n$ for
all $i,n\in\mathbb{N}$.  This will used in the sequel.}\end{remark}

\section{Positive local Schur multipliers}\label{s_plsm}

In this section we examine positive local Schur multipliers. The
main results of the section are the characterisation contained in
Theorem \ref{th_lps}, the lifting established in Corollary
\ref{c_mfa} and the characterisation result of Theorem \ref{th_toe}.

We begin by recalling some definitions, known results and notational
conventions. We assume throughout the section that $(X,\mu)$ and
$(Y,\nu)$ are standard ($\sigma$-finite) measure spaces, equip
$X\times Y$ with the product measure and denote by $\mathfrak{B}(X)$
the linear space of all measurable functions on $X$. If $f\in
L^{\infty}(X)$, we write $M_f$ for the operator of multiplication by
$f$ acting on $L^2(X)$, and let $\cl D = \{M_f : f\in
L^{\infty}(X)\}$. The characteristic function of a measurable subset
$\alpha\subseteq X$ will be denoted by $\chi_{\alpha}$. For each
$k\in L^2(X\times Y)$, we let $T_k \in \mathcal{C}_2(L^2(X),L^2(Y))$
be the operator given by
\[(T_k\xi)(y)=\int_Xk(x,y)\xi(x)d\mu(x), \ \ \xi\in L^2(X)\]
and $S_{\varphi} : \mathcal{C}_2(L^2(X),L^2(Y))\rightarrow
\mathcal{C}_2(L^2(X),L^2(Y))$ be the map given by $S_{\varphi}(T_k)$
$=$ $T_{\varphi k}$, $k\in L^2(X\times Y)$. Recall that the map
$S_{\varphi}$ is called a \emph{(measurable) Schur multiplier} if
$S_{\varphi}$ is bounded in $\|\cdot\|_{\op}$.

We next recall some notions from \cite{a} and \cite{stt2011}. Two
subsets $E, F\subseteq X$ are called \emph{equivalent} (written
$E\sim F$) if their symmetric difference is contained in a null set.
A subset of $X\times Y$ is said to be a \emph{rectangle} if it has
the form $\alpha\times\beta$, where $\alpha\subseteq X$ and
$\beta\subseteq Y$ are measurable.  A subset $E\subseteq X\times Y$
is called \emph{marginally null} if $E\subseteq (X_0\times Y)\cup
(X\times Y_0)$, where $\mu(X_0)=\nu(Y_0)=0$.  We call two subsets
$E,F\subseteq X\times Y$ \emph{marginally equivalent} (and write
\mbox{$E\simeq F$)} if their symmetric difference is marginally
null. A measurable function \mbox{$\nph : X\times Y\rightarrow
\bb{C}$} is called \emph{$\omega$-continuous} if $\nph^{-1}(U)$ is
marginally equivalent to a countable union of rectangles, for every
open subset $U\subseteq \bb{C}$. A countable family of rectangles is
called a \emph{covering family for $X\times Y$} if its union is
marginally equivalent to $X\times Y$. We say that a function
$\varphi\in\mathfrak{B}(X\times Y)$ is a \emph{local Schur
multiplier} if there exists a covering family
$\{\kappa_m\}_{m=1}^{\infty}$ of rectangles in $X\times Y$ such that
$\varphi|_{\kappa_m}$ is a Schur multiplier, for all $m$.

\begin{proposition}\label{p_charin}
For a function $\varphi\in\mathfrak{B}(X\times X)$, the following
are equivalent:
\begin{enumerate}[(i)]\item $\nph$ is a local Schur multiplier;
\item there exists an increasing sequence $(X_n)_{n=1}^{\infty}$ of measurable subsets of $X$ such that
$X\setminus(\cup_{n=1}^{\infty}X_n)$ is null and
$\varphi|_{X_n\times X_n}$ is a Schur multiplier for each $n$.
\end{enumerate}
\end{proposition}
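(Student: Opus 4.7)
The plan is to reduce everything to the series characterisations of (local) Schur multipliers recalled in the introduction. Direction (ii)$\Rightarrow$(i) is immediate: setting $\kappa_n = X_n\times X_n$, the union $\cup_n\kappa_n = (\cup_n X_n)\times(\cup_n X_n)$ is marginally equivalent to $X\times X$ since $X\setminus\cup_n X_n$ is null, so $\{\kappa_n\}_{n=1}^\infty$ is a covering family of rectangles on each of which $\varphi$ restricts to a Schur multiplier.

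For (i)$\Rightarrow$(ii), the strategy is to exploit the coincidence $X=Y$ by producing a single exhausting sequence of subsets of $X$ that bounds the pointwise $\ell^2$-norms of both the ``$a_k$'' and ``$b_k$'' factors simultaneously. Concretely, I would invoke the characterisation of local Schur multipliers from \cite{stt2011} recalled in the introduction: write
\[
\varphi(x,y)=\sum_{k=1}^\infty a_k(x)b_k(y)\quad\text{for almost all }(x,y)\in X\times X,
\]
with $(a_k),(b_k)$ measurable and $\sum_k|a_k(x)|^2<\infty$ for a.a.\ $x$, $\sum_k|b_k(x)|^2<\infty$ for a.a.\ $x$. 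Then set
\[
X_n=\Bigl\{x\in X : \sum_{k=1}^\infty|a_k(x)|^2\leq n\ \text{ and }\ \sum_{k=1}^\infty|b_k(x)|^2\leq n\Bigr\}.
\]
The functions $x\mapsto\sum_k|a_k(x)|^2$ and $x\mapsto\sum_k|b_k(x)|^2$ are measurable as pointwise suprema of measurable partial sums, so each $X_n$ is measurable; the sequence is increasing by construction, and $X\setminus\cup_n X_n$ is precisely the null set where at least one of the two series diverges.

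Finally, on each rectangle $X_n\times X_n$ the function $\varphi$ still admits the decomposition $\sum_k a_k(x)b_k(y)$ a.e., and now
\[
\esssup_{x\in X_n}\sum_{k=1}^\infty|a_k(x)|^2\leq n,\qquad \esssup_{y\in X_n}\sum_{k=1}^\infty|b_k(y)|^2\leq n.
\]
Hence the Grothendieck--Peller characterisation of (bounded) measurable Schur multipliers recalled in the introduction applies and shows that $\varphi|_{X_n\times X_n}$ is a Schur multiplier, completing the proof.

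The argument is essentially routine once the two characterisation theorems are in hand; there is no real obstacle, but the one point that requires the hypothesis $X=Y$ is the simultaneous bound on the ``rows'' and ``columns'': in the asymmetric setting one would only obtain nested measurable exhaustions $X=\cup_n X_n$ and $Y=\cup_n Y_n$ giving rectangles $X_n\times Y_n$, which is why the proposition is stated for a single space.
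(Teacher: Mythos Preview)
Your argument is correct, but it follows a different path from the paper's. For (i)$\Rightarrow$(ii) the paper works directly from the \emph{definition} of a local Schur multiplier (existence of a covering family $\{\kappa_m\}$ of rectangles): it invokes \cite[Lemma~3.4]{stt2011} to produce a pairwise disjoint family $\{Y_i\}\subseteq X$ with $\cup_i Y_i\sim X$ such that each $Y_i\times Y_j$ lies in finitely many $\kappa_m$'s, and then uses \cite[Lemma~2.4(ii)]{stt2011} (stability of Schur multipliers under finite unions of rectangles) to conclude that $\varphi|_{X_n\times X_n}$ is a Schur multiplier for $X_n=\cup_{i\le n}Y_i$. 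You instead invoke the full structure theorem for local Schur multipliers from \cite{stt2011} recalled in the introduction, and build the $X_n$ as sublevel sets of the pointwise $\ell^2$-norms of the factors $a_k,b_k$. Your route is shorter and conceptually transparent once that structure theorem is available; the paper's route is more elementary, using only combinatorial lemmas about rectangular covers, and keeps the proposition logically prior to (rather than a consequence of) the main characterisation in \cite{stt2011}. Your closing remark about why the symmetric hypothesis $X=Y$ matters is accurate for your approach, though note that the paper's rectangular-decomposition argument also uses $X=Y$ in forming $X_n=\cup_{i\le n}Y_i$ as a single increasing chain in $X$.
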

\begin{proof}
(i)$\Rightarrow$(ii)
Suppose that $\varphi$ is a local Schur multiplier and
let $\{\kappa_m\}$ be a covering family for $X\times X$ such that
$\varphi|_{\kappa_m}$ is a Schur multiplier. By \cite[Lemma
3.4]{stt2011}, there exists a pairwise disjoint family
$\{Y_i\}_{i=1}^{\infty}\subseteq X$ such that
$\cup_{i=1}^{\infty}Y_i$ is equivalent to $X$ and each $Y_i\times
Y_j$ is contained in a finite union of sets from
$\{\kappa_m\}_{m=1}^{\infty}$. By  \cite[Lemma 2.4 (ii)]{stt2011},
$\varphi|_{Y_i\times Y_j}$ is a Schur multiplier. Let
$X_n=\cup_{i=1}^nY_i$; then
$(X_n)_{n\in\mathbb{N}}$ is an increasing sequence, $X_n\times X_n =
\cup_{i,j=1}^nY_i\times Y_j$ and
$\cup_{n=1}^{\infty}X_n\sim\cup_{i=1}^{\infty}Y_i\sim X$. By
\cite[Lemma 2.4 (ii)]{stt2011}, $\varphi|_{X_n\times X_n}$ is a
Schur multiplier and the proof is complete.

(ii)$\Rightarrow$(i) The collection $(X_n\times X_n)_{n=1}^{\infty}$
is a covering family and thus $\varphi$ is a local Schur multiplier.
\end{proof}

We will denote by $\mathcal{C}_2(H)^+$ the cone of all positive
operators in $\mathcal{C}_2(H)$. In view of Proposition
\ref{p_charin}, it is natural to introduce the following notions.

\begin{definition}\label{d_iii}
Let $(X,\mu)$ be a standard measure space and $\varphi$ be a
measurable function on $X\times X$. We say that $\varphi$ is a

(i) \emph{positive Schur multiplier} if the map $S_{\varphi}$ is
bounded in $\|\cdot\|_{\op}$ and leaves $\mathcal{C}_2(L^2(X))^+$
invariant;

(ii) \emph{positive local Schur multiplier} if there exists an
increasing sequence $(X_n)_{n=1}^{\infty}$ of measurable subsets of
$X$ such that $X\setminus (\cup_{n=1}^{\infty}X_n)$ is null and
$\varphi|_{X_n\times X_n}$ is a positive Schur multiplier for every
$n$.
\end{definition}

It is immediate that every positive Schur multiplier is a Schur
multiplier and that every positive local Schur multiplier is a local
Schur multiplier.

R. R. Smith has established an automatic complete boundedness result
for maps, modular over C*-algebras with a cyclic vector
\cite[Theorem 2.1]{smith1991}. We will need the following automatic
complete positivity result; we omit its proof since it follows
closely the ideas in Smith's proof.

\begin{lemma}\label{pcp}
Let $H$ be a Hilbert space, $\cl E\subseteq \cl B(H)$ be an operator
system, and $\cl B\subseteq \cl B(H)$ be a C*-algebra with a cyclic
vector such that $\cl B \cl E \cl B \subseteq \cl E$.
Then every positive $\cl B$-bimodule map $\Phi :
\mathcal{E}\rightarrow\mathcal{B}(H)$ is completely positive.
\end{lemma}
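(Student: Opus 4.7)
The plan is to follow R.\,R. Smith's argument from \cite{smith1991} almost verbatim, adapted so that positivity replaces boundedness throughout. Fix a cyclic vector $e\in H$ for $\cl B$. To verify complete positivity, I would fix $n\in\bb{N}$, a positive matrix $X=[x_{ij}]\in M_n(\cl E)^+$, and vectors $\eta_1,\dots,\eta_n\in H$, and aim to show
\[
\sum_{i,j=1}^{n}(\Phi(x_{ij})\eta_j,\eta_i)\geq 0.
\]

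The first step is to note that $\Phi$ is automatically bounded: a positive linear map from an operator system into $\cl B(H)$ is bounded in norm (compose with any vector state to get a positive linear functional on $\cl E$, apply the standard fact that these are bounded, and then invoke uniform boundedness). Next I would exploit the cyclic vector: for each $\varepsilon>0$ choose $b_1,\dots,b_n\in\cl B$ with $\|\eta_i - b_i e\|<\varepsilon$. Because $\Phi$ is bounded it suffices, letting $\varepsilon\to 0$, to prove the positivity inequality for vectors of the form $b_i e$.

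The key computational step uses bimodularity together with the hypothesis $\cl B\cl E\cl B\subseteq\cl E$. Setting $B=(b_1,\dots,b_n)^t\in M_{n,1}(\cl B)$ we have $B^*XB=\sum_{i,j}b_i^*x_{ij}b_j\in\cl E$, and since $X\geq 0$ in $M_n(\cl B(H))$ one has $B^*XB\geq 0$ in $\cl E$. Using that $\Phi$ is a $\cl B$-bimodule map,
\[
\sum_{i,j=1}^{n}(\Phi(x_{ij})b_j e, b_i e)
= \sum_{i,j=1}^{n}(\Phi(b_i^* x_{ij} b_j)e, e)
= \bigl(\Phi(B^*XB)e,e\bigr),
\]
which is nonnegative by positivity of $\Phi$. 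Letting $\varepsilon\to 0$ and using boundedness of $\Phi$ gives $\sum_{i,j}(\Phi(x_{ij})\eta_j,\eta_i)\geq 0$, proving complete positivity.

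I do not expect a real obstacle here; the only place that needs care is making sure the intermediate element $B^*XB$ genuinely lies in $\cl E$ (which is exactly what the module hypothesis $\cl B\cl E\cl B\subseteq\cl E$ is designed to guarantee) and that the approximation step is legal, for which the automatic boundedness of a positive map on an operator system into $\cl B(H)$ suffices. The argument uses nothing about $\cl E$ being a C*-algebra, only that it is a self-adjoint subspace stable under two-sided multiplication by $\cl B$ so that bimodularity of $\Phi$ can be applied inside it.
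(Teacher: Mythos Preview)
Your proposal is correct and follows precisely the approach the paper intends: the paper omits the proof entirely, stating only that it ``follows closely the ideas in Smith's proof'' \cite{smith1991}, and your argument is a faithful implementation of that idea (approximate arbitrary vectors by $b_ie$, compress $X$ to $B^*XB\in\cl E$ via bimodularity, and apply scalar positivity). The only minor remark is that the boundedness of $\Phi$ follows more directly from $-\Phi(1)\leq\Phi(x)\leq\Phi(1)$ for self-adjoint $x$ with $\|x\|\leq 1$, without needing the uniform boundedness principle.
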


We can now formulate and prove one of the main results of this section.

\begin{theorem}\label{th_lps}
A function $\varphi\in\mathfrak{B}(X\times X)$ is a positive local
Schur multiplier if and only if there exists a measurable function
$a : X\rightarrow \ell^2$ such that $\varphi(x_1,x_2) =
(a(x_1),a(x_2))_{\ell^2}$ almost everywhere on $X\times X$.
\end{theorem}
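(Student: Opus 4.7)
The plan is to prove the two implications separately, with Theorem~\ref{repv} supplying the core of the nontrivial direction and Lemma~\ref{pcp} handling an automatic complete positivity step.

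For sufficiency, assume $\varphi(x_1,x_2) = (a(x_1),a(x_2))_{\ell^2}$ a.e.\ with $a = (\alpha_k)_k : X \to \ell^2$ measurable, and set $X_n := \{x \in X : \sum_k |\alpha_k(x)|^2 \leq n\}$. These form an increasing sequence exhausting $X$ modulo a null set. On $X_n$ the family $(\alpha_k|_{X_n})$ satisfies the uniform $\ell^2$-bound from Peller's characterisation, so $\varphi|_{X_n \times X_n}$ is a Schur multiplier. A direct integral-kernel computation shows that multiplication of the kernel by $\alpha_i(x)\overline{\alpha_i(y)}$ corresponds to $T \mapsto M_{\alpha_i}^* T M_{\alpha_i}$, giving
\[
S_{\varphi|_{X_n \times X_n}}(T) \;=\; \sum_i M_{\alpha_i}^* T M_{\alpha_i},
\]
a completely positive sum, so $\varphi|_{X_n \times X_n}$ is a positive Schur multiplier and $\varphi$ a positive local one.

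For necessity, let $(X_n)$ witness $\varphi$ as a positive local Schur multiplier and put $H_n = L^2(X_n)$, $p_n$ the projection from $L^2(X)$ onto $H_n$, and $\cl D = \{M_f : f \in L^\infty(X)\}$, a masa for which $\cl D' = \cl D$. The bounded positive map $\Phi_n := S_{\varphi|_{X_n \times X_n}} : \cl K(H_n) \to \cl B(H_n)$ is $L^\infty(X_n)$-bimodular, and $L^\infty(X_n)$ has a cyclic vector on $H_n$ by $\sigma$-finiteness. Passing to the weak*-continuous second-dual extension of $\Phi_n$ to an $L^\infty(X_n)$-bimodule map on $\cl B(H_n)$ and invoking Lemma~\ref{pcp} upgrades $\Phi_n$ to a completely positive map. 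The family $\{\Phi_n\}_n$ then assembles into a single map $\Phi : \bigcup_n \cl K(H_n) \to \bigcup_n \cl B(H_n)$ satisfying $\Phi(p_n x p_m) = p_n\Phi(x)p_m$, the compatibility being the routine fact that Schur multiplication commutes with diagonal compressions.

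Applying Theorem~\ref{repv} with $\cl D$ then produces closable operators $a_i$ affiliated with $\cl D' = L^\infty(X)$, necessarily of the form $M_{\alpha_i}$ for measurable $\alpha_i$, such that $\Phi(x) = \sum_i M_{\alpha_i}^* x M_{\alpha_i}$ on $\bigcup_n \cl K(H_n)$. Matching integral kernels against arbitrary $T_k \in \cl C_2(H_n)$ yields $\varphi(x,y) = \sum_i \alpha_i(x)\overline{\alpha_i(y)}$ a.e.\ on $X_n \times X_n$, hence a.e.\ on $X \times X$. Remark~\ref{r_rc} ensures each $M_{\alpha_i} p_n$ is bounded, and the bounded Stinespring of the CP map $\Phi_n$ forces $\sum_i |\alpha_i|^2 \in L^\infty(X_n)$; thus $\sum_i |\alpha_i(x)|^2 < \infty$ a.e.\ on $X$, so $a(x) := (\alpha_i(x))_i$ defines a measurable map $X \to \ell^2$ with $\varphi(x_1,x_2) = (a(x_1),a(x_2))_{\ell^2}$ a.e. The main obstacle is the automatic complete positivity step, since Lemma~\ref{pcp} is stated for unital operator systems, so some care is needed when lifting the positive map $\Phi_n$ from the non-unital $\cl K(H_n)$ before applying it; once this is in place, the rest is careful bookkeeping between the operators delivered by Theorem~\ref{repv} and the integral kernels of $S_\varphi$.
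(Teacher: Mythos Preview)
Your proposal is correct and follows essentially the same route as the paper: the sufficiency via $X_n=\{x:\|a(x)\|^2\le n\}$ and the formula $S_\varphi(T)=\sum_i M_{\alpha_i}^*TM_{\alpha_i}$, and the necessity via Theorem~\ref{repv} applied to the assembled map on $\cup_n\cl K(H_n)$, identifying the resulting $\cl D'$-affiliated operators as multiplications and reading off $\sum_i|\alpha_i|^2<\infty$ a.e.\ from the boundedness of the column $V_n$. You are in fact more careful than the paper about the complete-positivity step---the paper silently passes from ``positive, bounded and $\cl D_n$-bimodular'' to the hypotheses of Theorem~\ref{c_c2}/\ref{repv}, whereas you correctly flag that Lemma~\ref{pcp} is stated for operator systems and resolve this by first extending $\Phi_n$ normally to $\cl B(H_n)$; your workaround is valid and the rest is indeed bookkeeping.
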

\begin{proof}
We let $H_0 = \cup_{n=1}^{\infty}L^2(X_n)$.  Suppose that $\varphi$
is a positive local Schur multiplier and let $(X_n)_{n=1}^{\infty}$
be the sequence of subsets of $X$ from Definition \ref{d_iii} (ii).
We can, moreover, assume that $\mu(X_n) < \infty$. Recall that
$\mathcal{D} = \{M_f : f\in L^{\infty}(X)\}$. The projection $p_n$
from $L^2(X)$ onto $L^2(X_n), n\in\mathbb{N},$ is then given by $p_n
= M_{\chi_{X_n}}$. We identify $\mathcal{C}_2(L^2(X_n))$ with a
subspace of $\mathcal{C}_2(L^2(X_{n+1}))$ in the natural way. Let
$$S_{\varphi}:\cup_{n=1}^{\infty}\mathcal{C}_2\left(L^2(X_n)\right)\rightarrow\cup_{n=1}^{\infty}\mathcal{C}_2\left(L^2(X_n)\right)$$
be the map given by $S_{\varphi}(T_k) = T_{\chi_{X_n\times
X_n}\varphi k}$,  $k\in L^2(X_n\times X_n)$. We have that the
restriction $S_{\varphi}|_{\mathcal{C}_2(L^2(X_n))}$ is positive,
bounded and $\mathcal{D}_n$-bimodular. Hence $S_{\nph}$ satisfies
the conditions of Theorem \ref{c_c2} (or those of Theorem
\ref{repv}), and thus there exists a linear operator $V :
H_0\rightarrow(H_0)^{\infty}$ of the form $V = (M_{a_1},
M_{a_2},\dots)^t$, where $a_i\in\mathfrak{B}(X)$, $i\in\mathbb{N}$,
such that
\[S_{\varphi}(T_k)=V^*(T_k\otimes
1)V=\sum_{i=1}^{\infty}M_{a_i}^*T_kM_{a_i} \text{ for all
}T_k\in\cup_{n=1}^{\infty} \cl C_2\left(L^2(X_n)\right).\]

Fix $n\in\mathbb{N}$. We have that $\esssup_{x\in
X_n}\sum_{i=1}^{\infty}|a_i(x)|^2=\|\sum_{i=1}^{\infty}M_{|a_i\chi_{X_n}|^2}\|=\|V
p_n\|^2$. It follows that $\sum_{i=1}^{\infty}|a_i(x)|^2<\infty$ for
almost all $x\in X$. Thus the function $a : X\rightarrow \ell^2$
given by $a(x) = (a_i(x))_{i=1}^{\infty}$, $x\in X$, is well-defined
up to a null set.

Let $\psi=\sum_{i=1}^{\infty}a_i\otimes \overline{a_i}$. Then
$$T_{\nph k}=S_\nph (T_k) = \sum_{i=1}^{\infty}M_{a_i}^*T_kM_{a_i}=T_{\psi k}, \ \ k\in
L^2(X_n\times X_n), \ n\in\mathbb{N},$$ This implies $\nph=\psi$
almost everywhere on $X_n\times X_n$ and consequently
$$\varphi(x_1,x_2)=\sum_{i=1}^{\infty}a_i(x_1)\overline{a_i(x_2)} = (a(x_1),a(x_2))_{\ell^2},$$
for almost all $(x_1,x_2)\in \cup_{n=1}^{\infty}(X_n\times X_n)$,
and hence for almost all $(x_1,x_2)\in X\times X$.

Conversely, suppose that there exists a function $a : X\rightarrow
\ell^2$, say $a(x) = (a_i(x))_{i\in \bb{N}}$, $x\in X$, such that
$\varphi(x_1,x_2) = (a(x_1),a(x_2))_{\ell^2}$ almost everywhere on
$X\times X$. Let $X_n=\{x\in X : \|a(x)\|^2\leq n\}$ and observe
that $\cup_{i=1}^{\infty}X_n\sim X$. For $k\in L^2(X_n\times X_n)$,
we have that
\mbox{$S_{\varphi}(T_k)=\sum_{i=1}^{\infty}M_{a_i\chi_{X_n}}^*T_kM_{a_i\chi_{X_n}}$}
and hence $S_{\varphi}|_{\mathcal{C}_2(L^2(X_n))}$ is a bounded
positive map. Consequently, $\varphi$ is a positive local Schur
multiplier.
\end{proof}

\noindent{\bf Remark.} Let $(X,\mu)$ be a standard measure space and $\mu'$ be a measure defined on the same
$\sigma$-algebra and absolutely continuous with respect to $\mu$.
If $\varphi$ is a positive local Schur multiplier with respect to $\mu$ then it is so with respect to $\mu'$.
Indeed, this follows immediately from the representation  given in Theorem~\ref{th_lps}.

\medskip

As a corollary of Theorem \ref{th_lps}, we also note the following,
rather well-known, description of positive Schur multipliers.

\begin{corollary}\label{psmc}
A function $\varphi\in L^{\infty}(X\times X)$ is a positive Schur
multiplier if and only if there exists a measurable function $a :
X\rightarrow \ell^2$ such that $\esssup_{x\in X}\|a(x)\| < \infty$
and $\nph(x_1,x_2) = (a(x_1),a(x_2))_{\ell^2}$ almost everywhere on
$X\times X$.
\end{corollary}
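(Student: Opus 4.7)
The corollary is essentially the bounded version of Theorem~\ref{th_lps}, and both directions reduce to that result plus a straightforward identification of $\|V\|^2$ with $\esssup_{x \in X} \|a(x)\|^2$, where $V = (M_{a_1}, M_{a_2}, \dots)^t$ is the column operator produced by the Stinespring-type construction.

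For the forward implication, I would start by noting that any positive Schur multiplier $\varphi$ is in particular a positive local Schur multiplier (take the constant sequence $X_n = X$ in Definition~\ref{d_iii}(ii)), so Theorem~\ref{th_lps} supplies a measurable $a : X \to \ell^2$, $a(x) = (a_i(x))_{i \in \mathbb{N}}$, with $\varphi(x_1, x_2) = (a(x_1), a(x_2))_{\ell^2}$ almost everywhere, and with $S_{\varphi}(T_k) = \sum_{i=1}^{\infty} M_{a_i}^* T_k M_{a_i}$ for $T_k \in \mathcal{C}_2(L^2(X))$. Since $\varphi$ is a (global) Schur multiplier, $S_{\varphi}$ extends boundedly to $\mathcal{K}(L^2(X))$, and in the bounded setting the column operator $V$ produced by Theorem~\ref{c_c2} is bounded on the whole of $L^2(X)$; indeed, in the proof of Theorem~\ref{th_lps} the identity $\esssup_{x \in X_n} \sum_{i=1}^{\infty} |a_i(x)|^2 = \|V p_n\|^2$ is established, and taking $X_n = X$ for all $n$ this equals $\|V\|^2$, which is finite. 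Thus $\esssup_{x \in X} \|a(x)\|^2 = \|V\|^2 < \infty$, as required.

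For the reverse implication, given $a : X \to \ell^2$ with $M := \esssup_{x \in X} \|a(x)\|^2 < \infty$ such that $\varphi(x_1, x_2) = (a(x_1), a(x_2))_{\ell^2}$ a.e., Theorem~\ref{th_lps} already gives that $\varphi$ is a positive local Schur multiplier, so positivity of $S_{\varphi}$ on $\mathcal{C}_2(L^2(X))^+$ is automatic (the partial sums $\sum_{i=1}^{N} M_{a_i}^* T M_{a_i}$ are positive for positive $T$). It remains to verify operator-norm boundedness. For $T \in \mathcal{C}_2(L^2(X))$ with $T \geq 0$, one has
\begin{equation*}
0 \leq S_{\varphi}(T) = \sum_{i=1}^{\infty} M_{a_i}^* T M_{a_i} \leq \|T\|_{\op} \sum_{i=1}^{\infty} M_{a_i}^* M_{a_i} = \|T\|_{\op} M_{\|a(\cdot)\|^2},
\end{equation*}
so $\|S_{\varphi}(T)\|_{\op} \leq M \|T\|_{\op}$. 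A standard $2 \times 2$ decomposition of a general $T \in \mathcal{C}_2$ as a linear combination of four positive elements (or directly the estimate $\|V^*(T \otimes 1)V\|_{\op} \leq \|V\|^2 \|T\|_{\op}$ applied to $V = (M_{a_1}, M_{a_2}, \dots)^t$, which is bounded with $\|V\|^2 = M$) then gives boundedness of $S_{\varphi}$ in operator norm.

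The only mild subtlety is ensuring that the expansion $S_{\varphi}(T) = \sum_i M_{a_i}^* T M_{a_i}$ coming from Theorem~\ref{th_lps} is exactly the one governed by the column operator $V$, so that $\|V\|^2$ can be identified with $\esssup_{x \in X} \|a(x)\|^2$; this identification is already implicit in the proof of Theorem~\ref{th_lps} and so no new work is required.
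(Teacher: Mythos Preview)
Your proposal is correct and follows exactly the route the paper intends: the paper gives no separate proof for this corollary, merely presenting it as a consequence of Theorem~\ref{th_lps} (and calling it ``rather well-known''), and your argument specializes that theorem and its proof to the bounded case in the natural way. The only small wrinkle is that the proof of Theorem~\ref{th_lps} assumes $\mu(X_n)<\infty$, so if $\mu(X)=\infty$ you cannot literally set $X_n=X$; this is harmless, since either one notes that the finiteness assumption there is inessential (a cyclic vector for $\mathcal D$ exists for any $\sigma$-finite $\mu$, which is all Lemma~\ref{pcp} needs), or one keeps finite-measure $X_n\uparrow X$ and observes that $\|Vp_n\|^2\leq\|S_\varphi\|$ uniformly in $n$.
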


Let $\varphi\in L^{\infty}(X\times X)$ be a positive Schur
multiplier. Then there are potentially many functions $a :
X\rightarrow \ell^2$ satisfying the conclusion of Corollary
\ref{psmc}; we call them \emph{representing functions for $\nph$}.
For each such function, say, $a(x) = (a_i(x))_{i\in \bb{N}}$ ($x\in
X$), we have a corresponding Stinespring representation of the
completely positive map $S_{\nph}$:
\begin{equation}\label{eq_snph}
S_{\nph}(T) = \sum_{i=1}^{\infty} M_{a_i}^*TM_{a_i}, \ \ \ T\in \cl
K(L^2(X)).
\end{equation}
Let us call $a$ a \emph{minimal representing function for $\nph$} if
the representation (\ref{eq_snph}) of $S_{\nph}$ is minimal.

\begin{proposition}\label{p_minsh}
Let $\nph\in L^{\infty}(X\times X)$ be a positive Schur multiplier
and $a : X\rightarrow \ell^2$ be a representing function for $\nph$.
The following are equivalent:

(i) \ $a$ is minimal;

(ii) for each null set $M\subseteq X$, the set $\{a(x) : x\in
X\setminus M\}$ has dense linear span in $\ell^2$.
\end{proposition}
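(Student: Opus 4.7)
The plan is to apply Lemma \ref{equiv} to reduce minimality of the Stinespring representation to a condition on the multiplication operators $M_{a_i}$, and then translate that condition pointwise through the formula $\nph(x_1,x_2) = (a(x_1),a(x_2))_{\ell^2}$ into a statement about the range of $a$.

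Concretely, I would first invoke Lemma \ref{equiv} to say that (i) holds if and only if the family $\{M_{a_i}\}_{i=1}^{\infty}$ is strongly independent, i.e. for every $(\lambda_i)_{i\in\bb{N}}\in\ell^2$ the relation $\sum_{i=1}^{\infty}\lambda_i M_{a_i}=0$ forces $\lambda_i=0$ for all $i$. Since $\esssup_{x\in X}\|a(x)\|_{\ell^2}<\infty$ by Corollary \ref{psmc}, for any such $(\lambda_i)$ the Cauchy--Schwarz inequality in $\ell^2$ yields an essentially bounded function $f(x)=\sum_i\lambda_i a_i(x)$, and then $\sum_i\lambda_i M_{a_i}=M_f$. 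Consequently, $\sum_i\lambda_i M_{a_i}=0$ is equivalent to $f(x)=0$ for almost all $x$.

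Next, writing $v=(\overline{\lambda_i})_{i\in\bb{N}}\in\ell^2$, I would observe the identity $\sum_i\lambda_i a_i(x)=(a(x),v)_{\ell^2}$. Thus strong independence of $\{M_{a_i}\}$ is equivalent to the statement that there is no non-zero $v\in\ell^2$ with $(a(x),v)_{\ell^2}=0$ for almost every $x$; rephrased quantifier-first, this is exactly: for every null set $M\subseteq X$, the only $v\in\ell^2$ satisfying $(a(x),v)_{\ell^2}=0$ for all $x\in X\setminus M$ is $v=0$. The elementary Hilbert space fact that a subset of $\ell^2$ has trivial orthogonal complement if and only if its linear span is dense then turns this into condition (ii), completing the equivalence.

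The argument is essentially bookkeeping; the only place that requires care is managing the complex conjugation in the correspondence $(\lambda_i)\leftrightarrow v=(\overline{\lambda_i})$, and ensuring that the quantifier ``for every null set $M$'' on the geometric side corresponds cleanly to the ``almost everywhere'' quantifier that arises from the vanishing of $M_f$. I do not expect any serious obstacle beyond this.
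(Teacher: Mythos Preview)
Your proposal is correct and follows essentially the same route as the paper's own proof: invoke Lemma~\ref{equiv} to reduce minimality to strong independence of $\{M_{a_i}\}$, translate $\sum_i\lambda_i M_{a_i}=0$ into the pointwise vanishing of $\sum_i\lambda_i a_i(x)$ almost everywhere, and then reinterpret this as $(\overline{\lambda_i})_i$ lying in the orthogonal complement of $\{a(x):x\in X\setminus M\}$ for some null set $M$. The paper argues via the contrapositive in a single chain of equivalences and is slightly less explicit about the conjugation and the quantifier exchange you flag, but the content is identical.
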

\begin{proof}
Suppose that $a_i$, $i\in \bb{N}$, are the coordinate functions of
$a$. By Lemma \ref{equiv}, $a$ is not a minimal representing
function for $\nph$ if and only if $\{M_{a_i}\}_{i\in \bb{N}}$ is
not strongly independent, if and only if there exists $0 \neq
(\lambda_i)_{i\in \bb{N}} \in \ell^2$ such that $\sum_{i=1}^{\infty}
\lambda_i M_{a_i} = 0$, if and only if there exists a null set
$M\subseteq X$ such that $\sum_{i=1}^{\infty} \lambda_i a_i(x) = 0$ for
all $x\in X\setminus M$, if and only if there exists a null set
$M\subseteq X$ such that the linear span of $\{a(x) : x\in
X\setminus M\}$ is not dense in $\ell^2$.
\end{proof}

The following lifting result for positive Schur multipliers now
follows from Theorem \ref{p_compl2}.

\begin{corollary}\label{c_mfa}
Let $\nph \in L^{\infty}(X\times X)$ be a positive Schur multiplier,
and let $Y\subseteq X$ be a measurable subset. Suppose that $a :
Y\rightarrow\ell^2$ is a minimal representing function for
$\nph|_{Y\times Y}$. Then there exists a minimal representing
function $b : X\rightarrow \ell^2 \oplus \ell^2$ for $\nph$ such
that $b(x) = a(x) \oplus 0$ for all $x\in Y$.
\end{corollary}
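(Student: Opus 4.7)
The plan is to recognise the statement as a direct translation, through the dictionary furnished by Theorem~\ref{th_lps} and Proposition~\ref{p_minsh}, of the lifting theorem for minimal Stinespring representations (Theorem~\ref{p_compl2}) applied to $\Phi := S_\varphi$ on $\cl{K}(L^2(X))$.

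First I would set $H_2 = L^2(X)$, $H_1 = L^2(Y)$, $p = M_{\chi_Y}$, and take $\cl{D}_2 = \{M_f : f \in L^\infty(X)\} \subseteq \cl{B}(H_2)$. This is a MASA, so $\cl{D}_2' = \cl{D}_2$, and $\cl{D}_1 := p\cl{D}_2 p$, naturally identified with $L^\infty(Y)$ acting on $H_1$, is also a MASA with $\cl{D}_1' = \cl{D}_1$. By Corollary~\ref{psmc}, $\Phi$ is completely positive, and it is manifestly $\cl{D}_2$-bimodular; its restriction to $\cl{K}(H_1)$ is $\Psi := S_{\varphi|_{Y \times Y}}$. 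Writing $a(y) = (a_i(y))_{i=1}^\infty$, the column $V := (M_{a_1}, M_{a_2}, \dots)^t$ lies in $M_{\beta,1}(\cl{D}_1')$ and satisfies $\Psi(x) = V^*(x \otimes 1_\beta) V$; Proposition~\ref{p_minsh} together with Lemma~\ref{equiv} confirms that this Stinespring representation is minimal, so $V$ fulfils the hypothesis of Theorem~\ref{p_compl2}.

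I would then invoke Theorem~\ref{p_compl2} to produce an at most countable cardinal $\gamma \geq \beta$ and a column $W \in M_{\gamma,1}(\cl{D}_2')$ which implements a minimal representation of $\Phi$ and satisfies $W|_{H_1} = V$ under the inclusion $H_1^\beta \oplus 0 \subseteq H_1^\gamma$. Since $\cl{D}_2' = \cl{D}_2$, each entry of $W$ has the form $M_{b_i}$ for some $b_i \in L^\infty(X)$, and the operator-norm boundedness of $W$ yields $\esssup_{x \in X} \sum_i |b_i(x)|^2 < \infty$. Defining $b(x) = (b_i(x))_i$, the Stinespring identity $\Phi(T_k) = \sum_i M_{b_i}^* T_k M_{b_i}$ unfolds, exactly as in the forward direction of Theorem~\ref{th_lps}, into $\varphi(x_1, x_2) = (b(x_1), b(x_2))_{\ell^2_\gamma}$ almost everywhere; Proposition~\ref{p_minsh} then converts minimality of the representation into minimality of $b$ as a representing function.

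To finish, I would split $\ell^2_\gamma$ into the first $\beta$ coordinates and the remaining $\gamma - \beta$ coordinates, embedding each block isometrically into one copy of $\ell^2$ so that $b$ becomes a function $X \to \ell^2 \oplus \ell^2$. The relation $W|_{H_1} = V$ then reads $b_i(x) = a_i(x)$ for $x \in Y$ and $i \leq \beta$, while $b_i(x) = 0$ for $x \in Y$ and $i > \beta$; that is, $b(x) = a(x) \oplus 0$ on $Y$. I expect the only subtlety to be keeping the MASA commutant identification $\cl{D}_k' = \cl{D}_k$ visible throughout, so that the lifted entries of $W$ are guaranteed to be multiplication operators and can be re-assembled pointwise into the desired $\ell^2$-valued $b$; once that is in hand, the remainder is bookkeeping.
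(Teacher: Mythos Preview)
Your proposal is correct and is precisely the argument the paper intends: the paper offers no proof beyond the sentence ``now follows from Theorem~\ref{p_compl2}'', and you have supplied exactly the translation through which that theorem applies --- taking $H_2 = L^2(X)$, $H_1 = L^2(Y)$, $\cl D_2$ the multiplication masa (so $\cl D_2' = \cl D_2$), recognising $V = (M_{a_i})_i$ as the minimal implementing column for $\Psi = S_{\varphi|_{Y\times Y}}$, and reading the lifted column $W$ back as multiplication operators to obtain $b$. The only cosmetic point is that the equalities $b_i|_Y = a_i$ and $b_i|_Y = 0$ hold a priori almost everywhere, so one adjusts $b$ on a null set to get the stated pointwise conclusion on $Y$.
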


Let $X$ be a set and $\varphi : X\times X \rightarrow
\bb{C}$ be a function. We recall that $\nph$ is called \emph{positive definite}
if $(\nph(x_i,x_j))_{i,j = 1}^N$ is a positive
matrix for all $x_1,x_2,\dots,x_N \in X$ and all $N\in \bb{N}$.
In the proof of the following proposition, we will use the
following well-known fact:
if $X$ is a locally compact
Hausdorff space, $\mu$ is a regular Borel measure on $X$, $K\subseteq X$ is compact and $k\in
L^2(X\times X)$ is continuous and positive definite on $K\times K$ then the (Hilbert-Schmidt)
operator $T_k$ is positive on $L^2(X,\mu)$.

\medskip

The motivation behind part (ii) of the next theorem is \cite[Theorem 9.3]{ks2006},
where a relation between $\omega$-continuous measurable Schur multipliers and
classical Schur multipliers is established.

\begin{theorem}\label{c_pd}
Let $\varphi\in\mathfrak{B}(X\times X)$.

(i) \ $\varphi$ is a positive local
Schur multiplier if and only if $\varphi$ is a local Schur
multiplier and $\varphi$ is equivalent to a positive definite
function.

(ii) Suppose that $\varphi$ is $\omega$-continuous.
Then $\varphi$ is a positive local Schur multiplier if and only if there exist a null set $X_0$
and an increasing sequence $\{Y_k\}$ of measurable subsets of $X$ such that $X\setminus X_0=\cup_{k=1}^\infty Y_k$
and $\varphi|_{Y_k\times Y_k}$ is a positive Schur multiplier with respect to the counting measure on $Y_k$, for every $k$.
\end{theorem}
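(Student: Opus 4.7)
The plan is to handle the two parts in sequence, with part (i) providing the main analytical input. For (i), the forward direction is immediate from Theorem \ref{th_lps}: the representation $\varphi(x_1,x_2) = (a(x_1),a(x_2))_{\ell^2}$ exhibits $\varphi$ as almost everywhere equal to a positive definite function, and every positive local Schur multiplier is by definition a local Schur multiplier. For the backward direction, after modifying $\varphi$ on a $\mu\times\mu$-null set I may assume $\varphi$ is itself pointwise positive definite, and Proposition \ref{p_charin} then yields an increasing sequence $(X_n)$ of measurable subsets covering $X$ up to a null set with each $\varphi|_{X_n\times X_n}$ a bounded Schur multiplier. The heart of the argument is the auxiliary claim that a bounded, measurable, pointwise positive definite function $\psi$ on $Y\times Y$ which is already a Schur multiplier is automatically a positive Schur multiplier. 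To prove this, it suffices to check $S_\psi(\xi\otimes\overline{\xi}) \geq 0$ for each $\xi \in L^2(Y)$, since such rank-one positives densely span $\cl C_2(L^2(Y))^+$; this reduces to the scalar inequality
\[
\int\!\!\int \psi(x,y)\,g(x)\overline{g(y)}\,d\mu(x)\,d\mu(y) \geq 0
\]
for $g = \overline{\xi}\eta \in L^1(Y)$, which I establish by approximating $g$ by simple functions $\sum_i c_i\chi_{A_i}$ and observing that the matrix of averages $\bigl(\int_{A_i\times A_j}\psi\,d(\mu\otimes\mu)\bigr)_{i,j}$ inherits positivity from the pointwise positive definiteness of $\psi$.

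For part (ii), the forward direction begins with the representation $\varphi(x_1,x_2) = \sum_i a_i(x_1)\overline{a_i(x_2)}$ obtained from Theorem \ref{th_lps}. Using $\omega$-continuity, I choose a version of $\varphi$ equal to $(a(x_1),a(x_2))_{\ell^2}$ outside a marginally null set, so that after removing a $\mu$-null set $X_0$ the identity holds pointwise on $(X\setminus X_0)^2$. Setting $Y_k = \{x \in X\setminus X_0 : \sum_i |a_i(x)|^2 \leq k\}$ gives an increasing measurable cover of $X\setminus X_0$ on which $\varphi$ is pointwise of the required form with norm bounded by $k$, hence $\varphi|_{Y_k\times Y_k}$ is a positive Schur multiplier with respect to counting measure. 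For the backward direction, the hypothesis yields for each $k$ a pointwise representation $\varphi(y,y') = (a_k(y),a_k(y'))_{\ell^2}$ with $\sup_{y\in Y_k}\|a_k(y)\|^2 < \infty$, so $\varphi$ is bounded and pointwise positive definite on each $Y_k\times Y_k$. Combining the measurability of $\varphi$ with Aronszajn's reproducing-kernel construction and the separability of $(Y_k,\mu)$, I produce a $\mu$-measurable $\tilde a_k \colon Y_k \to \ell^2$ implementing the same identity with $\esssup_{y\in Y_k}\|\tilde a_k(y)\|^2 < \infty$; Corollary \ref{psmc} then exhibits $\varphi|_{Y_k\times Y_k}$ as a positive Schur multiplier with respect to $\mu$, and Definition \ref{d_iii}(ii) completes the proof.

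The principal obstacle is the auxiliary claim in (i): without any continuity of $\psi$, bridging pointwise positive definiteness and operator positivity of $S_\psi$ requires a careful measure-theoretic approximation, reducing matters to the classical Schur product theorem applied to averaged finite matrices. A secondary technical point is the $\omega$-continuity step in the forward direction of (ii), needed to upgrade the almost-everywhere representation of $\varphi$ to a pointwise one on the sets $Y_k$, and in the backward direction of (ii), extracting a $\mu$-measurable representing map from purely pointwise (counting-measure) data via the Aronszajn-RKHS construction.
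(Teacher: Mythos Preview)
Your forward directions in both (i) and (ii) match the paper's arguments. The backward direction of (i), however, contains a genuine gap.

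You reduce the problem to showing that the matrix $\bigl(\int_{A_i\times A_j}\psi\,d(\mu\otimes\mu)\bigr)_{i,j}$ is positive semidefinite whenever $\psi$ is pointwise positive definite, and you assert that this ``inherits positivity'' from $\psi$, invoking the Schur product theorem. But this inheritance is precisely the content of the claim and is not justified: the averaged matrix is not a Schur product of positive matrices, nor is it an average of the pointwise matrices $(\psi(x_i,x_j))_{i,j}$ over a common choice of points. A Riemann-sum argument would work for continuous $\psi$, but for a merely measurable $\psi$ there is no reason the Riemann sums converge. One can rescue the step via an i.i.d.\ sampling argument (draw $m$ independent points from each $A_i$ and let $m\to\infty$), but this requires $x\mapsto\psi(x,x)$ to be locally integrable, and after your modification of $\varphi$ on a product-null set you have no control over the diagonal values: the set where the essentially bounded $\psi$ exceeds its essential supremum may well contain the diagonal. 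So as written, the argument does not go through.

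The paper takes a different route that sidesteps this difficulty entirely. It uses that every local Schur multiplier is equivalent to an $\omega$-continuous function \cite[Proposition~3.2]{stt2011}, and then applies a Lusin-type theorem for $\omega$-continuous functions \cite[Theorem~8.3]{ks2006} together with the ordinary Lusin theorem to find, for each $\epsilon>0$, a compact set $K_\epsilon\times K_\epsilon$ (of nearly full measure in $X_n\times X_n$) on which $\varphi$ and the test functions are genuinely continuous. On that compact set the classical fact that a continuous positive definite kernel yields a positive integral operator applies, and one lets $\epsilon\to 0$.

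For the backward direction of (ii), your Aronszajn--RKHS construction is considerably heavier than what the paper does, and the measurability of the representing map $\tilde a_k$ that you need is itself a non-trivial issue you have not resolved. The paper simply observes that the hypothesis makes $\varphi$ positive definite on $(X\setminus X_0)\times(X\setminus X_0)$, invokes \cite[Theorem~9.3]{ks2006} to pass from counting-measure Schur multipliers to $\mu$-Schur multipliers on each $Y_k$ (using the assumed $\omega$-continuity), and then applies part (i).
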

\begin{proof}
(i) Suppose that $\varphi$ is a positive local Schur multiplier.  By
Theorem \ref{th_lps}, there exists a measurable function $a :
X\rightarrow \ell^2$ such that $\varphi(x_1,x_2) =
(a(x_1),a(x_2))_{\ell^2}$ almost everywhere on $X\times X$. On the
other hand, a straightforward verification shows that the function
$(x_1,x_2)\rightarrow (a(x_1),a(x_2))_{\ell^2}$ is positive
definite.

Conversely, suppose that $\varphi$ is a local Schur multiplier and a
positive definite function. We may assume that $X$ is a
$\sigma$-compact metric space and $\mu$ is a regular Borel measure.
Let $(X_n)_{n\in\mathbb{N}}$ be the increasing sequence of
measurable sets arising from Proposition \ref{p_charin}. Let $k =
\sum_{i=1}^m f_i\otimes \overline{f_i}$ for some $f_1,\dots,f_m \in
L^2(X_n)$, so that the operator $T_k$ is a positive and of finite
rank. Fix $\epsilon > 0$. A successive application of Lusin's
Theorem shows that there exists a compact subset $K_{\epsilon}\subseteq X_n$,
whose complement in $X_n$ has measure less than $\epsilon$, such that
$f_i|_{K_{\epsilon}}$ is continuous, for each $i = 1,\dots,m$. On
the other hand, by \cite[Proposition 3.2]{stt2011}, $\nph$ is
$\omega$-continuous and by Lusin's Theorem for $\omega$-continuous
functions \cite[Theorem 8.3]{ks2006}, there exists a compact set
$L_{\epsilon}\subseteq X$, whose complement has measure less than
$\epsilon$, such that $\nph$ is continuous on $L_{\epsilon}\times
L_{\epsilon}$. It follows that $\nph k$ is positive definite on
$(K_{\epsilon}\cap L_{\epsilon})\times (K_{\epsilon}\cap
L_{\epsilon})$, and by the result stated before the statement of the
theorem, $P_{\epsilon}T_{\nph k}P_{\epsilon}\geq 0$, where
$P_{\epsilon}$ is the projection of multiplication by the
characteristic function of $K_{\epsilon}\cap L_{\epsilon}$.
Letting $\epsilon$ tend to zero, we get that $T_{\nph k} \geq 0$.
Since $\nph|_{X_n\times X_n}$ is a Schur multiplier, it follows that
$\nph|_{X_n\times X_n}$ is a positive Schur multiplier.
Thus, $\nph$ is a positive local Schur multiplier.

(ii) Let $a:X\to \ell^2$ be a function defined almost everywhere on
$X$ such that $\varphi(x,y)=(a(x),a(y))_{\ell^2}$ almost everywhere.
Since both functions in the last equality are $\omega$-continuous,
we have that they are equal marginally almost everywhere. Hence
there exists a null set $X_0$ such that $\varphi$ is positive
definite on $(X\setminus X_0)\times (X\setminus X_0)$. Letting
$Y_k=\{x\in X\setminus X_0: \|a(x)\|_{\ell^2}\leq k\}$ we obtain the
result. The converse follows from part (i) and \cite[Theorem 9.3]{ks2006}.
\end{proof}

\begin{example}\rm
Let $\nph(x,y)=1/(x+y)$, $x,y\in{\mathbb R^+}$, $(x,y)\ne (0,0)$.
Then $\nph\in{\mathfrak B}(\mathbb R^+\times\mathbb R^+,
\lambda\times \lambda)$, where $\lambda$ is the Lebesque measure. We
have $$\nph(x,y)=\int_0^{+\infty}e^{-sx}e^{-sy}ds=(e^{-\cdot x},e^{-
\cdot y})_{L^2(\mathbb R^+)}$$ and $\|e^{-sx}\|^2_{L^2(\mathbb
R^+)}=1/(2x)$. Expressing the function $e^{-\cdot x}$ in terms of an
orthonormal basis of $L^2(\mathbb R^+)$, we can find a measurable
function $a:\mathbb R^+\to \ell^2$ such that $\nph(x,y) =
(a(x),a(y))_{\ell^2}$ almost everywhere and $\|a(x)\|_{\ell^2} <
\infty$ for almost all $x\in \mathbb R^+$. Hence $\nph(x,y)$ is a
positive local Schur multiplier. It is not a Schur multiplier since
$\nph\notin  L^{\infty}(\mathbb R^+\times \mathbb R^+)$.
\end{example}

The previous example is taken from the theory of operator monotone
functions. It is known that if a function $f$ is continuously
differentiable on an interval $(a,b)$ then $f$ is operator monotone
on $(a,b)$ if and only if the divided difference $\check{f}$ given
by $\check f(x,y) = (f(x)-f(y))/(x-y)$, $x\neq y$ and $\check f(x,y)
= f'(x)$, $x\in (a,b)$, is positive definite on $(a,b)\times (a,b)$
(see, for example, \cite{hp}, where a proof of this fact is given
using Schur multiplier techniques). Operator monotonicity is related
to positivity of local Schur multipliers in the following way.

\begin{proposition}\label{p_lpsd}
Let $f : (a,b)\rightarrow \bb{C}$ be a continuously differentiable function. The
divided difference $\check{f}$ is a positive local Schur multiplier
on $(a,b)\times (a,b)$ with respect to any choice of a standard Borel measure on $(a,b)$ if and only if $f$ is operator monotone.
\end{proposition}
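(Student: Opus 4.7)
The plan is to exploit Theorem~\ref{th_lps} and Corollary~\ref{psmc} as a bridge between positive local Schur multipliers and factorisations $\check{f}(x,y)=(a(x),a(y))_{\ell^2}$, together with the classical theorem recalled in the paragraph preceding the statement (see \cite{hp}) that $f$ is operator monotone on $(a,b)$ precisely when $\check{f}$ is positive definite on $(a,b)\times(a,b)$.

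For the forward direction, I would specialise the hypothesis to Lebesgue measure $\lambda$ on $(a,b)$. Theorem~\ref{th_lps} then supplies a measurable function $a:(a,b)\to\ell^2$ such that $\check{f}(x,y)=(a(x),a(y))_{\ell^2}$ off a $\lambda\times\lambda$-null set $E$. A standard Fubini argument applied to $E$ produces, for each $N$, a subset of $(a,b)^N$ of full Lebesgue measure on which the matrix $(\check{f}(x_i,x_j))_{i,j=1}^N$ equals a Gram matrix and is hence positive semidefinite. Since $f\in C^1$, the kernel $\check{f}$ is continuous, so the smallest eigenvalue of $(\check{f}(x_i,x_j))_{i,j=1}^N$ depends continuously on $(x_1,\dots,x_N)$; being non-negative on a dense subset of $(a,b)^N$, it is non-negative everywhere. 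Hence $\check{f}$ is positive definite on $(a,b)\times(a,b)$, and the classical characterisation delivers operator monotonicity of $f$.

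For the converse, assume $f$ is operator monotone. Then $\check{f}$ is positive definite and, since $f\in C^1$, also continuous on $(a,b)\times(a,b)$. I would apply the Moore--Aronszajn construction to the continuous positive definite kernel $\check{f}$ on the separable space $(a,b)$: this produces a separable reproducing kernel Hilbert space, which I identify with $\ell^2$, and a continuous function $a:(a,b)\to\ell^2$ with $\check{f}(x,y)=(a(x),a(y))_{\ell^2}$ for all $x,y$; in particular $\|a(x)\|^2=\check{f}(x,x)=f'(x)$. Given an arbitrary standard Borel measure $\mu$ on $(a,b)$, set $X_n=\{x\in(a,b):f'(x)\le n\}$. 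Each $X_n$ is closed in $(a,b)$ by continuity of $f'$, and $\bigcup_n X_n=(a,b)$ since $f'$ is everywhere finite, so in particular $\mu\bigl((a,b)\setminus\bigcup_n X_n\bigr)=0$. On $X_n$ the function $a$ is bounded in norm by $\sqrt{n}$, whence $\check{f}|_{X_n\times X_n}\in L^\infty(X_n\times X_n)$ and Corollary~\ref{psmc} shows that $\check{f}|_{X_n\times X_n}$ is a positive Schur multiplier with respect to $\mu|_{X_n}$. Definition~\ref{d_iii}(ii) then gives the conclusion.

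The only nontrivial content lies in producing the continuous $\ell^2$-valued factorisation $a$ of $\check{f}$ in the converse; once this is in hand, the exhausting sets $X_n$ work uniformly in the choice of $\mu$. The forward implication is essentially bookkeeping with Theorem~\ref{th_lps} together with a dense-set/continuity argument to promote almost-everywhere positive semidefiniteness to genuine positive definiteness.
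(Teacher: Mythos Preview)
Your converse direction is correct and is more direct than the paper's: where the paper invokes \cite[Theorem 9.3]{ks2006} (to pass from counting-measure to $\mu$-measure Schur multipliers) together with Theorem~\ref{c_pd}, you go straight through the Moore--Aronszajn factorisation and Corollary~\ref{psmc}, with the pleasant side effect that the exhausting sets $X_n$ are uniform in $\mu$.

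In the forward direction, however, your ``standard Fubini argument'' has a genuine gap. From $\check f(x,y)=(a(x),a(y))_{\ell^2}$ for $\lambda\times\lambda$-almost every $(x,y)$, Fubini only controls the off-diagonal entries: for almost every $(x_1,\dots,x_N)$ (with distinct coordinates) one obtains $\check f(x_i,x_j)=(a(x_i),a(x_j))_{\ell^2}$ for $i\neq j$, but the diagonal $\{x=y\}$ is product-null, so nothing prevents $\check f(x_i,x_i)=f'(x_i)$ from differing from $\|a(x_i)\|^2$ for \emph{every} $x_i$. Thus you have not shown that $(\check f(x_i,x_j))$ equals a Gram matrix on any set of tuples, and positive semidefiniteness is not yet established. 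A repair within your framework: the positive-Schur-multiplier property of $\check f$ on $X_n$ (with $\lambda(X_n)<\infty$) yields
\[
\iint_{X_n\times X_n}\check f(x,y)\,g(x)\overline{g(y)}\,d\lambda(x)\,d\lambda(y)\ \ge\ 0\qquad\text{for all }g\in L^2(X_n)
\]
(apply $S_{\check f}$ to the rank-one positive operator with kernel $g\otimes\bar g$ and evaluate at $\chi_{X_n}$); since $\check f$ is continuous and $\bigcup_n X_n$ is $\lambda$-conull, hence dense in $(a,b)$, approximating point masses by $L^2$ bumps gives positivity of $(\check f(x_i,x_j))$ for all tuples. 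The paper sidesteps the issue entirely by exploiting the full force of the hypothesis: it tests $\check f$ against the counting measure supported on an arbitrary finite set $F\subset(a,b)$, for which the positive-Schur-multiplier condition \emph{directly} yields positivity of the matrix $(\check f(x,y))_{x,y\in F}$.
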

\begin{proof}
Suppose that
$\check{f}$ is a positive local Schur multiplier on $(a,b)\times (a,b)$ with respect to
every standard Borel measure. Let $F\subseteq (a,b)$ be a finite set and let $\mu_F$ be the measure
given by $\mu_F(\alpha) = |\alpha\cap F|$ for a Borel set $\alpha$. Our assumption implies that
there exists a Borel set $Y\subseteq (a,b)$ with $F\subseteq Y$ such that $\check{f}|_{Y\times Y}$ is a
positive Schur multiplier with respect to $\mu_F$. It follows that $\check{f}|_{F\times F}$ is a positive Schur multiplier
(with respect to $\mu_F$), and hence $\check{f}|_{F\times F}$ is a positive matrix. Since this is true for all finite sets $F\subseteq (a,b)$,
we have that $\check{f}$ is a positive definite function. By
\cite{hp}, $f$ is operator monotone.

Conversely, suppose that $f$ is operator monotone and let $\mu$ be a
standard Borel measure on $(a,b)$; by \cite{hp}, $\check{f}$ is
positive definite. Let $U_n = \{x\in (a,b) : f'(x) < n\}$; then
$\cup_{n\in \bb{N}} U_n = (a,b)$. Let $n\in \bb{N}$ and $F\subseteq
U_n$ be a finite subset. Since $\check{f}|_{F\times F}$  is a
positive matrix, the norm of its corresponding Schur multiplication
is bounded by $\max_{x\in F} f'(x)$, which does not exceed $n$. It
follows that $\check{f}|_{U_n\times U_n}$ is a Schur multiplier with
respect to the counting measure. By \cite[Theorem 9.3]{ks2006},
$\check{f}|_{U_n\times U_n}$ is a Schur multiplier with respect to
$\mu$. Hence, $\check{f}$ is a local Schur multiplier with respect
to $\mu$. Now Theorem \ref{c_pd} shows that $\check{f}$ is a
positive local Schur multiplier with respect to $\mu$.
\end{proof}

\subsection{Positive Multipliers of Toeplitz type}

We conclude this section by considering positive multipliers of
Toeplitz type. Let $G$ be a locally compact group equipped with left
Haar measure and $N$ be the map sending a measurable function
$f:G\rightarrow\mathbb{C}$ to the function $Nf:G\times
G\rightarrow\mathbb{C}$ given by $Nf(s,t) = f(st^{-1})$; we call the
functions of the form $Nf$ \emph{functions of Toeplitz type}. It was shown
in \cite{bf} that if $f\in L^{\infty}(G)$ then $Nf$ is a Schur
multiplier if and only if $f$ is equivalent to an element of
$M^{\cb}A(G)$ (the latter being the set of all completely bounded
multipliers of the Fourier algebra $A(G)$ of $G$). On the other
hand, it was proved in \cite{stt2011} that if $G$ is abelian then
$Nf$ is a local Schur
multiplier if and only if $f$ is equivalent to a function that
belongs locally to $A(G)$ at every point of the group $G$.
In particular, examples were given of local Schur multipliers $\nph$ of
Toeplitz type that are not Schur multipliers. The following
proposition shows that this cannot happen with the additional
assumption that $\nph$ be positive, provided $G$ is amenable.

\begin{theorem}\label{th_toe}
Let $G$ be an amenable locally
compact group, $f : G\rightarrow \bb{C}$ be a measurable function
and $\nph = Nf$. The following are equivalent:

(i) \ $\nph$ is a positive Schur multiplier;

(ii) $\nph$ is a positive local Schur multiplier;

(iii) $f$ is equivalent to a positive definite function from $B(G)$.
\end{theorem}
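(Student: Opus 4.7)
I would first dispose of (i) $\Rightarrow$ (ii), which is immediate by taking $X_n = G$ for every $n$ in Definition \ref{d_iii} (ii). For (iii) $\Rightarrow$ (i), given $f$ equivalent to a positive definite function in $B(G)$, the kernel $(s,t) \mapsto f(st^{-1})$ on $G \times G$ is positive definite; its reproducing kernel Hilbert space realization produces a separable Hilbert space $\cl H$ and a map $a : G \to \cl H$ with $(a(s), a(t))_{\cl H} = f(st^{-1})$ and $\|a(s)\|^2 = f(e)$ constant. Identifying $\cl H$ with a closed subspace of $\ell^2$ and applying Corollary \ref{psmc}, $Nf$ is a positive Schur multiplier.

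For the main direction (ii) $\Rightarrow$ (iii), I would start by applying Theorem \ref{th_lps} to obtain a measurable function $a : G \to \ell^2$ with $f(st^{-1}) = (a(s), a(t))_{\ell^2}$ for almost every $(s,t) \in G \times G$. The right-translation invariance of this kernel suggests constructing a unitary representation $V : G \to \cl B(\cl H)$, with $\cl H$ the closed linear span of $\{a(s) : s \in G\}$ in $\ell^2$, via the prescription $V(r) a(s) = a(sr^{-1})$. Well-definedness and isometry follow from the computation
\[\Big\|\sum_i c_i a(s_i r^{-1})\Big\|^2 = \sum_{i,j} c_i \bar c_j f(s_i s_j^{-1}) = \Big\|\sum_i c_i a(s_i)\Big\|^2,\]
while measurability of $a$ together with Mackey's automatic continuity theorem on separable Hilbert spaces yields strong continuity of $V$. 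Fixing a generic point $s_0$ and setting $\xi_0 = a(s_0) \in \cl H$, one computes, for almost every $r \in G$,
\[(V(r)\xi_0, \xi_0) = (a(s_0 r^{-1}), a(s_0)) = f(s_0 r^{-1} s_0^{-1}),\]
exhibiting $r \mapsto f(s_0 r^{-1} s_0^{-1})$ as (almost-everywhere equal to) a continuous positive definite function, hence as an element of $B(G)$. Since the map $r \mapsto s_0 r^{-1} s_0^{-1}$ is an anti-automorphism of $G$ (composing group inversion with an inner automorphism), both of which preserve $B(G)$ and the cone of positive definite functions, $f$ must itself be equivalent to a positive definite element of $B(G)$.

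The hard part will be legitimising the construction of $V$: the kernel identity $(a(s), a(t)) = f(st^{-1})$ holds only on a full-measure subset of $G \times G$, whereas the isometry computation above requires it for all pairs from a finite collection of points of the form $s_i r^{-1}$ --- a measure-zero slice of $G \times G$ for fixed $r$. Reconciling this requires either a Fubini argument on the tuple space $G^n$ to locate a full-measure set on which the kernel identity holds pairwise, followed by a density argument to extend to all $r \in G$, or a more flexible regularisation procedure. Amenability is expected to enter precisely here: one can instead work with mollified vectors $\xi_\phi = \int \phi(s)\, a(s)\, ds$ for $\phi$ ranging over a bounded approximate identity in $A(G)$ (available precisely because $G$ is amenable), producing a bounded net $r \mapsto (V(r)\xi_\phi, \xi_\phi)$ of matrix coefficients in the unit ball of $B(G)$ that weak* converges to $f$ as a functional on $C^*(G)$. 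By Banach--Alaoglu the limit lies in $B(G)$, and positivity and positive-definiteness are preserved in the limit, yielding the required conclusion.
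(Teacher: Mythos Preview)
Your treatment of (i)$\Rightarrow$(ii) is fine, and your (iii)$\Rightarrow$(i) via a GNS/RKHS realisation of the positive definite function and Corollary~\ref{psmc} is actually cleaner than the paper's route through \cite{bf}.

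The substantive issue is (ii)$\Rightarrow$(iii). Your primary plan --- build a unitary representation $V(r)$ by $V(r)a(s)=a(sr^{-1})$ and read off a matrix coefficient --- does not go through as written, and the difficulty you flag is genuine and not repaired by either of your suggested fixes. The isometry computation for $V(r)$ requires the kernel identity $(a(s),a(t))=f(st^{-1})$ on \emph{specific} finite tuples for \emph{each} $r$, and a Fubini argument on $G^n\times G$ only delivers this for almost every $r$ and almost every tuple; you cannot pin down a single dense set of vectors on which $V(r)$ is well defined for all $r$, and Mackey-type automatic continuity does not apply to an a.e.-defined map. Your mollification paragraph still invokes the undefined $V$ in the expression $(V(r)\xi_\phi,\xi_\phi)$, so it does not stand on its own.

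The paper avoids this entirely by a different reduction. First it uses \cite[Corollary 4.5 and Proposition 7.3]{stt2011} to replace $f$ by a \emph{continuous} function; this is the step you are missing, and it is what dissolves the a.e.\ obstruction. Then, rather than building a representation, it works with the contraction $P:T(G)\to A(G)$, $P(\xi\otimes\eta)(s)=(\lambda_s\xi,\bar\eta)$: choosing (by amenability, via \cite[Lemma~7.2]{lau}) positive definite functions $v_\alpha=P(\eta_\alpha\otimes\overline{\eta_\alpha})\to 1$ uniformly on compacta with $\eta_\alpha$ supported in some $X_n$, positivity of $S_\nph$ on $X_n\times X_n$ gives $Nf\cdot(\eta_\alpha\otimes\overline{\eta_\alpha})\in T(G)^+$, hence $fv_\alpha\in A(G)^+$; passing to the limit on finite sets shows $f$ is positive definite, and continuity then gives $f\in B(G)$. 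Your mollification idea is morally the same manoeuvre, but to make it rigorous you should drop $V$ altogether, first reduce to continuous $f$, and argue directly with $P$ and the positivity of the Schur map on the $X_n$.
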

\begin{proof}
(i)$\Rightarrow$(ii) is trivial.

(ii)$\Rightarrow$(iii)
By \cite[Corollary 4.5]{stt2011}, $\nph$ is equivalent to an
$\omega$-continuous function. By \cite[Proposition 7.3]{stt2011},
$f$ is equivalent to a continuous function $g : G\rightarrow
\bb{C}$. We may thus assume that $f$ is itself continuous.

Let $T(G) =
L^2(G)\hat{\otimes}L^2(G)$, where by $\hat{\otimes}$ we denote the
projective tensor product. The space $T(G)$ can be naturally
identified with the trace class on $L^2(G)$. We let $T(G)^+$ be the
cone in $T(G)$ corresponding to the positive trace class operators
under this identification; we have that $T(G)^+$ consists of all
elements of the form $\sum_{i=1}^{\infty} \xi_i\otimes \overline{\xi_i}$, with
$\sum_{i=1}^{\infty} \|\xi_i\|^2_2 < \infty$.
Let $P : T(G) \rightarrow A(G)$ be the contraction given by
$P(\xi\otimes\eta)(s) = (\lambda_s\xi,\overline{\eta})$, $\xi,\eta\in L^2(G)$.

Since $Nf$ is a positive local Schur multiplier, there exists an
increasing sequence $(X_n)_{n\in \bb{N}}$ of measurable subsets of
$G$ such that the set $G\setminus (\cup_{n\in\mathbb{N}}X_n)$ is null and $Nf|_{X_n\times X_n}$
is a positive Schur multiplier. Clearly, $\cup_{n\in \bb{N}}
L^2(X_n)$ is dense in $L^2(G)$. Since $G$ is amenable,
\cite[Lemma~7.2]{lau} shows that there exists a net
$(u_{\alpha})_{\alpha}$, with $u_{\alpha} = P(\xi_{\alpha}\otimes
\overline{\xi_{\alpha}})$, $\|\xi_{\alpha}\|\leq 1$,
which converges to the constant function
$1$ uniformly on compact subsets. Since $P$ is contractive and  the
uniform norm is dominated by the norm of $A(G)$, we can replace
$u_{\alpha}$ by a function of the form $v_{\alpha} =
P(\eta_{\alpha}\otimes\overline{\eta_{\alpha}})$, with
$\eta_{\alpha}$ having support in some $X_n$, $n\in \bb{N}$.

We have that $(Nf)(\eta_{\alpha}\otimes\overline{\eta_{\alpha}})\in T(G)^+$ for each $\alpha$. Applying the mapping $P$, we obtain
that $fv_{\alpha}\in A(G)^+$ for each $\alpha$.
Let $K = \{s_1,\dots,s_n\} \subseteq G$. We have that
$$(f(s_i s_j^{-1})v_{\alpha}(s_i s_j^{-1}))_{i,j}
\rightarrow_{\alpha} (f(s_i s_j^{-1}))_{i,j}.$$ Since the matrix
$(f(s_i s_j^{-1})v_{\alpha}(s_i s_j^{-1}))_{i,j}$ is positive for
each $\alpha$, it follows that $(f(s_i s_j^{-1}))_{i,j}$ is positive
as well. Thus, $f$ is a positive definite function. Since $f$ is continuous, we have that $f\in B(G)$ (see \cite{folland}).

(iii)$\Rightarrow$(i) Since $G$ is amenable, $B(G)$ coincides with
the algebra of all completely bounded multipliers of $A(G)$. The
fact that $Nf$ is a Schur multiplier follows from \cite{bf} (see
also \cite{spronk}). The proof of Theorem \ref{c_pd} now shows that
$Nf$ is a positive Schur multiplier.
\end{proof}

\begin{corollary}
(i) \ The space of all local Schur multipliers coincides with the
linear span of the cone of all local positive Schur multipliers.

(ii) The space of all local Schur multiplier of Toeplitz type is
strictly larger than the linear span of the cone of all local
positive Schur multipliers of Toeplitz type.

\end{corollary}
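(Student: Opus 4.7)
For part (i), my plan is to invoke the Grothendieck--Peller-type characterisation of local Schur multipliers recalled in the introduction (from \cite{stt2011}): any local Schur multiplier $\varphi\in\mathfrak{B}(X\times X)$ agrees almost everywhere with $\sum_{k=1}^\infty a_k(x)b_k(y)$ for measurable families with $\sum_k |a_k(x)|^2 < \infty$ a.e.\ and $\sum_k |b_k(y)|^2 < \infty$ a.e. Setting $c_k := \overline{b_k}$ and letting $a,c : X \to \ell^2$ be the measurable functions $a(x) = (a_k(x))_k$, $c(x) = (c_k(x))_k$ (defined almost everywhere), this rewrites as $\varphi(x,y) = (a(x),c(y))_{\ell^2}$ a.e.

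The key step is then the bilinear polarisation identity
\[
4\,(a(x),c(y))_{\ell^2} \;=\; \sum_{j=0}^{3} i^{\,j}\bigl(a(x)+i^{\,j}c(x),\, a(y)+i^{\,j}c(y)\bigr)_{\ell^2},
\]
which, upon setting $d_j := a + i^{\,j} c$, expresses $\varphi$ as $\tfrac{1}{4} \sum_{j=0}^{3} i^{\,j} \varphi_j$ with $\varphi_j(x,y) := (d_j(x),d_j(y))_{\ell^2}$. Each $d_j : X \to \ell^2$ is measurable and $\ell^2$-valued almost everywhere, so by Theorem~\ref{th_lps} each $\varphi_j$ is a positive local Schur multiplier. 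Hence $\varphi$ belongs to the complex linear span of the positive cone, which gives (i) (the reverse inclusion being trivial, since every positive local Schur multiplier is a local Schur multiplier).

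For part (ii), I would fix an amenable locally compact group $G$ (say $G=\mathbb{Z}$ or $G=\mathbb{R}$) and work inside the ambient space of functions of Toeplitz type on $G$. By Theorem~\ref{th_toe}, every positive local Schur multiplier of Toeplitz type on $G$ has the form $Nf$ for some positive definite $f\in B(G)$. Since $B(G)$ is the complex linear span of its continuous positive definite elements, the linear span of the cone of positive local Schur multipliers of Toeplitz type on $G$ coincides with $\{Nf : f\in B(G)\}$; by amenability of $G$ one has $B(G) = M^{\cb}A(G)$, so by the Bo\.zejko--Fendler theorem \cite{bf} recalled at the beginning of the Toeplitz subsection this is precisely the set of all Schur multipliers of Toeplitz type on $G$. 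On the other hand, the paragraph preceding Theorem~\ref{th_toe} records that \cite{stt2011} provides functions $f$ (for instance on an abelian $G$) which belong locally to $A(G)$ at every point but do not lie in $B(G)$; for such an $f$, $Nf$ is a local Schur multiplier of Toeplitz type that is not a Schur multiplier, and therefore lies strictly outside the linear span above, yielding the desired strict inclusion.

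The polarisation calculation in (i) and the appeal to the characterisation of local Schur multipliers are routine; in that part all of the substantive content is absorbed into Theorem~\ref{th_lps}. In (ii) the main work is done by Theorem~\ref{th_toe} (reducing positive local multipliers of Toeplitz type to $B(G)$), together with the existence, quoted from \cite{stt2011}, of a local Schur multiplier of Toeplitz type that is not a Schur multiplier; the remainder is bookkeeping about linear spans and the amenability assumption $B(G) = M^{\cb}A(G)$.
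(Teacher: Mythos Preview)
Your proposal is correct and follows essentially the same approach as the paper. For (i) the paper likewise invokes the characterisation of local Schur multipliers from \cite{stt2011} together with Theorem~\ref{th_lps} and ``a standard polarisation argument''; for (ii) the paper likewise reduces via Theorem~\ref{th_toe} to the fact (from \cite{stt2011}) that there are local Schur multipliers of Toeplitz type which are not Schur multipliers. Your write-up simply spells out the polarisation identity and, in (ii), the extra observation that the linear span in question actually \emph{equals} $\{Nf:f\in B(G)\}$ (using $B(G)=M^{\cb}A(G)$ for amenable $G$), whereas the paper only needs the inclusion into the class of Schur multipliers of Toeplitz type.
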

\begin{proof}
(i) follows from \cite[Theorem 3.6]{stt2011}, Theorem \ref{th_lps}
and a standard polarisation argument.

(ii) follows from Theorem \ref{th_toe} and the fact that the space
of local Schur multipliers of Toeplitz type is strictly larger than
that of Schur multipliers of Toeplitz type (see \cite[Remark
7.11]{stt2011}).
\end{proof}

It was shown in \cite{j1992} that if a continuous function $\nph$ of Toeplitz type defined on the direct product $G\times G$,
where $G$ is a locally compact group, is a Schur multiplier, then the functions $a,b : G\rightarrow \ell^2$ in the representation
$\nph(x,y) = (a(x),b(y))_{\ell^2}$, can be chosen to be continuous.
It is thus natural to ask the following questions:

\begin{question}
Let $X$ be a locally compact topological space equipped with a regular Borel measure.
Suppose that $\nph : X\times X\rightarrow \bb{C}$ is a continuous Schur multiplier.

(i) \ \ Do there exist continuous bounded functions $a,b : X\rightarrow \ell^2$ such that $\nph(x,y) = (a(x),b(y))_{\ell^2}$ for all $x,y\in X$?

(ii) \ If $\nph$ is moreover positive, can one choose a continuous bounded function
$a : X\rightarrow \ell^2$ such that $\nph(x,y) = (a(x),a(y))_{\ell^2}$ for all $x,y\in X$?

(iii) Assuming that $\nph$ is a local (resp. positive local) Schur
multiplier, can a similar choice be made with $a$ and $b$ (resp.
$a$) not necessarily bounded?
\end{question}

\section{Local operator multipliers}\label{s_lom}

In this section we introduce local operator multipliers, a
non-commutative version of local Schur multipliers, and characterise
them, generalising the characterisation of local Schur multipliers
given in \cite{stt2011}. The suitable setting for local operator
multipliers is that of von Neumann algebras, as opposed to the
setting of C*-algebras, which was used to define and study universal
multipliers in \cite{ks2006} and \cite{jtt2009}. We therefore start
with collecting some notions and results from \cite{jtt2009} in a
form convenient for our purposes.

Let $H$ and $K$ be Hilbert spaces and let $H^{\dd}$ be the dual
Banach space of $H$; note that $H^{\dd}$ is conjugate linear
isometric to $H$ via the map $\partial : H\rightarrow H^{\dd}$
sending $x\in H$ to the element $x^{\dd}\in H^{\dd}$ given by
$x^{\dd}(y) = (y,x)$, $y\in H$. If $T\in \cl B(H,K)$, we let
$T^{\dd} \in \cl B(K^{\dd},H^{\dd})$ be the dual operator of $T$. If
$\cl M\subseteq \cl B(H)$ is a von Neumann algebra, we denote by
$\cl M^o$ the opposite von Neumann algebra of $\cl M$; we have that
$\cl M^o\subseteq \cl B(H^{\dd})$ consists of the elements of the
form $a^{\dd}$, where $a\in \cl M$. In particular, $\cl B(H)^o = \cl
B(H^{\dd})$. By $H\otimes K$ we denote the Hilbert space tensor
product of $H$ and $K$. If $\cl M$ and $\cl N$ are von Neumann
algebras, we denote by $\cl M\bar{\otimes} \cl N$ the (spatial weak*
closed) von Neumann algebra tensor product. Thus, $\cl
B(H^{\dd}\otimes K) = \cl B(H)^o\bar{\otimes}\cl B(K)$.

We let $\theta : H^{\dd}\otimes K \rightarrow \cl C_2(H,K)$ be the
canonical isomorphism sending an elementary tensor $x^{\dd}\otimes
y$ to the rank one operator given by $\theta(x^{\dd}\otimes y)(z) =
(z,x)y$, $z\in H$. This allows us to equip $H^{\dd}\otimes K$ with
an \lq\lq operator'' norm:
$$\|\xi\|_{\op}\overset{def}=\|\theta(\xi)\|_{\op},\ \ \xi\in H^{\dd}\otimes K.$$

For $\nph \in \cl B(H^{\dd}\otimes K)$, we define $S_{\nph} : \cl
C_2(H,K)\rightarrow \cl C_2(H,K)$ to be the mapping given by
$S_{\nph}(\theta(\xi)) = \theta(\nph \xi)$, $\xi \in H^{\dd}\otimes
K$. We call $\nph$ an \emph{operator multiplier} if there exists $C
> 0$ such that $\|S_{\nph}(\theta(\xi))\|_{\op}\leq C
\|\theta(\xi)\|_{\op}$, for every $\xi \in H^{\dd}\otimes K$. If
$\nph$ is an operator multiplier, then the mapping $S_{\nph}$
extends by continuity to a mapping (denoted in the same way)
$S_{\nph} : \cl K(H,K)\rightarrow \cl K(H,K)$ and, by taking the
second dual, to a mapping $S_{\nph}^{**} : \cl B(H,K)\rightarrow \cl
B(H,K)$. An element $\nph \in \cl B(H^{\dd}\otimes K)$ will be
called a \emph{completely bounded operator multiplier}, or a
\emph{c.b. operator multiplier}, if $S_{\nph}$ is completely bounded
with respect to the operator space structure arising from the
inclusion $\cl C_2(H,K)\subseteq \cl K(H,K)$. If $\cl M\subseteq \cl
B(H)$ and $\cl N\subseteq \cl B(K)$ are von Neumann algebras, we
will denote by $\mcb(\cl M,\cl N)$ the collection of all c.b. operator multipliers in $\cl M^o\bar\otimes\cl N$ and call
its elements \emph{completely bounded $\cl M,\cl N$-multipliers}, or
\emph{c.b. $\cl M,\cl N$-multipliers}.
We note that $\mcb(\cl M,\cl N)$ is
a subalgebra of $\cl M^o\bar\otimes\cl N$.

We next recall \cite{bs1992} that the \emph{extended Haagerup tensor
product} $\cl B(K)\otimes_{\eh} \cl B(H)$ consists of the sums of
the form $\sum_{i=1}^{\infty}b_i\otimes a_i$, where $(b_i)_{i\in
\bb{N}}$ (resp. $(a_i)_{i\in \bb{N}}$) is a bounded row (resp.
column) operator. There exists a one-to-one correspondence between
the elements of $\cl B(K)\otimes_{\eh} \cl B(H)$ and the normal
completely bounded maps on $\cl B(H,K)$: to the element $u =
\sum_{i=1}^{\infty}b_i\otimes a_i \in \cl B(K)\otimes_{\eh} \cl
B(H)$, there corresponds the map $\Phi_u$ given by $\Phi_u(x) =
\sum_{i=1}^{\infty} b_i x a_i$, $x\in \cl B(H,K)$.

Let $\nph \in \mcb(\cl M,\cl N)$. The mapping $S_{\nph}^{**}$ is
normal and completely bounded; by the previous paragraph, there
exists a (unique) element $u_{\nph} \in \cl B(K)\otimes_{\eh}\cl
B(H)$, called the \emph{symbol} of $\nph$ \cite{jltt2009}, such that $S_{\nph}^{**}
= \Phi_{u_{\nph}}$. Moreover, \cite[Proposition 5.5]{jltt2009} shows
that $u_{\nph}\in \cl N\otimes_{\eh}\cl M$.
In particular, the map $S_{\nph}^{**}$ is $\cl N',\cl M'$-modular.

In the next proposition, we describe the elements $u\in \cl
N\otimes_{\eh}\cl M$ that are symbols of c.b. operator multipliers.

\begin{proposition}\label{c2pres}
The mapping $\Lambda : \nph\rightarrow \Phi_{u_{\nph}}$ is a
bijective homomorphism from $\mcb(\cl B(H),\cl B(K))$ onto the space
of all normal completely bounded maps on $\cl B(H,K))$ which leave
$\cl C_2(H,K)$ invariant.
\end{proposition}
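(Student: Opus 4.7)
The plan is to verify the three required properties — that $\Lambda$ is a homomorphism, is injective, and is surjective — one at a time, with the main content in surjectivity.

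First, for the homomorphism property, I would compute directly from the defining relation $S_\varphi(\theta(\xi)) = \theta(\varphi\xi)$:
$$S_{\varphi\psi}(\theta(\xi)) = \theta(\varphi\psi\xi) = S_\varphi(\theta(\psi\xi)) = S_\varphi(S_\psi(\theta(\xi))), \qquad \xi\in H^{\dd}\otimes K.$$
Thus $S_{\varphi\psi} = S_\varphi \circ S_\psi$ on $\cl C_2(H,K)$; passing to second duals gives $\Lambda(\varphi\psi) = \Lambda(\varphi)\Lambda(\psi)$. For injectivity, if $\Lambda(\varphi) = S_\varphi^{**} = 0$ then $S_\varphi$ vanishes on $\cl C_2(H,K)$, so $\theta(\varphi\xi) = 0$ for all $\xi$ in the dense subspace $H^{\dd}\odot K$ of $H^{\dd}\otimes K$; since $\theta$ is a linear isomorphism, $\varphi = 0$.

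For surjectivity, let $\Psi$ be any normal completely bounded map on $\cl B(H,K)$ with $\Psi(\cl C_2(H,K))\subseteq\cl C_2(H,K)$. The key step is to invoke the closed graph theorem to show that $\Psi|_{\cl C_2(H,K)}$ is bounded in the Hilbert--Schmidt norm: if $T_n\to 0$ in $\|\cdot\|_2$ and $\Psi(T_n)\to S$ in $\|\cdot\|_2$, then both convergences also hold in $\|\cdot\|_{\op}$ (since $\|\cdot\|_{\op}\leq \|\cdot\|_2$), and operator-norm continuity of $\Psi$ forces $S = \Psi(0) = 0$. Consequently $\varphi := \theta^{-1}\circ\Psi|_{\cl C_2(H,K)}\circ\theta$ is a bounded operator on the Hilbert space $H^{\dd}\otimes K$, hence $\varphi\in\cl B(H^{\dd}\otimes K) = \cl B(H)^o\bar\otimes\cl B(K)$. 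By construction $S_\varphi = \Psi|_{\cl C_2(H,K)}$, and since $\Psi$ is completely bounded on $\cl B(H,K)$ with respect to the operator norm, so is its restriction; therefore $\varphi\in\mcb(\cl B(H),\cl B(K))$. Finally, $\Lambda(\varphi) = S_\varphi^{**}$ and $\Psi$ are two normal maps on $\cl B(H,K)$ whose restrictions to $\cl C_2(H,K)$ coincide; as $\cl C_2(H,K)$ is norm-dense in $\cl K(H,K)$ and $\cl K(H,K)$ is weak$^*$-dense in $\cl B(H,K)$, the two maps agree everywhere, so $\Lambda(\varphi) = \Psi$.

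I expect the main obstacle to be precisely this surjectivity step, and more specifically the verification that $\Psi|_{\cl C_2(H,K)}$ lifts to a bounded operator on the Hilbert space $H^{\dd}\otimes K$ (rather than merely being a completely bounded map in operator norm), so that $\varphi$ makes sense as an element of $\cl B(H)^o\bar\otimes\cl B(K)$; the closed graph argument above is what bridges the Hilbert-Schmidt and operator-norm structures. Everything else reduces to routine manipulations with $\theta$ and the uniqueness of normal extensions.
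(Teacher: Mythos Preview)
Your proposal is correct and follows essentially the same route as the paper. The key step in both is the closed graph argument for surjectivity: defining $\varphi = \theta^{-1}\circ\Psi|_{\cl C_2}\circ\theta$ and showing it is bounded on $H^{\dd}\otimes K$ by noting that Hilbert--Schmidt convergence implies operator-norm convergence, where $\Psi$ is continuous. The paper is terser on the homomorphism, injectivity, and the final identification $\Lambda(\varphi)=\Psi$ (it simply asserts these as immediate), while you spell them out; conversely, you do not explicitly state that $\Lambda(\varphi)=\Phi_{u_\varphi}$ always lands in the target space of normal completely bounded maps preserving $\cl C_2(H,K)$, but this is trivial from the construction and the paper notes it in one line.
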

\begin{proof}
Suppose that $\nph\in \mcb(\cl B(H),\cl B(K))$. The map
$\Phi_{u_{\nph}}$ is the unique normal extension of $S_{\nph} : \cl
C_2(H,K)\rightarrow \cl C_2(H,K)$ to $\cl B(H,K)$. It follows that
$\Phi_{u_{\nph}}$ preserves $\cl C_2(H,K)$.

Conversely, suppose that $\Phi$ is a normal completely bounded map
which leaves $\cl C_2(H,K)$ invariant. Let $\nph : H^{\dd}\otimes
K\rightarrow H^{\dd}\otimes K$ be the map given by $\nph\xi =
\theta^{-1}(\Phi(\theta(\xi)))$. Clearly, $\nph$ is a linear map. We
show that it has a closed graph. Suppose $\xi_k\rightarrow 0$ and
$\nph\xi_k\rightarrow \eta$ in the norm of $H^{\dd}\otimes K$. It follows
that $\|\theta(\xi_k)\|_{\op}\rightarrow 0$ and hence
$\|\Phi(\theta(\xi_k))\|_{\op}\rightarrow 0$. Thus,
$\|\theta(\nph\xi_k)\|_{\op}\rightarrow 0$ and hence $\eta = 0$.

It follows from the Closed Graph Theorem that $\nph\in \cl
B(H^{\dd}\otimes K)$. By its definition, $S_{\nph} = \Phi|_{\cl C_2(H,K)}$
and it follows that $\nph\in \mcb(\cl B(H),\cl B(K))$ and
$\Phi_{u_{\nph}} = \Phi$. The fact that $\Lambda$ is a homomorphism
is immediate from its definition.
\end{proof}

We recall that if $\cl A_1$ and $\cl A_2$ are C*-algebras, then the
Haagerup norm of an element $\omega$ of $\mathcal{A}_{1}\odot
\mathcal{A}_2$ is defined by
\begin{eqnarray*}
\|\omega\|_{\hh} = \inf\left\{\left\|\sum
a_ia_i^*\right\|^{\frac{1}{2}}\left\|\sum
b_i^*b_i\right\|^{\frac{1}{2}}:\omega=\sum a_i\otimes b_i\right\}.
\end{eqnarray*}
We also let \cite{ks2006}
\[\|\omega\|_{\ph}=\inf\left\{\left\|\sum a_ia_i^*\right\|^{\frac{1}{2}}\left\|\sum
b_ib_i^*\right\|^{\frac{1}{2}}:\omega=\sum a_i\otimes b_i\right\}.\]

Let $(\nph_{\nu})_{\nu}\subseteq \cl B(H^{\dd})\odot \cl B(K)$ be a
net and $\nph\in \cl B(H^{\dd}\otimes K)$. We write $\nph={\rm
m-}\lim_{\nu} \nph_{\nu}$ if the net $(\nph_{\nu})_{\nu}$ converges
semi-weakly to $\nph$ (that is, $\langle\nph_{\nu}(h_1\otimes
k_1),h_2\otimes k_2\rangle\rightarrow \langle\nph(h_1\otimes
k_1),h_2\otimes k_2\rangle$ for every $h_1,h_2\in H^{\dd}$ and $k_1,
k_2\in K$), and there exists $C
> 0$ such that $\|\nph_{\nu}\|_{\ph} \leq C$ for all $\nu$.

The following characterisation of c.b. $\cl M,\cl N$-multipliers
follows from \cite{jtt2009} and \cite{jltt2009}:

\begin{theorem}\label{th_prch}
An element $\nph\in \cl M^o\bar\otimes\cl N$ is a c.b. $\cl M,\cl
N$-multiplier if and only if there exists a net
$(\nph_{\nu})\subseteq \cl M^o\odot\cl N$ such that $\nph={\rm
m-}\lim_{\nu}\nph_{\nu}$.
\end{theorem}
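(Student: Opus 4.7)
The plan is to deduce the statement from the two cited works: the characterisation of universal c.b. operator multipliers in \cite{jtt2009}, together with the refinement in \cite{jltt2009} locating the symbol of a c.b. $\cl M,\cl N$-multiplier inside the extended Haagerup tensor product $\cl N\otimes_{\eh}\cl M$. The argument splits into the two implications in the natural way, with the symbol $u_\nph$ acting as the bridge between multipliers and their approximating nets.

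For the sufficiency direction, I would start from a net $(\nph_\nu)\subseteq \cl M^o\odot\cl N$ with $\nph = {\rm m-}\lim_\nu \nph_\nu$. Writing $\nph_\nu = \sum_{i=1}^{n_\nu} a_i^{(\nu),\dd}\otimes b_i^{(\nu)}$ with $a_i^{(\nu)}\in\cl M$ and $b_i^{(\nu)}\in\cl N$, the direct computation $S_{a^\dd\otimes b}(T) = bTa$ yields $S_{\nph_\nu}(T) = \sum_i b_i^{(\nu)} T a_i^{(\nu)}$, whose c.b.\ norm on $\cl C_2(H,K)$ is controlled by a constant multiple of $\|\nph_\nu\|_{\ph}$. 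The uniform $\ph$-bound then gives a uniform c.b.\ bound on $(S_{\nph_\nu})$; extracting a point-weak$^*$ cluster point $\Phi$, the semi-weak convergence $\nph_\nu\to\nph$ forces $\Phi(\theta(\xi)) = \theta(\nph\xi)$ on rank-one $\xi$, and hence by continuity on all of $\cl C_2(H,K)$. Proposition~\ref{c2pres} (applied to $\cl M^o\bar\otimes\cl N\subseteq \cl B(H)^o\bar\otimes\cl B(K)$) then identifies $\nph$ as a c.b.\ $\cl M,\cl N$-multiplier.

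For the necessity direction, given $\nph\in \mcb(\cl M,\cl N)$, the discussion preceding Proposition~\ref{c2pres} provides a symbol $u_\nph\in \cl N\otimes_{\eh}\cl M$ with $S_{\nph}^{**} = \Phi_{u_\nph}$. Writing $u_\nph = \sum_{i=1}^{\infty} b_i\otimes a_i$ with $(b_i)$ a bounded row in $\cl N$ and $(a_i)$ a bounded column in $\cl M$, I would form the finite truncations $u_\nu = \sum_{i=1}^{N_\nu} b_i\otimes a_i$ and set $\nph_\nu = \sum_{i=1}^{N_\nu} a_i^\dd \otimes b_i \in \cl M^o\odot\cl N$. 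Uniform boundedness of the row and column factors controls $\|\nph_\nu\|_{\ph}$ uniformly, and the weak$^*$ convergence of $u_\nu$ to $u_\nph$ inside $\cl N\otimes_{\eh}\cl M$ translates, via $\Phi_u$ and Proposition~\ref{c2pres}, into semi-weak convergence $\nph_\nu \to \nph$ in $\cl B(H^\dd\otimes K)$. Thus $\nph = {\rm m-}\lim_\nu \nph_\nu$.

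The main obstacle is the dictionary between the two topologies: one must verify that m-convergence on $\cl M^o\bar\otimes\cl N$ (semi-weak plus bounded $\ph$-norm) corresponds exactly to the natural weak$^*$-with-bounded-factors convergence on the extended Haagerup tensor product of the symbols, and that the truncation procedure for $u_\nph$ produces elements lying in $\cl M^o\odot\cl N$ rather than merely in $\cl B(H)^o\odot\cl B(K)$. Both points are the technical core of \cite{jtt2009, jltt2009}; the second uses crucially that $u_\nph \in \cl N\otimes_{\eh}\cl M$ (rather than just in $\cl B(K)\otimes_{\eh}\cl B(H)$), which is the modular refinement supplied by \cite[Proposition~5.5]{jltt2009}. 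Once both points are in hand the two implications close, completing the characterisation.
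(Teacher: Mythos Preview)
The paper does not supply its own proof of this theorem: it is stated as a consequence of \cite{jtt2009} and \cite{jltt2009} and left at that. Your proposal is therefore not competing with an argument in the paper but rather filling in the deduction that the paper omits, and the outline you give is essentially the right one.

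Two small points. For the sufficiency direction, extracting a point-weak$^*$ cluster point $\Phi$ is unnecessary: the semi-weak convergence $\nph_\nu\to\nph$ together with Lemma~\ref{ip} already pins down the limit, since for any elementary tensor $\xi$ and any $h,k$ one has $(S_{\nph_\nu}(\theta(\xi))h,k)\to (S_\nph(\theta(\xi))h,k)$, and the uniform $\ph$-bound then gives $\|S_\nph\|_{\cb}\leq \sup_\nu\|S_{\nph_\nu}\|_{\cb}$ by lower semi-continuity. This is exactly how the paper handles the analogous step in the proof of (iii)$\Rightarrow$(ii) of Theorem~\ref{th_chavnmu} and in Theorem~\ref{th_monoc}. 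Second, the appeal to Proposition~\ref{c2pres} in your sufficiency argument is not really needed: by definition, $\mcb(\cl M,\cl N)$ consists of those $\nph\in\cl M^o\bar\otimes\cl N$ for which $S_\nph$ is completely bounded, so once $\|S_\nph\|_{\cb}<\infty$ is established you are done.

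For the necessity direction your plan is sound: the crucial input is indeed \cite[Proposition~5.5]{jltt2009}, which places $u_\nph$ in $\cl N\otimes_{\eh}\cl M$ and thereby guarantees that the truncations $\nph_N=\sum_{i=1}^N a_i^{\dd}\otimes b_i$ lie in $\cl M^o\odot\cl N$ rather than merely in $\cl B(H)^o\odot\cl B(K)$. The translation between convergence of the truncated symbols and semi-weak convergence of the $\nph_N$ is again Lemma~\ref{ip}, and the uniform $\ph$-bound comes directly from the boundedness of the row $(b_i)$ and the column $(a_i)$.
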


We now introduce local operator multipliers as a non-commutative
version of local Schur multipliers. To motivate our definition,
recall that, in the commutative case, local Schur multipliers are
defined within the class of all measurable, in general unbounded,
functions on the direct product of two measure spaces. The natural
non-commutative analogue of this algebra is the set of all
operators, affiliated with the tensor product of two von Neumann
algebras. On the other hand, the non-commutative analogue of
measurable subsets are projections. We are thus naturally led to
define an \emph{$\cl M',\cl N'$-covering family} (where $\cl
M\subseteq \cl B(H)$ and $\cl N\subseteq \cl B(K)$ are von Neumann
algebras) as a family $\{p_n\otimes q_n\}_{n\in \bb{N}}$, where
$\{p_n\}_{n\in \bb{N}}\subseteq \cl M'$ and $\{q_n\}_{n\in
\bb{N}}\subseteq \cl N'$ are families of pairwise commuting
projections, such that $\vee_{n\in \bb{N}} p_n\otimes q_n = I$.

\begin{definition}
Given a von Neumann algebra $\mathcal{M}\subseteq\mathcal{B}(H)$ and
projections $\{p_i\}_{i\in I}\subseteq\mathcal{M}'$, we say that a
densely defined operator $A : H\rightarrow H$ is \emph{associated
with $\mathcal{M}$ with respect to $\{p_i\}$} if $p_iH\subseteq
\dom(A)$ and $p_iA, Ap_i\in \mathcal{M}p_i$ for all
$i\in\mathbb{N}$.
\end{definition}

The set of all such operators will be denoted by $\Assoc\mathcal{M}_{\{p_i\}}$.

\begin{definition}\label{d_lop}
Let $\cl M\subseteq \cl B(H)$ and $\cl N\subseteq \cl B(K)$ be von
Neumann algebras. An element $\nph \in \Aff(\cl M^o\bar\otimes\cl
N)$ will be called a \emph{local $\cl M, \cl N$-multiplier} if there
exists an $\cl M', \cl N'$-covering family $\{p_n\otimes q_n\}_{n\in
\bb{N}}$ such that $\nph(p_n^{\dd}\otimes q_n) \in \mcb(\cl Mp_n,
\cl Nq_n)$ for every $n\in \bb{N}$.
\end{definition}

\begin{lemma}\label{l_lm}
Let $\cl M\subseteq \cl B(H)$ and $\cl N\subseteq \cl B(K)$ be von
Neumann algebras and $\nph \in \cl M^o\bar\otimes\cl N$.

(i) If $p_1,p_2\in \cl M'$ and $q_1,q_2\in \cl N'$ are projections,
$p_1\leq p_2$, $q_1\leq q_2$, and $\nph(p_2^{\dd}\otimes q_2) \in
\mcb(\cl Mp_2,\cl Nq_2)$ then $\nph(p_1^{\dd}\otimes q_1) \in
\mcb(\cl Mp_1,\cl Nq_1)$.

(ii) If $(e_i)_{i=1}^n \subseteq \cl M'$, $(f_j)_{i=1}^m \subseteq
\cl N'$ are sequences of pairwise orthogonal projections such that
$\vee_{i=1}^n e_i = I$, $\vee_{j=1}^m f_j = I$ and
$\nph(e_i^{\dd}\otimes f_j)\in \mcb(\cl Me_i,\cl Nf_j)$ for each
$i,j$, then $\nph\in \mcb(\cl M,\cl N)$.
\end{lemma}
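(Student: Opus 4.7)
For (i), the plan is to identify $\nph(p_1^{\dd} \otimes q_1)$ as a compression of $\psi := \nph(p_2^{\dd} \otimes q_2)$. Because $p_1 \in \cl M'$ and $q_1 \in \cl N'$, the projection $p_1^{\dd} \otimes q_1$ is central in $\cl M^o \bar{\otimes} \cl N$, so
\[
\nph(p_1^{\dd} \otimes q_1) \;=\; (p_1^{\dd} \otimes q_1)\,\psi\,(p_1^{\dd} \otimes q_1)
\]
lies in $(p_1^{\dd}\cl M^o p_1^{\dd}) \bar{\otimes} (q_1 \cl N q_1) = (\cl M p_1)^o \bar{\otimes} (\cl N q_1)$, which is the correct ambient algebra. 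Using the identity $\theta((p^{\dd} \otimes q)\xi) = q\theta(\xi)p$, a direct unpacking of definitions will show that, for $T \in \cl C_2(p_1 H, q_1 K)$ regarded inside $\cl C_2(p_2 H, q_2 K)$,
\[
S_{\nph(p_1^{\dd} \otimes q_1)}(T) \;=\; q_1\, S_{\psi}(T)\, p_1.
\]
Since the inclusion $\cl C_2(p_1 H, q_1 K) \hookrightarrow \cl C_2(p_2 H, q_2 K)$ is a complete isometry in operator norm, and the compression $S \mapsto q_1 S p_1$ is a complete contraction, $S_{\nph(p_1^{\dd} \otimes q_1)}$ inherits complete boundedness from $S_{\psi}$, giving $\nph(p_1^{\dd} \otimes q_1) \in \mcb(\cl M p_1, \cl N q_1)$.

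For (ii), set $\psi_{ij} := \nph(e_i^{\dd} \otimes f_j)$. Since the $e_i$ (resp.\ $f_j$) are finitely many pairwise orthogonal projections whose join is $I$, they sum to $I$, so $\sum_{i,j} e_i^{\dd} \otimes f_j = I$ and $\nph = \sum_{i,j} \psi_{ij}$ as a finite sum in $\cl M^o \bar{\otimes} \cl N$. The main step will be to show that each $\psi_{ij}$, now viewed inside $\cl M^o \bar{\otimes} \cl N$, lies in $\mcb(\cl M, \cl N)$. The centrality argument from (i) gives $\psi_{ij} = (e_i^{\dd} \otimes f_j)\nph(e_i^{\dd} \otimes f_j)$, and the analogous computation will yield, for $T \in \cl C_2(H, K)$,
\[
S_{\psi_{ij}}(T) \;=\; \widetilde{S}_{ij}(f_j T e_i)
\]
(regarded back inside $\cl C_2(H,K)$), where $\widetilde{S}_{ij} : \cl C_2(e_i H, f_j K) \to \cl C_2(e_i H, f_j K)$ is the c.b.\ Schur multiplier furnished by the hypothesis $\psi_{ij} \in \mcb(\cl M e_i, \cl N f_j)$. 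Because $T \mapsto f_j T e_i$ is a complete contraction onto the corner and the inclusion back into $\cl C_2(H,K)$ is a complete isometry, $S_{\psi_{ij}}$ is c.b., i.e.\ $\psi_{ij} \in \mcb(\cl M, \cl N)$. As $\mcb(\cl M, \cl N)$ is a linear space (being an algebra), the finite sum $\nph = \sum_{i,j} \psi_{ij}$ belongs to $\mcb(\cl M, \cl N)$.

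The only real technical care lies in the bookkeeping of the various identifications — the duality $x \mapsto x^{\dd}$, the realisation $\theta : H^{\dd} \otimes K \to \cl C_2(H,K)$, and the equalities $(\cl M p)^o = p^{\dd} \cl M^o p^{\dd}$ and $(e_i^{\dd} \otimes f_j)(\cl M^o \bar{\otimes} \cl N)(e_i^{\dd} \otimes f_j) = (\cl M e_i)^o \bar{\otimes} (\cl N f_j)$ — but once these are pinned down, both parts reduce cleanly to the principle that compression by (and inclusion from) a central projection preserves complete boundedness of the associated Schur multiplier map.
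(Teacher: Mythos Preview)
Your proposal is correct and follows essentially the same route as the paper: for (i) the paper records the one-line identity $S_{\nph(p_1^{\dd}\otimes q_1)} = S_{\nph(p_2^{\dd}\otimes q_2)}|_{\cl C_2(p_1 H, q_1 K)}$, and for (ii) it writes $S_{\nph} = \sum_{i,j} S_{\nph(e_i^{\dd}\otimes f_j)}$ and bounds the cb-norm by the finite sum --- exactly what your compression/inclusion argument unpacks in more detail. One small terminological slip: $p_1^{\dd}\otimes q_1$ need not be \emph{central} in $\cl M^o\bar\otimes\cl N$ (that would require $p_1\in\cl M\cap\cl M'$); it lies in the commutant $(\cl M^o\bar\otimes\cl N)' = (\cl M')^o\bar\otimes\cl N'$, which is all you actually use to get commutation with $\nph$.
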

\begin{proof}
(i) follows from the fact that $S_{\nph(p_1^{\dd}\otimes q_1)} = S_{\nph(p_2^{\dd}\otimes q_2)}|_{\cl C_2(p_1 H, q_1 K)}$.

(ii) Since $S_{\nph} = \sum_{i,j} S_{\nph(p_i^{\dd}\otimes q_j)}$, we have that
$\|S_{\nph}\|_{\cb} \leq \sum_{i,j}\|S_{\nph(p_i^{\dd}\otimes q_j)}\|_{\cb}$.
\end{proof}

\begin{proposition}\label{l_orthog}
Let $\cl M\subseteq \cl B(H)$ and $\cl N\subseteq \cl B(K)$ be von
Neumann algebras and suppose that $\{e_i\}\subseteq \cl M'$ and
$\{f_j\}\subseteq \cl N'$ are at most countable families of pairwise
orthogonal projections. Let $\cl E$ be the linear span of
$\cup_{i,j} f_j\cl B(H,K)e_i$, and $\Phi : \cl E\rightarrow \cl
B(H,K)$ be a linear map. The following are equivalent:

(i) $\Phi$ leaves $f_j\cl B(H,K)e_i$ invariant, and the restriction
$\Phi_{i,j} \stackrel{def}{=} \Phi|_{f_j\cl B(H,K)e_i} : f_j\cl
B(H,K)e_i\rightarrow f_j\cl B(H,K)e_i$ is $f_j\cl N'f_j, e_i\cl M'
e_i$-modular, normal and completely bounded;

(ii) there exist families $\{a_k\}_{k\in
\bb{N}}\subseteq\Assoc\mathcal {M}_{\{e_i\}}$ and
$\{b_k\}_{k\in\bb{N}}\subseteq\Assoc\mathcal {N}_{\{f_j\}}$ such
that $(e_ia_k)_{k\in \bb{N}}$ defines a bounded column operator for
each $i$, $(b_kf_j)_{k\in \bb{N}}$ defines a bounded row operator
for each $j$, and $\Phi(x) = \sum_{k=1}^{\infty} b_k x a_k$, for all
$x\in \cl E$.
\end{proposition}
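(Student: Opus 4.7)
The easy direction (ii)$\Rightarrow$(i) follows directly from the characterisation of normal cb maps via the extended Haagerup tensor product recalled before the statement. For $x\in f_j\cl B(H,K)e_i$ I would write $\Phi(x)=\sum_k b_kxa_k = \sum_k (b_kf_j)\,x\,(e_ia_k)$; each summand lies in $f_j\cl B(H,K)e_i$, the families $(b_kf_j)_k\subseteq f_j\cl Nf_j$ and $(e_ia_k)_k\subseteq e_i\cl Me_i$ form a bounded row and column by hypothesis, so the series converges weak$^*$ to an element of $f_j\cl B(H,K)e_i$ and defines a normal cb $f_j\cl N'f_j,\,e_i\cl M'e_i$-bimodular map $\Phi_{i,j}$.

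For (i)$\Rightarrow$(ii), the plan has three steps. First, since $\Phi_{i,j}$ is a normal completely bounded $f_j\cl N'f_j,\,e_i\cl M'e_i$-bimodular map on $\cl B(e_iH,f_jK)$, the extended Haagerup realisation (Blecher--Smith \cite{bs1992}) yields $\Phi_{i,j}(x)=\sum_k b_k^{(i,j)}xa_k^{(i,j)}$ with $(a_k^{(i,j)})_k\subseteq e_i\cl Me_i$ a bounded column and $(b_k^{(i,j)})_k\subseteq f_j\cl Nf_j$ a bounded row, the row and column norms splitting $\|\Phi_{i,j}\|_{\cb}$ in any prescribed way. Second, I rescale $a_k^{(i,j)}\mapsto\lambda_{i,j}a_k^{(i,j)}$, $b_k^{(i,j)}\mapsto\lambda_{i,j}^{-1}b_k^{(i,j)}$ by positive weights $\lambda_{i,j}$ chosen so that
\[
\sum_j\lambda_{i,j}^2\|(a_k^{(i,j)})_k\|_{\mathrm{col}}^2<\infty\ \text{for each } i,\quad \sum_i\lambda_{i,j}^{-2}\|(b_k^{(i,j)})_k\|_{\mathrm{row}}^2<\infty\ \text{for each } j;
\]
an explicit choice is $\delta_j:=2^{-j}/(1+\max_{i\le j}\|\Phi_{i,j}\|_{\cb}^2)$ and $\lambda_{i,j}^{-2}:=(i+1)^{-2}\delta_j^{-1}\|(b_k^{(i,j)})_k\|_{\mathrm{row}}^{-2}$ (with the trivial convention when the row is zero). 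Third, fix a bijection $\sigma\colon\bb{N}\to I\times J$, $\sigma(n)=(i_n,j_n)$, reindex pairs $(k,n)$ as a single $m\in\bb{N}$, and define $a_m|_{e_lH}:=\delta_{l,i_n}\lambda_{l,j_n}a_k^{(l,j_n)}$ and $b_m|_{f_lK}:=\delta_{l,j_n}\lambda_{i_n,l}^{-1}b_k^{(i_n,l)}$. A direct check gives $e_la_m, a_me_l\in\cl Me_l$ and $f_lb_m, b_mf_l\in\cl Nf_l$, so $a_m\in\Assoc\cl M_{\{e_i\}}$ and $b_m\in\Assoc\cl N_{\{f_j\}}$; for $x\in f_j\cl B(H,K)e_i$ only the unique $n$ with $\sigma(n)=(i,j)$ contributes to $\sum_m b_mxa_m$ and the inner $k$-sum reproduces $\Phi_{i,j}(x)=\Phi(x)$, with the column/row conditions across all $m$ guaranteed by the estimate $\sum_m(e_ia_m)^*(e_ia_m)\le\sum_j\lambda_{i,j}^2\|(a_k^{(i,j)})_k\|_{\mathrm{col}}^2<\infty$ and its row analogue.

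The only real obstacle is the joint rescaling in Step 2: the hypotheses provide no uniform control on the norms $\|\Phi_{i,j}\|_{\cb}$, so a naive concatenation of the pointwise Haagerup representations from Step 1 will in general fail either the column summability across $j$ (for some fixed $i$) or the row summability across $i$ (for some fixed $j$) required in (ii). Securing compatible weights $\lambda_{i,j}$ therefore forces a weighted diagonal construction, and this is the single point in the argument where countability of $I\times J$ is used essentially.
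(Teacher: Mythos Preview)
Your argument is correct. The easy direction matches the paper's, and your hard direction (i)$\Rightarrow$(ii) reaches the same conclusion by a genuinely different rescaling device, so a short comparison is in order.

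The paper, like you, first invokes the Haagerup--Smith representation on each block to obtain bounded columns $A_{i,j}=(a_{i,j}^k)_k$ in $\cl Me_i$ and rows $B_{i,j}=(b_{i,j}^k)_k$ in $\cl Nf_j$ with $\|A_{i,j}\|=\|B_{i,j}\|$ and $\alpha_{i,j}:=\|A_{i,j}\|^2$. Where you manufacture scalar weights $\lambda_{i,j}$ directly, the paper instead appeals to \cite[Lemma~3.5]{stt2011} to produce vectors $r_i,s_j\in\ell^2$ with $(r_i,\overline{s_j})_{\ell^2}=\alpha_{i,j}$, and then builds a single column and row over the enlarged index set $\Lambda=\bb N^4$ via
\[
A=\left(\frac{i\,r_i(l)}{j\sqrt{\alpha_{i,j}}}\,a_{i,j}^k\right)_{(i,j,k,l)},\qquad
B=\left(\frac{j\,s_j(l)}{i\sqrt{\alpha_{i,j}}}\,b_{i,j}^k\right)_{(i,j,k,l)}.
\]
The extra index $l$ and the identity $\sum_l r_i(l)s_j(l)=\alpha_{i,j}$ are exactly what make the product $B(x\otimes 1)A$ collapse back to $\Phi_{i,j}(x)$, while the factors $i/j$ and $j/i$ provide the $\sum 1/j^2$ convergence that controls $\|e_nA\|$ and $\|Bf_n\|$. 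In effect, the cited lemma packages your ``joint rescaling obstacle'' into an $\ell^2$-factorisation of the norm matrix $(\alpha_{i,j})$.

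What each approach buys: your construction is self-contained and avoids the external lemma, at the price of an asymmetric choice of weights (your $\delta_j$ depends on a running maximum $\max_{i\le j}\|\Phi_{i,j}\|_{\cb}$). The paper's construction is symmetric in $i$ and $j$ and yields explicit bounds $\|e_nA\|\le n\|r_n\|_2(\sum_j j^{-2})^{1/2}$, which are reused verbatim in the ``central'' variant (Proposition~\ref{p_clm}) where the same column/row must be shown bounded after summing finitely many blocks. Your weights would also work there, but the paper's formulation transfers without modification.
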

\begin{proof}
(ii)$\Rightarrow$(i) Suppose that $x = f_j x e_i$ for some $i,j\in
\bb{N}$. We have that $a_k = \sum_{i=1}^{\infty} a_k e_i$, where the
sum converges pointwise on $\cup_{i=1}^{\infty} e_i H$. A similar
formula holds for $b_k$. By assumption, for every $i$ (resp. $j$)
and and for every $k$, there exists $\tilde{a}_{k,i}\in \cl M$
(resp. $\tilde{b}_{k,j}\in \cl N$) such that $e_ia_k =
e_i\tilde{a}_{k,i} $ (resp. $b_kf_j = \tilde{b}_{k,j}f_j $). Let
$c\in e_i\cl M'e_i$ and $d\in f_j\cl N'f_j$. We have that
\begin{eqnarray*}
\Phi(dxc) & = & \sum_{k=1}^{\infty} b_k (f_jdf_jxe_ice_i)a_k =
\sum_{k=1}^{\infty}\tilde{b}_{k,j} (f_jdf_j x e_ice_i) \tilde{a}_{k,i}\\
& = & \sum_{k=1}^{\infty}f_jdf_j \tilde{b}_{k,j} f_jxe_i
\tilde{a}_{k,i}(e_ice_i) = d\left(\sum_{k=1}^{\infty}
b_{k}xa_k\right)c.
\end{eqnarray*}

These identities show that $\Phi$ leaves $f_j \cl B(H,K)e_i$
invariant, and that its restriction to $f_j \cl B(H,K)e_i$ is a
normal completely bounded $f_j\cl N'f_j,e_i\cl M'e_i$-modular map.

(i)$\Rightarrow$(ii) For each $i$ and $j$, let $A_{i,j} =
(a_{i,j}^k)_{k\in \bb{N}}\in M_{\infty,1}(\cl Me_i)$ be a bounded
column operator and $B_{i,j} = (b_{i,j}^k)_{k\in \mathbb{N}}\in
M_{1,\infty}(\cl Nf_j)$ be a bounded row operator such that
$$\Phi(x) = \sum_{k=1}^{\infty} b_{i,j}^k x a_{i,j}^k = B_{i,j} (x\otimes 1) A_{i,j}, \ \ x\in f_j \cl B(H,K)e_i.$$
Assume, without loss of generality, that $\|A_{i,j}\| =
\|B_{i,j}\|$, and let $\alpha_{i,j} = \|A_{i,j}\|^2$. By \cite[Lemma
3.5]{stt2011}, there exist vectors $r_i = (r_i(l))_{l\in \bb{N}},s_j
= (s_j(l))_{l\in \bb{N}}\in \ell^2$ such that
$(r_i,\overline{s_j})_{\ell^2} = \alpha_{i,j}$.

Let $\bb{N}_1$, $\bb{N}_2$, $\bb{N}_3$ and $\bb{N}_4$ be copies of
$\bb{N}$, set $\Lambda = \bb{N}_1\times \bb{N}_2\times
\bb{N}_3\times \bb{N}_4$ and equip $\Lambda$ with the lexicographic
order, where each $\bb{N}_s$, $s = 1,2,3,4$, is given its natural
order. Let $A$ (resp. $B$) be the column (resp. row) operator given
by
$$A = \left(\frac{i r_i(l)}{j\sqrt{\alpha_{i,j}}} a_{i,j}^k\right)_{(i,j,k,l)\in \Lambda} \ \ \
(\mbox{resp. } B = \left(\frac{j s_j(l)}{i\sqrt{\alpha_{i,j}}}
b_{i,j}^k\right)_{(i,j,k,l)\in \Lambda}).$$ We note that $A$ (resp.
$B$) does not necessarily define a bounded row (resp. column)
operator, but it can be regarded as a linear operator densely
defined on $[\cup_{i\in\bb{N}}e_i H]$ (resp. $[\cup_{j\in \bb{N}} f_j K]$).

We have that the non-zero entries of $e_nA$ are the elements of the
family $\left(\frac{n r_n(l)}{j\sqrt{\alpha_{n,j}}}
a_{n,j}^k\right)_{j,k,l\in \bb{N}}$, and hence
$$\|e_nA\|\leq n\|r_n\|_2 \left(\sum_{j=1}^{\infty} \frac{1}{j^2}\right)^{1/2}.$$
Similarly,
$$\|Bf_n\|\leq n\|s_n\|_2 \left(\sum_{j=1}^{\infty} \frac{1}{j^2}\right)^{1/2}.$$

Suppose that $x = f_{j_0}x e_{i_0}$. Then
\begin{eqnarray*}
B(x\otimes 1)A & = & \sum_{l,k,i,j}\frac{j
s_j(l)}{i\sqrt{\alpha_{i,j}}} b_{i,j}^k x \frac{i
r_i(l)}{j\sqrt{\alpha_{i,j}}} a_{i,j}^k
=  \sum_{i,j,k} ij\alpha_{i,j} \frac{1}{i\sqrt{\alpha_{i,j}}} b_{i,j}^k  x \frac{1}{j\sqrt{\alpha_{i,j}}} a_{i,j}^k\\
& = & \sum_{k=1}^{\infty}
b_{i_0,j_0}^k x a_{i_0,j_0}^k =
B_{i_0,j_0}(x\otimes 1)A_{i_0,j_0} = \Phi(x).
\end{eqnarray*}
The claim now follows by choosing any enumeration of $\Lambda$.
\end{proof}

\begin{lemma}\label{ip}
Let $H$ and $K$ be Hilbert spaces and $\varphi\in\mathcal{B}(H^{\dd}\otimes K)$.
Then
$$(S_{\varphi}(\theta(\xi))h_1,h_2)_K = (\varphi\xi,h_1^{\dd}\otimes
h_2)_{H^{\dd}\otimes K}, \ \ \xi\in H^{\dd}\otimes K, h_1^{\dd}\in
H^{\dd}, h_2\in K.$$
\end{lemma}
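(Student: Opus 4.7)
The plan is to reduce the stated identity to a general identity about the canonical isomorphism $\theta$, verify that identity on elementary tensors, and then extend by linearity and continuity.

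First I would rewrite the left-hand side. By the definition of $S_\nph$, we have $S_\nph(\theta(\xi)) = \theta(\nph\xi)$, so the identity to prove becomes
\[
(\theta(\nph\xi)h_1,h_2)_K = (\nph\xi, h_1^\dd\otimes h_2)_{H^\dd\otimes K}.
\]
Setting $\eta = \nph\xi$, it suffices to establish the more basic identity
\[
(\theta(\eta)h_1,h_2)_K = (\eta, h_1^\dd\otimes h_2)_{H^\dd\otimes K}, \ \ \eta\in H^\dd\otimes K.
\]

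Next I would verify this identity on elementary tensors. For $\eta = x^\dd\otimes y$, the definition $\theta(x^\dd\otimes y)(z) = (z,x)_H y$ gives
\[
(\theta(x^\dd\otimes y)h_1, h_2)_K = (h_1,x)_H (y,h_2)_K.
\]
On the other hand, the Hilbert tensor product inner product factorises as
\[
(x^\dd\otimes y, h_1^\dd\otimes h_2)_{H^\dd\otimes K} = (x^\dd, h_1^\dd)_{H^\dd}(y,h_2)_K,
\]
and since $\partial : H\to H^\dd$ is a conjugate-linear isometric bijection, $(x^\dd,h_1^\dd)_{H^\dd} = \overline{(x,h_1)_H} = (h_1,x)_H$. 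Thus both sides coincide on elementary tensors.

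Finally, the identity extends to all $\eta\in H^\dd\otimes K$ by bilinearity (with the appropriate conjugate-linearity in the second slot) and joint continuity of both inner products: the left-hand side depends continuously on $\eta$ since $\theta$ is an isometry from $H^\dd\otimes K$ onto $\cl C_2(H,K)$ (and the operator norm dominates the Hilbert--Schmidt norm up to the factors $\|h_1\|\|h_2\|$ in the pairing), while the right-hand side is by construction an inner product of a Hilbert space. There is no real obstacle here; the only thing to be careful about is the conjugate-linearity of $\partial$, which is exactly what makes the two sides match rather than differ by a complex conjugate.
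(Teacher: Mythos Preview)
Your proof is correct and follows the same reduction as the paper: both recognise that the claim amounts to the identity $(\theta(\eta)h_1,h_2)_K = (\eta,h_1^{\dd}\otimes h_2)_{H^{\dd}\otimes K}$ for $\eta = \nph\xi$. The only difference is in how this underlying identity is verified: you check it directly on elementary tensors and extend by continuity, while the paper uses that $\theta$ is a Hilbert space isometry to rewrite the right-hand side as a $\cl C_2$ inner product and then invokes the trace formula $\tr(T\theta(h_1^{\dd}\otimes h_2)) = (Th_1,h_2)_K$. This is a purely stylistic difference; your direct verification is slightly more elementary, the paper's is a line shorter.
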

\begin{proof}
Using the identity
$$\tr\left(T\theta\left(h_1^{\dd}\otimes h_2\right)\right) = (Th_1,h_2)_K,\  T\in\mathcal{C}_2(H,K),$$
we see that
\begin{eqnarray*}
\left(\varphi\xi,h_1^{\dd}\otimes h_2\right)_{H^{\dd}\otimes K} & = &
\left(\theta\left(\varphi\xi\right),\theta\left(h_1^{\dd}\otimes
h_2\right)\right)_{\cl C_2} = \tr\left(\theta\left(\varphi\xi\right)\theta\left(h_1^{\dd}\otimes h_2\right)\right)\\
& = & \left(\theta\left(\varphi\xi\right)h_1,h_2\right)_K = \left(
S_{\varphi}\left(\theta(\xi)\right)h_1,h_2\right)_K.
\end{eqnarray*}
\end{proof}

\begin{theorem}\label{th_chavnmu}
Let $\varphi\in\Aff (\mathcal{M}^o\bar{\otimes}\mathcal{N})$.  Then
the following are equivalent:

(i) $\varphi$ is a local $\cl M,\cl N$-multiplier;

(ii) there exist increasing sequences $(p_n)_{n\in
\bb{N}}\subseteq\cl M'$ and $(q_n)_{n\in \bb{N}}\subseteq\cl N'$ of projections
such that $\vee p_n\otimes q_n=I$ and $\nph(p_n^{\dd}\otimes q_n)\in
\mcb(\cl M p_n, \cl N q_n)$ for every $n\in \bb{N}$;

(iii) there exist families $(e_i)_{i\in \bb{N}}\subseteq \cl M'$ and
$(f_j)_{j\in \bb{N}}\subseteq\mathcal{N}'$ of mutually orthogonal
projections such that $\vee_{i\in \bb{N}} e_i = I$ and $\vee_{j\in
\bb{N}} f_j = I$, and a net
$(\nph_{\nu})_{\nu}\subseteq\Assoc\mathcal{M}^o\bar{\otimes}\mathcal{N}_{\{e_i^{\dd}\otimes
f_j\}}$ such that $\nph_{\nu}(e_i^{\dd}\otimes f_j)\in \cl M^o
e_i^{\dd}\odot \cl N f_j$ and $\nph(e_i^{\dd}\otimes f_j) = {\rm
m-}\lim_{\nu} \nph_{\nu}(e_i^{\dd}\otimes f_j)$, for all $i,j$.
\end{theorem}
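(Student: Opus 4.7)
The plan is to establish the three equivalences via (ii) $\Rightarrow$ (i), (i) $\Rightarrow$ (ii), and (ii) $\Leftrightarrow$ (iii). The implication (ii) $\Rightarrow$ (i) is immediate: an increasing pair $(p_n), (q_n) \subseteq \cl M', \cl N'$ with $\vee_n (p_n \otimes q_n) = I$ is a special case of an $\cl M', \cl N'$-covering family.

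For (i) $\Rightarrow$ (ii), let $\{p_n \otimes q_n\}$ be a covering family. The commuting families $\{p_n\}$ and $\{q_n\}$ generate abelian von Neumann subalgebras of $\cl M'$ and $\cl N'$ respectively; after identifying these with $L^\infty$-algebras of standard measure spaces, I would transfer the rectangle-refinement argument of \cite[Lemma 3.4]{stt2011} to this setting to obtain pairwise orthogonal families $(e_i) \subseteq \cl M'$ and $(f_j) \subseteq \cl N'$ with $\vee_i e_i = \vee_j f_j = I$ such that each $e_i \otimes f_j$ lies below $\vee_{n \in F_{i,j}} (p_n \otimes q_n)$ for some finite set $F_{i,j}$. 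The operator analogue of \cite[Lemma 2.4(ii)]{stt2011}, applied with Lemma \ref{l_lm}(i), then yields $\nph(e_i^{\dd} \otimes f_j) \in \mcb(\cl Me_i, \cl Nf_j)$ for every $(i,j)$. Setting $P_N = \sum_{i \leq N} e_i$ and $Q_N = \sum_{j \leq N} f_j$, Lemma \ref{l_lm}(ii) delivers the desired increasing sequences satisfying $\nph(P_N^{\dd} \otimes Q_N) \in \mcb(\cl MP_N, \cl NQ_N)$, and $\vee_N(P_N \otimes Q_N) = I$.

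For (ii) $\Rightarrow$ (iii), set $e_1 = p_1$, $e_i = p_i - p_{i-1}$ ($i \geq 2$), and define $f_j$ analogously; these are pairwise orthogonal in $\cl M', \cl N'$ with suprema $I$. Since $e_i \otimes f_j \leq p_N^{\dd} \otimes q_N$ for $N = \max(i,j)$, Lemma \ref{l_lm}(i) gives $\nph(e_i^{\dd} \otimes f_j) \in \mcb(\cl Me_i, \cl Nf_j)$. Theorem \ref{th_prch} furnishes, for each $(i,j)$, a net $(\psi_\alpha^{(i,j)})_\alpha \subseteq \cl M^o e_i^{\dd} \odot \cl N f_j$ with $\nph(e_i^{\dd} \otimes f_j) = {\rm m-}\lim_\alpha \psi_\alpha^{(i,j)}$. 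I combine these by a diagonal argument: index $\nu$ over pairs $(F, \tau)$, where $F \subseteq \bb{N}^2$ is finite and $\tau$ picks an approximation parameter for each $(i,j) \in F$, and set $\nph_\nu = \sum_{(i,j) \in F} \psi_{\tau(i,j)}^{(i,j)}$; this finite sum lies in $\cl M^o \odot \cl N$, and mutual orthogonality of the $e_i$ and of the $f_j$ forces $\nph_\nu(e_i^{\dd} \otimes f_j) = \psi_{\tau(i,j)}^{(i,j)}$ when $(i,j) \in F$ and zero otherwise, producing the required piece-wise m-convergence. Conversely, (iii) $\Rightarrow$ (ii) follows by applying Theorem \ref{th_prch} to each piece to obtain $\nph(e_i^{\dd} \otimes f_j) \in \mcb(\cl Me_i, \cl Nf_j)$, then setting $p_n = \sum_{i \leq n} e_i$, $q_n = \sum_{j \leq n} f_j$ and invoking Lemma \ref{l_lm}(ii) on the finite orthogonal families $\{e_i\}_{i \leq n}, \{f_j\}_{j \leq n}$ to conclude $\nph(p_n^{\dd} \otimes q_n) \in \mcb(\cl Mp_n, \cl Nq_n)$.

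The principal obstacle I expect is the refinement step in (i) $\Rightarrow$ (ii): formulating a clean operator-algebraic version of \cite[Lemma 3.4]{stt2011} for commuting projections in arbitrary von Neumann algebras, together with the pasting lemma asserting that $\nph(e^{\dd} \otimes f) \in \mcb(\cl Me, \cl Nf)$ whenever $e \otimes f$ is dominated by a finite join of rectangles on which $\nph$ is already a c.b. multiplier. Once these two technical facts are in hand, the rest of the proof is bookkeeping with Lemma \ref{l_lm} and Theorem \ref{th_prch}.
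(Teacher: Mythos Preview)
Your proposal is correct and shares the same skeleton as the paper's proof: the passage from a general covering family to orthogonal families $(e_i),(f_j)$ via a masa/$L^\infty$ identification and \cite[Lemma~3.4]{stt2011}, the use of Lemma~\ref{l_lm} to pass between orthogonal blocks and increasing partial sums, and the direct argument for (iii)$\Rightarrow$(ii) by bounding $\|S_{\nph(e_i^{\dd}\otimes f_j)}\|_{\cb}$ are all exactly what the paper does. The ``pasting lemma'' you flag as the main obstacle is handled in the paper just as you suggest: one decomposes a finite commuting union of rectangles into finitely many mutually orthogonal ones inside the abelian algebras generated by the $p_n$ and $q_n$, and then applies Lemma~\ref{l_lm}(i) and (ii).

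The one genuine difference is how the approximating net in (iii) is produced. You invoke Theorem~\ref{th_prch} on each block $\nph(e_i^{\dd}\otimes f_j)$ separately and then splice the resulting nets together with a product/diagonal construction indexed by $(F,\tau)$. The paper instead proves (i)$\Rightarrow$(iii) directly: having obtained the orthogonal families $(e_i),(f_j)$ and the block multipliers, it applies Proposition~\ref{l_orthog} to the map $\Phi$ with $\Phi|_{f_j\cl B(H,K)e_i}=S^{**}_{\nph(e_i^{\dd}\otimes f_j)}$, obtaining a single family $(a_k)\subseteq\Assoc\cl M_{\{e_i\}}$, $(b_k)\subseteq\Assoc\cl N_{\{f_j\}}$ with $\Phi(x)=\sum_k b_kxa_k$, and then sets $\nph_N=\sum_{k=1}^N a_k^{\dd}\otimes b_k$. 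This gives a concrete \emph{sequence} (not merely a net) with a uniform representation across all blocks, which is what Proposition~\ref{l_orthog} was designed to supply. Your route is lighter---it treats Theorem~\ref{th_prch} as a black box and avoids Proposition~\ref{l_orthog} entirely---at the cost of a less explicit approximating family and a slightly fussier net index. Both arguments are valid.
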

\begin{proof}
(i)$\Rightarrow$(iii) Let $\{p_n\otimes q_n\}_{n\in \bb{N}}\subseteq
\cl M'\otimes \cl N'$ be a covering family of projections such that
$\nph(p_n^{\dd}\otimes q_n)\in \mcb(\cl M p_n, \cl N q_n)$. Let $\cl
C\subseteq \cl M'$ (resp. $\cl D\subseteq \cl N'$) be a masa in $\cl
M'$ (resp. $\cl N'$) containing the family $\{p_n\}_{n\in\bb{N}}$
(resp. $\{q_n\}_{n\in \bb{N}}$). Since the Hilbert spaces $H$ and
$K$ are assumed to be separable, $\cl C$ (resp. $\cl D$) is
isomorphic to the multiplication masa of $L^{\infty}(X,\mu)$ (resp.
$L^{\infty}(Y,\sigma)$), for some standard measure space $(X,\mu)$
(resp. $(Y,\sigma)$). Under this isomorphism, $p_n$ and $q_n$
correspond to some measurable sets $\alpha_n\subseteq X$ and
$\beta_n\subseteq Y$, respectively. Lemma 3.4 from \cite{stt2011}
implies that there exist families $\{X_i\}_{i\in \bb{N}}$ and
$\{Y_j\}_{j\in \bb{N}}$ of pairwise disjoint measurable subsets  of
$X$ and $Y$, respectively, such that $\cup_{i\in \bb{N}} X_i = X$,
$\cup_{j\in \bb{N}} Y_j = Y$, and each $X_i\times Y_j$ is contained
in the union of finitely many of the sets $\alpha_n\times \beta_n$.
Let $e_i$ (resp. $f_j$) be the projection in $\cl M'$ (resp. $\cl
N'$) corresponding to the set $X_i$ (resp. $Y_j$). Then $\vee_{i\in
\bb{N}} e_i = I$ and $\vee_{j\in \bb{N}} f_j = I$.

Fix $i$ and $j$; then $e_i\otimes f_j = \vee_{n\in \bb{N}} p_n e_i
\otimes q_n f_j$ where, in view of the properties of the families
$\{X_i\}$ and $\{Y_j\}$, the span is over a finite set of indices.
By Lemma \ref{l_lm} (i), $\nph(e_i^{\dd}p_n^{\dd} \otimes f_jq_n
)\in \mcb(\cl M e_ip_n, \cl N f_jq_n)$ for all $i,j,n$, and by Lemma
\ref{l_lm} (ii), $\nph(e_i^{\dd} \otimes f_j)\in \mcb(\cl M e_i, \cl
N f_j)$ for all $i,j$.

Let $\cl E = [\cup_{i,j} f_j \cl B(H,K) e_i]$ and let $\Phi : \cl
E\rightarrow \cl E$ be the map whose restriction to $f_j \cl B(H,K)
e_i$ coincides with $S^{**}_{\nph(e_i^{\dd}\otimes f_j)}$. By Proposition
\ref{l_orthog}, there exist operators $(a_k)_{k\in
\bb{N}}\subseteq\Assoc\mathcal {M}_{\{e_i\}_{i\in\mathbb{N}}}$ and
$(b_k)_{k\in\bb{N}}\subseteq\Assoc\mathcal
{N}_{\{f_j\}_{j\in\mathbb{N}}}$ such that $(e_ia_k)_{k\in \bb{N}}$
defines a bounded column operator for each $i$, $(b_kf_j)_{k\in
\bb{N}}$ defines a bounded row operator for each $j$, and $\Phi(x) =
\sum_{k=1}^{\infty} b_k x a_k$, for all $x\in \cl E$.

Let $\nph_N = \sum_{k=1}^N a_k^{\dd}\otimes b_k$. Then
$\nph_N(e_i^{\dd}\otimes f_j)$ belongs to $(\cl M^o e_i^{\dd})\odot
(\cl N f_j)$ for all $i,j$, and
$$\sup_{N\in \bb{N}}\|\nph_N(e_i^{\dd}\otimes f_j)\|_{\ph}
\leq \sup_{N\in \bb{N}}\left\|\sum_{k=1}^N b_k f_j\otimes e_i a_k
\right\|_{\hh} \leq \|(e_i a_k)_{k\in \bb{N}}\|\|(b_k f_j)_{k\in
\bb{N}}\|.$$ For all $\xi \in (e_i^{\dd}H^{\dd}) \otimes (f_j K)$
and all $h^{\dd}\in H^{\dd}$, $k\in K$, we have, by Lemma \ref{ip},
that
\begin{equation}\label{eq_idip}
(\Phi(\theta(\xi))h,k)_K = (\nph(e_i^{\dd}\otimes f_j)\xi,
h^{\dd}\otimes k)_{H^{\dd}\otimes K}.
\end{equation}
On the other hand, if $\Phi_N = S^{**}_{\nph_N}$, then we have that
\begin{equation}\label{eq_phin}
\Phi_N(\theta(\xi))\rightarrow_{N\rightarrow \infty} \Phi(\theta(\xi)) \mbox{ weakly, for all }
\xi \in (e_i^{\dd}H^{\dd}) \otimes (f_j K).
\end{equation}
It follows from (\ref{eq_idip}) and (\ref{eq_phin}) that
$(\nph_N(e_i^{\dd}\otimes f_j))_{N=1}^{\infty}$ converges
semi-weakly to $\nph(e_i^{\dd}\otimes f_j)$, for all $i,j$. Thus,
$\nph(e_i^{\dd}\otimes f_j) ={\rm m-}\lim_N \nph_N(e_i^{\dd}\otimes
f_j)$ and (iii) is established.

(iii)$\Rightarrow$(ii)
Fix $i$ and $j$ and let $\psi = \nph(e_i^{\dd}\otimes f_j)$ and $\psi_{\nu} = \nph_{\nu}(e_i^{\dd}\otimes f_j)$.
Then $(\psi_{\nu}) $ converges semi-weakly to
$\psi$ and $D = \sup_{\nu}\|\psi_{\nu}\|_{\ph} < +\infty$.
A standard argument shows that
$\|S_{\psi_{\nu}}(\theta(\xi))\| \leq D \|\theta(\xi)\|$ for all $\nu$ and all $\xi\in H^{\dd}\otimes K$.
Lemma \ref{ip} and the lower semi-continuity of the norm in the weak operator topology
imply that
$\|S_{\psi}(\theta(\xi))\| \leq D \|\theta(\xi)\|$ for all $\xi\in H^{\dd}\otimes K$; thus,
$\|S_{\psi}\|\leq D$. A similar argument shows that we have the stronger inequality
$\|S_{\psi}\|_{\cb}\leq D$.
Let $p_n = \vee_{i=1}^n e_i$ and
$q_n = \vee_{j=1}^n f_j$. The claim now follows from Lemma
\ref{l_lm} (ii).

(ii)$\Rightarrow$(i) follows from the fact that $(p_n\otimes
q_n)_{n\in \bb{N}}$ is a covering family.
\end{proof}

We next include analogous versions of some of the previous results for the case where
the respective projections are central. The first one is a \lq\lq local'' version of the
well-known representation representation theorem for completely bounded bimodular maps (see \cite{haagerup,smith1991}).

\begin{proposition}\label{p_clm}
Let $\cl M\subseteq \cl B(H)$ and $\cl N\subseteq \cl B(K)$ be von
Neumann algebras, $(P_n)_{n\in \bb{N}}\subseteq \cl M\cap \cl M'$
and $(Q_n)_{n\in \bb{N}}\subseteq \cl N\cap \cl N'$ be increasing
sequences of projections, $\cl E = [\cup_{n\in \bb{N}}Q_n\cl
B(H,K)P_n]$ and $\Phi : \cl E\rightarrow \cl B(H,K)$ be a linear
map. The following are equivalent:

(i) \ the restriction of $\Phi$ to $ Q_n\cl B(H,K)P_n$ is completely bounded, normal and
$ (\cl NQ_n)', (\cl MP_n)'$-modular;

(ii) there exist families $(a_k)_{k\in \bb{N}}\subseteq\mathcal{M}$
and $(b_k)_{k\in \bb{N}}\subseteq\mathcal{N}$ such that\linebreak
$(P_na_k)_{k\in \bb{N}}$ (resp. $(b_kQ_n)_{k\in \bb{N}}$) is a
bounded column (resp. row) operator for every $n$ and $\Phi(x) =
\sum_{k=1}^{\infty} b_k x a_k$, for every $x \in \cl E$.
\end{proposition}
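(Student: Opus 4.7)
The implication (ii)$\Rightarrow$(i) is the easier direction and I would handle it by direct computation. Given $x \in Q_n\cl B(H,K)P_n$, the centrality of $P_n$ and $Q_n$ yields $b_k x a_k = (b_k Q_n)\, x\, (P_n a_k)$, so $\Phi(x) = \sum_k (b_k Q_n)\, x\, (P_n a_k)$ and the element $u_n := \sum_k (b_k Q_n) \otimes (P_n a_k)$ lies in $\cl N\otimes_{\hh} \cl M \subseteq \cl N\otimes_{\eh}\cl M$ by the assumed boundedness of the row and column. The correspondence between elements of the extended Haagerup tensor product and normal c.b. maps (recalled just before Proposition \ref{c2pres}) then gives normality and complete boundedness of the restriction. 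For modularity, centrality of $P_n$ gives $(\cl M P_n)' = P_n\cl M' P_n \subseteq \cl M'$, so any $c \in (\cl M P_n)'$ commutes with each $a_k\in\cl M$, and similarly each $d\in(\cl N Q_n)'$ commutes with each $b_k\in\cl N$. Thus $\Phi(dxc) = \sum_k b_k dxc a_k = d\Phi(x)c$.

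For (i)$\Rightarrow$(ii), I would reduce the claim to Proposition \ref{l_orthog}. Set $e_i := P_i - P_{i-1}$ and $f_j := Q_j - Q_{j-1}$ (with $P_0 = Q_0 = 0$); these are pairwise orthogonal central projections in $\cl M$ and $\cl N$ respectively (hence also in $\cl M'$ and $\cl N'$), with $\vee_i e_i = \vee_j f_j = I$. Because $f_j \cl B(H,K) e_i \subseteq Q_{\max(i,j)}\cl B(H,K)P_{\max(i,j)}$ and, conversely, $Q_n x P_n = \sum_{i,j\leq n} f_j x e_i$, one obtains $\cl E = [\cup_{i,j} f_j\cl B(H,K) e_i]$. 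I would then verify the hypotheses of Proposition \ref{l_orthog}: choosing $n = \max(i,j)$, the central projections $f_j$ and $e_i$ satisfy $f_j \leq Q_n$, $e_i \leq P_n$, and lie in $(\cl N Q_n)'$ and $(\cl M P_n)'$ respectively, so the assumed $(\cl N Q_n)',(\cl M P_n)'$-modularity applied with $d=f_j$, $c=e_i$ gives $\Phi(f_j x e_i) = f_j \Phi(x) e_i$, showing $\Phi$ leaves $f_j\cl B(H,K)e_i$ invariant. The restriction is then normal and completely bounded by restriction, and the further $f_j\cl N'f_j, e_i\cl M'e_i$-modularity follows from the inclusions $f_j\cl N'f_j \subseteq (\cl N Q_n)'$ and $e_i\cl M'e_i \subseteq (\cl M P_n)'$ (valid because $f_j$ and $e_i$ are central and sit below $Q_n$ and $P_n$).

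Applying Proposition \ref{l_orthog}, I obtain families $(a_k)$ and $(b_k)$ with $\Phi(x) = \sum_k b_k x a_k$ on $\cl E$. Inspecting the construction in that proof, each $a_k$ is a scalar multiple of some $a_{i,j}^k \in \cl M e_i \subseteq \cl M$, so $a_k \in \cl M$, and similarly $b_k \in \cl N$. To check that $(P_n a_k)_k$ is a bounded column, I would use the fact that $P_n = \sum_{i\leq n} e_i$ together with the estimate
\[
\left\|\sum_k a_k^* P_n a_k\right\| \;=\; \left\|\sum_{i\leq n}\sum_k a_k^* e_i a_k\right\| \;\leq\; \sum_{i\leq n} \|(e_i a_k)_k\|^2,
\]
which is finite since each $(e_i a_k)_k$ is a bounded column by Proposition \ref{l_orthog}; the analogous argument gives boundedness of the row $(b_k Q_n)_k$.

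The step I expect to require the most care is the verification of the $f_j \cl N' f_j, e_i \cl M' e_i$-modularity of the restrictions from the stronger but differently-indexed $(\cl N Q_n)',(\cl M P_n)'$-modularity: one has to use centrality of $f_j$ and $e_i$ to extend corner-commutants to global commutants and then cut back. Once this is settled, the remaining work is organisational book-keeping that slots directly into the framework of Proposition \ref{l_orthog}.
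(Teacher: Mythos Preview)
Your proposal is correct and follows essentially the same route as the paper: for (i)$\Rightarrow$(ii) you pass to the orthogonal differences $e_i = P_i - P_{i-1}$, $f_j = Q_j - Q_{j-1}$, invoke Proposition \ref{l_orthog}, use centrality to place the resulting $a_k,b_k$ in $\cl M,\cl N$, and control $(P_n a_k)_k$ via the finitely many bounded columns $(e_i a_k)_k$ for $i\leq n$ --- exactly the paper's strategy, with the only cosmetic difference that the paper quotes the explicit estimate from the proof of Proposition \ref{l_orthog} rather than the triangle inequality you use. For (ii)$\Rightarrow$(i) you argue directly via the Haagerup tensor element $\sum_k (b_kQ_n)\otimes (P_na_k)$, whereas the paper instead reduces back to Proposition \ref{l_orthog}; both are straightforward and your direct argument is fine.
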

\begin{proof}
(i)$\Rightarrow$(ii) Let $e_1 = P_1$ (resp. $f_1 = Q_1$) and $e_i =
P_{i+1} - P_i$ (resp. $f_j = Q_{j+1} - Q_j$), $i\geq 2$ (resp.
$j\geq 2$). It is clear that $\Phi|_{f_j\cl B(H,K)e_i} : f_j\cl
B(H,K)e_i\rightarrow \cl B(H,K)$ is completely bounded and $\cl
N'f_j, \cl M'e_i$-modular. Let
$$A = \left(\frac{i r_i(l)}{j\sqrt{\alpha_{i,j}}} a_{i,j}^k\right)_{(i,j,k,l)\in \Lambda} \mbox{and } \
B = \left(\frac{j s_j(l)}{i\sqrt{\alpha_{i,j}}}
b_{i,j}^k\right)_{(i,j,k,l)\in \Lambda}$$ be the operators from the
proof of Proposition \ref{l_orthog}. Since the projections $P_n$ and
$Q_n$, $n\in \bb{N}$, are central, we have that the entries of $A$
(resp. $B$) belong to $\cl M$ (resp. $\cl N$). The estimates from
the proof of Proposition \ref{l_orthog} show that
$$\|P_nA\|^2\leq \sum_{d=1}^n d^2\|r_d\|_2^2 \sum_{j=1}^{\infty} \frac{1}{j^2} \mbox{ and }
\|BQ_n\|^2\leq \sum_{d=1}^n d^2\|s_d\|_2^2 \sum_{j=1}^{\infty}
\frac{1}{j^2}.$$ The conclusion follows.

(ii)$\Rightarrow$(i) follows from the fact that since the
projections are central, the conditions in (ii) hold for the
families $\{P_i-P_{i-1}\}_{i\in\mathbb{N}},
\{Q_j-Q_{j-1}\}_{j\in\mathbb{N}}$ of mutually orthogonal projections
and so the result follows as in the proof of the implication
(ii)$\Rightarrow$(i) of Proposition \ref{l_orthog}.
\end{proof}

Call a local $\cl M,\cl N$-multiplier \emph{central} if the covering
family $\{p_n\otimes q_n\}_{n\in \bb{N}}$ associated with $\nph$ as
in Definition \ref{d_lop} can be chosen from the centre of $\cl
M'\bar{\otimes}\cl N'$.

\begin{corollary}\label{c_ccbm}
Let $\varphi\in\Aff\mathcal{M}^o\overline{\otimes}\mathcal{N}$.
Then the following are equivalent:

(i) $\varphi$ is a central local $\cl M,\cl N$-multiplier;

(ii) there exist a net $(\nph_{\nu})\subseteq  \Aff \cl
M^o\bar\otimes\cl N$ and increasing sequences of projections
$(P_n)_{n\in \bb{N}}\subseteq\cl M\cap \cl M'$ and $(Q_n)_{n\in
\bb{N}}\subseteq\cl N\cap \cl N'$ such that
$\vee_{n\in\mathbb{M}}P_n\otimes Q_n=I$,
$\nph_{\nu}(P_n^{\dd}\otimes Q_n)\in \cl M^o P_n^{\dd}\odot \cl N
Q_n$ and $\varphi(P_n^{\dd}\otimes Q_n) = {\rm m-}\lim_{\nu}
\nph_{\nu}(P_n^{\dd}\otimes Q_n)$ for every $n\in \bb{N}$.

\end{corollary}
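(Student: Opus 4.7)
The plan is to follow the structure of the proof of Theorem \ref{th_chavnmu}, substituting Proposition \ref{p_clm} for Proposition \ref{l_orthog}; the centrality assumption removes the need to refine the covering family into mutually orthogonal blocks, since the central projections can be replaced directly by the sequence of their increasing joins.

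For (i) $\Rightarrow$ (ii), I would start from a central covering family $\{p_n\otimes q_n\}_{n\in\bb{N}}$ given by the definition, noting that the centre of $\cl M'\bar\otimes\cl N'$ is $(\cl M\cap\cl M')\bar\otimes(\cl N\cap\cl N')$, so $p_n\in \cl M\cap\cl M'$ and $q_n\in\cl N\cap\cl N'$. Setting $P_n=\vee_{k\le n}p_k$ and $Q_n=\vee_{k\le n}q_k$ keeps us inside the centres and produces increasing sequences with $\vee_n P_n\otimes Q_n=I$. Lemma \ref{l_lm} then yields $\nph(P_n^{\dd}\otimes Q_n)\in \mcb(\cl M P_n,\cl N Q_n)$ for every $n$. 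I would next define $\Phi$ on $\cl E=[\cup_n Q_n\cl B(H,K) P_n]$ so that its restriction to $Q_n\cl B(H,K)P_n$ coincides with $S^{**}_{\nph(P_n^{\dd}\otimes Q_n)}$; compatibility on overlaps is immediate from Lemma \ref{l_lm}~(i), and each restriction is completely bounded, normal, and $(\cl N Q_n)',(\cl M P_n)'$-modular. Applying Proposition \ref{p_clm} I would obtain sequences $(a_k)\subseteq\cl M$ and $(b_k)\subseteq\cl N$ with $(P_n a_k)_k,(b_k Q_n)_k$ bounded column and row operators for every $n$, satisfying $\Phi(x)=\sum_k b_k x a_k$ on $\cl E$. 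Setting $\nph_N=\sum_{k=1}^N a_k^{\dd}\otimes b_k\in\cl M^o\odot\cl N$ one has $\nph_N(P_n^{\dd}\otimes Q_n)\in\cl M^o P_n^{\dd}\odot\cl N Q_n$, and the uniform bound $\|\nph_N(P_n^{\dd}\otimes Q_n)\|_{\ph}\le\|(P_n a_k)\|\,\|(b_k Q_n)\|$ together with Lemma \ref{ip} and the weak convergence of the partial sums to $\Phi$ gives $\nph(P_n^{\dd}\otimes Q_n)=\textrm{m-}\lim_N \nph_N(P_n^{\dd}\otimes Q_n)$.

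For (ii) $\Rightarrow$ (i), I would apply Theorem \ref{th_prch} to the reduced von Neumann algebras $\cl M P_n$ and $\cl N Q_n$: the m-convergence hypothesis exhibits $\nph(P_n^{\dd}\otimes Q_n)$ as an m-limit of elements of $(\cl M P_n)^o\odot(\cl N Q_n)$, so it lies in $\mcb(\cl M P_n,\cl N Q_n)$; since $\{P_n\otimes Q_n\}$ is an increasing sequence of projections in the centre of $\cl M'\bar\otimes\cl N'$ with supremum $I$, it forms a central covering family and $\nph$ is a central local $\cl M,\cl N$-multiplier.

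I expect the only slightly delicate point to be verifying the modularity hypothesis required to invoke Proposition \ref{p_clm}: this follows because the symbol $u_{\nph(P_n^{\dd}\otimes Q_n)}$ lies in $(\cl N Q_n)\otimes_{\eh}(\cl M P_n)$ by the discussion preceding Proposition \ref{c2pres} applied in the reduced context, so $\Phi|_{Q_n\cl B(H,K)P_n}$ is automatically $(\cl N Q_n)',(\cl M P_n)'$-modular. Every remaining step is a direct transcription of the corresponding step in the proof of Theorem \ref{th_chavnmu}, simplified by the fact that the central projections $P_n,Q_n$ need no further decomposition.
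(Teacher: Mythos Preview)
Your direction (ii)$\Rightarrow$(i) is fine and matches the paper. The gap is in (i)$\Rightarrow$(ii), at the sentence ``Lemma \ref{l_lm} then yields $\nph(P_n^{\dd}\otimes Q_n)\in \mcb(\cl M P_n,\cl N Q_n)$ for every $n$.'' This does not follow. With $P_n=\vee_{k\le n}p_k$ and $Q_n=\vee_{k\le n}q_k$, the projection $P_n\otimes Q_n$ is in general strictly larger than $\vee_{k\le n}(p_k\otimes q_k)$: it contains all the ``off-diagonal'' pieces $p_k\otimes q_l$ with $k\ne l$, and you have no information about $\nph$ there. Lemma \ref{l_lm}~(ii) would let you assemble $\nph(P_n^{\dd}\otimes Q_n)$ from a finite orthogonal decomposition $P_n=\sum e_i$, $Q_n=\sum f_j$ provided you know $\nph(e_i^{\dd}\otimes f_j)\in\mcb$ for \emph{every} pair $(i,j)$; but an arbitrary central covering family gives no reason why each $e_i\otimes f_j$ should sit below some $p_k\otimes q_k$.

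This is exactly the work done by \cite[Lemma~3.4]{stt2011} inside the proof of Theorem \ref{th_chavnmu}: it refines the covering family to pairwise orthogonal $\{e_i\},\{f_j\}$ with the key property that each $e_i\otimes f_j$ lies under a finite union of the original rectangles. The paper's proof of the corollary simply observes that when the $p_k,q_k$ are central, this refinement can be carried out inside the (abelian) von Neumann algebras they generate, so the resulting $e_i,f_j$ are again central; then $P_n=\vee_{i\le n}e_i$ and $Q_n=\vee_{j\le n}f_j$ are the required increasing central sequences, and the operators $a_k,b_k$ produced (via Proposition \ref{l_orthog} or, equivalently, Proposition \ref{p_clm}) lie in $\cl M$ and $\cl N$. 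Your plan to invoke Proposition \ref{p_clm} is correct, but you cannot bypass the orthogonalisation step that makes $\nph(P_n^{\dd}\otimes Q_n)$ a multiplier in the first place.
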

\begin{proof}
(ii)$\Rightarrow$(i) follows from Theorem \ref{th_chavnmu}.

(i)$\Rightarrow$(ii) It is easy to see that the
operators $\varphi_N$ from the proof of Theorem \ref{th_chavnmu} can,
under the assumption of the corollary,
be chosen from $\cl M^o\odot\cl N$. The conclusion follows by
letting $P_n = \vee_{i=1}^n e_i$ and $Q_n = \vee_{j=1}^n f_j$, where
$(e_i)$ and $(f_j)$ are the sequences of projections from Theorem
\ref{th_chavnmu}.
\end{proof}

\section{Positive local operator multipliers}\label{s_pos}

In this section, we study completely positive
local operator multipliers. The main result is the characterisation Theorem \ref{th_cplm}. Throughout
this section, we fix a von Neumann algebra $\cl M$.

\begin{definition}
Let $\mathcal{M}\subseteq\mathcal{B}(H)$ be a von Neumann algebra
and $\varphi\in\mathcal{M}^o\overline{\otimes}\mathcal{M}$. We say
that $\varphi$ is a \emph{completely positive
$\mathbf{\mathcal{M}}$-multiplier} if the map
$S_{\varphi}:\mathcal{C}_2(H)\rightarrow\mathcal{C}_2(H)$, given by
\[S_{\varphi}\left(\theta\left(\xi\right)\right)=\theta(\varphi\xi),\
\xi\in H^{\dd}\otimes H,\] is completely positive and bounded in
$\|\cdot\|_{\op}$.
\end{definition}

Let
\[\mathcal{P}(\mathcal{M})=\left\{\sum_{k=1}^N b_k^{\dd}\otimes
b_k^* : b_k\in\mathcal{M}, N\in\mathbb{N}.\right\}\subseteq \cl M^o
\odot \cl M.\] It is clear that $\mathcal{P}(\mathcal{M})$ is a
cone, and it is easy to verify that if
$\psi\in\mathcal{P}(\mathcal{M})$ then the map $S_{\psi}$ is
completely positive; thus, every element of $\cl P(\cl M)$ is a
completely positive $\cl M$-multiplier. In the next theorem, we show
that completely positive $\cl M$-multipliers can be approximated by
elements of $\cl P(\cl M)$.

\begin{theorem}\label{th_monoc}
Let $\varphi\in\mathcal{M}^o\overline{\otimes}\mathcal{M}$.  Then
$\varphi$ is a completely positive $\mathcal{M}$-multiplier if and
only if there exists a net $(\varphi_{\nu})_{\nu\in J}\subseteq \cl
P(\cl M)$ such that \mbox{$\varphi= {\rm m-}\lim_{\nu} \nph_{\nu}$.}
\end{theorem}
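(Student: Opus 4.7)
The sufficiency direction $(\Leftarrow)$ will follow a semi-continuity argument analogous to the one used for $(\text{iii}) \Rightarrow (\text{ii})$ in Theorem~\ref{th_chavnmu}. Each $\nph_\nu = \sum_{k} b_k^\dd \otimes b_k^*$ yields $S_{\nph_\nu}(T) = \sum_k b_k^* T b_k$, which is completely positive, with $\|S_{\nph_\nu}\|_{\op} \leq \|\nph_\nu\|_\ph \leq D$. Combining the uniform $\ph$-bound with Lemma~\ref{ip} gives $\|S_\nph(\theta(\xi))\|_{\op} \leq D\|\theta(\xi)\|_{\op}$, so $\nph$ is an operator multiplier. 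Complete positivity survives the semi-weak limit: for $T_1,\dots,T_n\in \cl K(H)$ and $\eta_1,\dots,\eta_n\in H$, the inequality $\sum_{i,j}(S_{\nph_\nu}(T_i^*T_j)\eta_j,\eta_i) \geq 0$ can be re-expressed via Lemma~\ref{ip}, together with the rank-one decomposition of $T_i^*T_j$, as a pairing against $\nph_\nu$, which is preserved under semi-weak convergence and the uniform bound.

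For the necessity direction $(\Rightarrow)$, since $\nph\in \cl M^o\bar\otimes\cl M$, the symbol $u_\nph$ of $S_\nph^{**}$ lies in $\cl M\otimes_\eh\cl M$ by Proposition 5.5 of \cite{jltt2009}, so $S_\nph^{**}$ is a bounded, normal, completely positive, $\cl M'$-bimodule map on $\cl B(H)$. I apply Theorem~\ref{c_c2} to $S_\nph$ (with the trivial filtration $p_n=I$ and $\cl D = \cl M'$) to produce a column operator $V = (a_k)_{k=1}^{\infty}$ with entries affiliated with $(\cl M')' = \cl M$; since $S_\nph$ is globally bounded, the estimate $\|V|_{H_k}\|^2 \leq \|\Phi_k\|$ from the proof of Theorem~\ref{th_b} upgrades this to $V$ bounded and $a_k \in \cl M$, with $S_\nph(T) = \sum_{k=1}^\infty a_k^* T a_k$ for $T \in \cl K(H)$.

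Set $\nph_N = \sum_{k=1}^N a_k^\dd \otimes a_k^* \in \cl P(\cl M)$; two things remain: the uniform $\ph$-norm bound and the semi-weak convergence. Unwinding the opposite-algebra involution, $(a_k^\dd)^* = (a_k^*)^\dd$ and $a_k^\dd(a_k^*)^\dd = (a_k^*a_k)^\dd$, so
$$\|\nph_N\|_\ph \leq \left\|\sum_{k=1}^N a_k^\dd(a_k^\dd)^*\right\|^{1/2}\left\|\sum_{k=1}^N a_k^*(a_k^*)^*\right\|^{1/2} \leq \left\|\sum_{k=1}^\infty a_k^*a_k\right\| = \|S_\nph^{**}(I)\| < \infty,$$
uniformly in $N$. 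For the semi-weak convergence, Lemma~\ref{ip} yields $(\nph_N\xi, h_1^\dd\otimes h_2) = (S_{\nph_N}(\theta(\xi))h_1,h_2) = \sum_{k=1}^N(a_k^*\theta(\xi)a_k h_1, h_2)$, and the partial sums converge (in fact strongly on vectors) to $(S_\nph(\theta(\xi))h_1,h_2) = (\nph\xi, h_1^\dd\otimes h_2)$, giving $\nph = {\rm m-}\lim_N \nph_N$.

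The main obstacle is the extraction of a Stinespring-type representation $S_\nph(T) = \sum a_k^*Ta_k$ with coefficients $a_k$ inside $\cl M$ (not merely in $\cl B(H)$); this is precisely what the modular Stinespring statement in Theorem~\ref{c_c2}(ii) supplies (alternatively, one could invoke the classical Haagerup--Christensen--Effros--Sinclair result for normal completely positive modular maps on $\cl B(H)$). Once these $a_k$ are in hand, the rest of the argument reduces to bookkeeping with the opposite-algebra involution and the definition of $\|\cdot\|_\ph$.
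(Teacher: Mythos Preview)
Your proof is correct and follows essentially the same strategy as the paper's. For the necessity direction $(\Rightarrow)$, the paper simply asserts the existence of $(a_i)_{i=1}^\infty\subseteq\mathcal{M}$ defining a bounded column $V$ with $S_\varphi(x)=\sum a_i^*xa_i$ on $\mathcal{K}(H)$; you make this explicit by invoking Theorem~\ref{c_c2} with the trivial filtration and $\mathcal{D}=\mathcal{M}'$, which is perfectly fine. Your uniform bound $\|\varphi_N\|_{\ph}\leq\|\sum a_k^*a_k\|$ is exactly the paper's $\|\varphi_N\|_{\ph}\leq\|V\|^2$.

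The one place where your write-up is looser than the paper is the complete positivity half of the sufficiency direction. You test on $(T_i^*T_j)$ with $T_i\in\mathcal{K}(H)$ and appeal to ``the rank-one decomposition of $T_i^*T_j$'' together with the uniform bound. For a merely compact $T_i^*T_j$, the singular values need not be summable, so one cannot simply interchange the semi-weak limit in $\nu$ with the infinite rank-one sum using only the $\|\cdot\|_{\ph}$ bound; what actually makes this work is the uniform bound $\|S_{\varphi_\nu}\|\leq D$ combined with an operator-norm approximation of $T_i^*T_j$ by finite rank operators, and the fact that semi-weak convergence already gives convergence on finite linear combinations of elementary tensors. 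The paper sidesteps this by first proving complete positivity on matrices $(\theta(\xi_{ij}))$ with $\xi_{ij}\in H^{\dd}\odot H$ (finite sums, so no interchange is needed) and then approximating a general element of $M_l(\mathcal{C}_2(H))^+$ in Hilbert--Schmidt norm by positive finite-rank matrices. Either route closes the argument; just be aware that your phrasing hides this approximation step.
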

\begin{proof}
Suppose that $\varphi$ is a completely positive
$\mathcal{M}$-multiplier. By definition, $S_{\varphi}$ is bounded
and completely positive and thus, by the remarks before Proposition
\ref{c2pres}, it is $\mathcal{M}'$-bimodular. There exists a family
$(a_i)_{i=1}^{\infty}\subseteq\mathcal{M}$ of operators that defines
a bounded column operator $V$ such that
$$S_{\varphi}(x)=\sum_{i=1}^{\infty}a_i^*xa_i, \ \ \ \
x\in\mathcal{K}(H).$$ Let $\varphi_N = \sum_{i=1}^Na_i^{\dd}\otimes
a_i^*$, and note that $\varphi_N\in\mathcal{P}(\mathcal{M})$,
$N\in\mathbb{N}$. Now
\begin{equation}\label{eq_wc}S_{\varphi_N}(\theta(\xi))\rightarrow_{N\rightarrow\infty}
S_{\varphi}(\theta(\xi))\end{equation} weakly for all $\xi\in
H^{\dd}\otimes H$. It follows from Lemma \ref{ip} that
$(\varphi_N)_{N\in\mathbb{N}}$ converges semi-weakly to $\varphi$.
A standard estimate shows that
$$\sup_{N\in\mathbb{N}}\left\|\varphi_{N}\right\|_{\ph}  \leq\|V\|^2.$$
Thus, $\nph = {\rm m-}\lim_{\nu} \nph_{\nu}$.

Conversely, suppose that there exists a net
$(\varphi_{\nu})_{\nu\in J}\subseteq\mathcal{P}(\mathcal{M})$ such
that $(\varphi_{\nu})_{\nu\in J} $ converges semi-weakly to
$\varphi$ and $D = \sup_{\nu}\|\varphi_{\nu}\|_{\ph} < +\infty$.
As in the proof of the implication (iii)$\Rightarrow$(ii) of Theorem \ref{th_chavnmu}, we can see that
$\|S_{\varphi}\|\leq D$.

To obtain the complete positivity, suppose that
$(\theta(\xi_{ij}))_{i,j=1}^l$ is a positive element of
$\mathcal{C}_2(H^l)$, where $\xi_{ij}\in H^{\dd}\odot H$, $i,j =
1,\dots, l$. If $h = (h_1,\dots,h_l)\in H^l$ then

\begin{eqnarray*}
0&\leq&
\left(S_{\varphi_{\nu}}^{(l)}\left(\left(\theta\left(\xi_{ij}\right)\right)_{i,j=1}^l\right)h,h\right)_{H^n}\\
& = &
\sum_{i=1}^l\left(\sum_{j=1}^l\sum_{k=1}^{N_{\nu}}(a_k^{\nu})^*\theta(\xi_{ij})a_k^{\nu}h_j,h_i\right)_H
\\&=&\sum_{i,j=1}^l\left(\theta\left(\sum_{k=1}^{N_{\nu}}\left(a_k^{\nu}\right)^{\dd}
\otimes (a_k^{\nu})^*\xi_{ij}\right)h_j,h_i\right)_H
\\&=&\sum_{i,j=1}^l\left(\theta\left(\sum_{k=1}^{N_{\nu}}\left(a_k^{\nu}\right)^{\dd}
\otimes\left(a_k^{\nu}\right)^*\xi_{ij}\right),\theta\left(h_j^{\dd}\otimes
h_i\right)\right)_{\cl C_2(H)}
\\&=&\sum_{i,j=1}^l\left(\varphi_{\nu}\left( \xi_{ij}\right),h_j^{\dd}\otimes h_i\right)_{H^{\dd}\otimes
H}
\\&\rightarrow& \sum_{i,j=1}^l\left(\varphi\left( \xi_{ij}\right),h_j^{\dd}\otimes
h_i\right)_{H^{\dd}\otimes H} = \left(
S_{\varphi}^{(l)}((\theta(\xi_{ij}))_{i,j=1}^{l})h,h\right)_H
\end{eqnarray*}
and hence the map $S_{\varphi}$ is completely positive on the
*-algebra $\mathcal{F}(H)$ of all finite rank operators.

If $(\theta(\xi_{ij}))_{i,j=1}^l\in M_l(\mathcal{C}_2(H))^+$
then $(\theta(\xi_{ij}))$ can be approximated by positive matrices of finite rank operators in
the Hilbert-Schmidt norm, and hence in the operator norm. It follows from the
previous arguments that $S_{\varphi}$ is completely positive on
$\mathcal{C}_2(H)$. Thus $\varphi$ is a completely positive $\mathcal{M}$-multiplier.
\end{proof}

We next introduce the non-commutative version of positive local
Schur multipliers.

\begin{definition}\label{d_lcps}
Let $\mathcal{M}\subseteq\mathcal{B}(H)$ be a von Neumann algebra
and $\varphi\in\Aff(\mathcal{M}^o\overline{\otimes}\mathcal{M})$. We
say that $\varphi$ is a \emph{completely positive local
$\mathbf{\mathcal{M}}$-multiplier} if

(i) \ there exists an increasing sequence $(p_n)_{n=1}^{\infty}
\subseteq\mathcal{M}'$ of projections such that
$\vee_{n\in\mathbb{N}}p_n=I$ and
$\varphi\in\Assoc\mathcal{M}^o\overline{\otimes}\mathcal{M}_{\{p_n^{\dd}\otimes
p_n\}_{n\in\mathbb{N}}}$;

(ii) $\varphi(p_n^{\dd}\otimes p_n)$ is a completely positive
$\mathcal{M}p_n$-multiplier.
\end{definition}

We note that every completely positive local
$\mathcal{M}$-multiplier is a local $\mathcal{M}$-multiplier.
We will call the sequence $(p_n)_{n\in \bb{N}}$ of projections from Definition \ref{d_lcps}
an \emph{implementing sequence} for $\nph$.

\begin{theorem}\label{th_cplm}
An operator
$\varphi\in\Aff(\mathcal{M}^o\overline{\otimes}\mathcal{M})$ is a
completely positive local $\mathcal{M}$-multiplier with an implementing sequence $(p_n)_{n\in \bb{N}}$
if and only if
there exists a net $(\varphi_{\nu})_{\nu\in
J}\subseteq\Aff\mathcal{M}^o\odot\Aff\mathcal{M}$ such that
$\varphi_{\nu}(p_n^{\dd}\otimes p_n)\in
\mathcal{P}(\mathcal{M}p_n)$, and
$\varphi(p_n^{\dd}\otimes p_n)={\rm m-}\lim_{\nu}
\varphi_{\nu}(p_n^{\dd}\otimes p_n)$, for each $n\in \bb{N}$.
\end{theorem}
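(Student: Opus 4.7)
The theorem is the ``local'' counterpart of Theorem~\ref{th_monoc}, so the strategy is to reduce everything to that result while maintaining coherence of the approximating net across all indices $n$. The main tool for this coherence is Theorem~\ref{repv} (together with Remark~\ref{r_rc}).

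For the forward direction, assume $\nph$ is a completely positive local $\cl M$-multiplier with implementing sequence $(p_n)_{n\in\bb{N}}\subseteq\cl M'$. Each $\nph(p_n^{\dd}\otimes p_n)$ is a completely positive $\cl Mp_n$-multiplier, so $S_{\nph(p_n^{\dd}\otimes p_n)}$ extends to a bounded, completely positive, $p_n\cl M'p_n$-bimodular map on $\cl K(p_n H)$. Since $\nph(p_k^{\dd}\otimes p_k)$ is the compression of $\nph(p_n^{\dd}\otimes p_n)$ whenever $k\leq n$, these extensions glue consistently into a single map $\Phi:\cup_{n}\cl K(p_nH)\to\cup_{n}\cl B(p_nH)$ satisfying the hypotheses of Theorem~\ref{repv} with $\cl D=\cl M'$. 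Thus I obtain a sequence $(a_i)_{i\in\bb{N}}$ of closable operators affiliated with $\cl M$ such that
\[
\Phi(x)=\sum_{i=1}^{\infty}a_i^*xa_i,\qquad x\in\textstyle\cup_{n}\cl K(p_nH),
\]
with each sequence $(a_ip_n)_{i\in\bb{N}}$ defining a bounded column operator (this is exactly the output of the construction in the proof of Theorem~\ref{repv}, cf.\ Remark~\ref{r_rc}).

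Next, set $\nph_N=\sum_{i=1}^{N}a_i^{\dd}\otimes a_i^*$; these lie in $\Aff\cl M^o\odot\Aff\cl M$. Because Remark~\ref{r_rc} gives $a_ip_n\in\cl Mp_n$, we have
\[
\nph_N(p_n^{\dd}\otimes p_n)=\sum_{i=1}^{N}(a_ip_n)^{\dd}\otimes(a_ip_n)^{*}\in\cl P(\cl Mp_n),\qquad n\in\bb{N}.
\]
Fixing $n$ and arguing as in the first half of Theorem~\ref{th_monoc}, the partial sums $S_{\nph_N(p_n^{\dd}\otimes p_n)}(\theta(\xi))$ converge weakly to $S_{\nph(p_n^{\dd}\otimes p_n)}(\theta(\xi))$ for every $\xi\in p_n^{\dd}H^{\dd}\otimes p_nH$, and Lemma~\ref{ip} then gives semi-weak convergence of $\nph_N(p_n^{\dd}\otimes p_n)$ to $\nph(p_n^{\dd}\otimes p_n)$. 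The uniform bound $\sup_N\|\nph_N(p_n^{\dd}\otimes p_n)\|_{\ph}\leq\|Vp_n\|^2$, where $V=(a_1,a_2,\ldots)^t$, is obtained by the standard Haagerup-norm estimate. Hence $\nph(p_n^{\dd}\otimes p_n)={\rm m\text{-}}\lim_{N}\nph_N(p_n^{\dd}\otimes p_n)$ for every $n$.

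For the converse, fix $n$ and apply Theorem~\ref{th_monoc} directly to the net $(\nph_{\nu}(p_n^{\dd}\otimes p_n))_{\nu}\subseteq\cl P(\cl Mp_n)$: its m-limit $\nph(p_n^{\dd}\otimes p_n)$ is therefore a completely positive $\cl Mp_n$-multiplier. The requirement that $\nph\in\Assoc\cl M^o\bar\otimes\cl M_{\{p_n^{\dd}\otimes p_n\}}$ is automatic: since $\nph\in\Aff(\cl M^o\bar\otimes\cl M)$ and $p_n\in\cl M'$ gives $p_n^{\dd}\otimes p_n\in(\cl M^o\bar\otimes\cl M)'$, this projection commutes with $|\nph|$ and reduces $\nph$ to a bounded element of $\cl M^o p_n^{\dd}\bar\otimes\cl Mp_n$. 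Both conditions of Definition~\ref{d_lcps} are thus verified, showing $\nph$ is a completely positive local $\cl M$-multiplier with implementing sequence $(p_n)$.

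The main obstacle in this plan is the forward direction: one cannot simply invoke Theorem~\ref{th_monoc} at each $n$ separately, because that would produce a different net for each $n$ with no a priori compatibility. The crucial point is Theorem~\ref{repv}, which provides a single family $(a_i)$ of closable operators affiliated with $\cl M$ that simultaneously represents $\nph$ on every $\cl K(p_nH)$, together with the key localisation property $a_ip_n\in\cl Mp_n$ from Remark~\ref{r_rc} that guarantees the truncations $\nph_N(p_n^{\dd}\otimes p_n)$ lie in the correct cone $\cl P(\cl Mp_n)$.
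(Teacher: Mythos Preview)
Your proposal is correct and follows the paper's proof essentially verbatim: the forward direction uses Theorem~\ref{repv} with $\cl D=\cl M'$ together with Remark~\ref{r_rc} to produce a single family $(a_i)$ affiliated with $\cl M$ satisfying $a_ip_n\in\cl Mp_n$, sets $\nph_N=\sum_{i=1}^Na_i^{\dd}\otimes a_i^*$, and then argues via Lemma~\ref{ip} exactly as in Theorem~\ref{th_monoc}; the converse applies Theorem~\ref{th_monoc} at each level $n$.

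One minor correction in the converse: you write that the $\Assoc$ condition is ``automatic'' because $p_n^{\dd}\otimes p_n\in(\cl M^o\bar\otimes\cl M)'$ commutes with $|\nph|$ and therefore ``reduces $\nph$ to a bounded element''. Commutation with a projection in the commutant does not by itself force the compression of an affiliated operator to be bounded, nor does it give $(p_n^{\dd}\otimes p_n)(H^{\dd}\otimes H)\subseteq\dom(\nph)$. What actually guarantees both is the hypothesis: the equality $\nph(p_n^{\dd}\otimes p_n)={\rm m-}\lim_\nu\nph_\nu(p_n^{\dd}\otimes p_n)$ presupposes that the left-hand side is a well-defined bounded operator on $p_n^{\dd}H^{\dd}\otimes p_nH$. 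The paper's own proof is equally brief on this point and simply invokes Definition~\ref{d_lcps}.
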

\begin{proof}
Suppose that $\varphi$ is a completely positive local
$\mathcal{M}$-multiplier with an implementing sequence $(p_n)_{n\in \bb{N}}$.
By definition, $\vee_{n\in\mathbb{N}}p_n=I$ and, if
$H_n=p_nH$, the map $S_{\varphi}|_{\mathcal{C}_2(H_n)}$ is bounded,
completely positive and $p_n\mathcal{M}'p_n$-bimodular.

Since the map is bounded on $\mathcal{C}_2(H_n)$, it can be extended
to $\mathcal{K}(H_n)$, and this extension preserves the bimodularity
and complete positivity. Thus, by Theorem \ref{repv}, there exists a
family $\{a_k\}_{k=1}^{\infty}$ of closable operators affiliated
with $\mathcal{M}$ that, as noted in Remark \ref{r_rc}, are such that $a_k
p_n\in\mathcal{M}p_n$ and \mbox{$S_{\varphi}(x)=\sum_{k=1}^{\infty}
a_k^*xa_k,$} for $x\in\cup_{n=1}^{\infty}\mathcal{K}(H_n)$. Recall
that $(a_1p_n,a_2p_n,\dots)^t$ is a bounded column operator, say
$V_n$. We define $\varphi_N=\sum_{k=1}^Na_k^{\dd}\otimes a_k^*$,
$n\in \bb{N}$. Clearly,
$\varphi_N\in\Aff\mathcal{M}^o\odot\Aff\mathcal{M}$ for each
$N\in\mathbb{N}$ and since $a_kp_n\in \mathcal{M}p_n$, it follows
that $\varphi_N(p_n^{\dd}\otimes
p_n)\in\mathcal{P}(\mathcal{M}p_n)$.

Analogously to (\ref{eq_wc}), we see that the sequence
$\left(S_{\varphi_N}|_{\mathcal{C}_2(H_n)}\left(\theta\left(\xi\right)\right)\right)_{N=1}^{\infty}$
converges weakly to
$S_{\varphi}|_{\mathcal{C}_2(H_n)}\left(\theta\left(\xi\right)\right)$.
By Lemma \ref{ip}, $\left(\varphi_N\left(p_n^{\dd}\otimes
p_n\right)\right)_{N\in\mathbb{N}}$ converges semi-weakly to
$\varphi\left(p_n^{\dd}\otimes p_n\right)$. As before, one can easily see that
$$\sup_{N\in\mathbb{N}}\left\|\varphi_{N}\left(p_n^{\dd}\otimes
p_n\right)\right\|_{\ph} \leq \|V_n\|^2.$$

To prove the converse, observe that Theorem \ref{th_monoc}
shows that, under the stated assumptions, $\varphi\left(p_n^{\dd}\otimes p_n\right)$ is a
completely positive $\mathcal{M}p_n$-multiplier for each $n\in
\bb{N}$. Since $\{p_n^{\dd}\otimes p_n\}_{n\in \bb{N}}$ is a
covering family, we have that $\varphi$ is a completely positive
local $\mathcal{M}$-multiplier.
\end{proof}


\begin{bibdiv}\begin{biblist}

\bib{a}{article} {
    AUTHOR = {W. Arveson},
     TITLE = {Operator Algebras and Invariant Subspaces},
   JOURNAL = {Ann. Math.},
FJOURNAL = {Annals of Mathematics},
    VOLUME = {100},
      YEAR = {1974},
     PAGES = {433--532},
      ISSN = {}}

\bib{ass1993}{article} {
    AUTHOR = {Allen, Stephen D.},author={Sinclair, Allan M.},author={Smith, Roger R.},
     TITLE = {The ideal structure of the {H}aagerup tensor product of {$C^\ast$}-algebras},
   JOURNAL = {J. Reine Angew. Math.},
FJOURNAL = {Journal  f\"{u}r die Reine und Angewandte Mathematik},
    VOLUME = {442},
      YEAR = {1993},
     PAGES = {111--148},
      ISSN = {0075-4102}}

      \bib{BS4}{article}{
   author={M.S. Birman},
   author={M.Z. Solomyak},
   title={Double operator integrals in a Hilbert space},
   journal={Int. Eq. Oper. Th.},
   volume={47},
   date={2003},
   number={2},
   pages={131--168}}

\bib{blm}{book}{author={Blecher, D.P.},author={Le Merdy,
C.},title={Operator algebras and their modules -- an operator space
approach},publisher={Oxford University Press},year={2004}}

      \bib{bs1992}{article}{
   author={Blecher, David P.},
   author={Smith, Roger R.},
   title={The dual of the Haagerup tensor product},
   journal={J. London Math. Soc. (2)},
   volume={45},
   date={1992},
   number={1},
   pages={126--144}
}

      \bib{bf}{article}{
   author={Bozejko},
   author={Fendler},
   title={Herz-Schur multipliers and completely bounded multipliers of the Fourier algebra of a locally compact group},
   journal={Boll. Un. Mat. Ital. A (6)},
   volume={3},
   date={1984},
   number={2},
   pages={297--302}
}

      \bib{christensen-sinclair}{article}{
   author={Christensen, E.},
   author={Sinclair, A.M.},
   title={Representations of completely bounded multilinear operators},
   journal={J. Funct. Anal.},
   volume={72},
   date={1987},
   pages={151--181}
}

\bib{dsII}{book}{author={Dunford, N.},author={Schwartz, J.T.},title={Linear operators. Part II. Spectral Theory},
publisher={John Wiley \& sons},year={1988}}

\bib{er}{book}{author={Effros, E.G.},author={Ruan,
Z.J.},title={Operator Spaces},publisher={Clarendon Press,
Oxford},year={2000}}

      \bib{effros_ruan}{article}{
   author={Effros, E.G.},
   author={Ruan, Z.J.},
   title={Operator spaces tensor products and Hopf convolution algebras},
   journal={J. Operator Theory},
   volume={50},
   date={2003},
   pages={131--156}
}

\bib{folland}{book}{author={Folland, },title={A course in abstract harmonic analysis},publisher={CRC Press},year={1995}}

      \bib{Gro}{article}{
   author={Grothendieck, A.},
   title={Resume de la theorie metrique des produits tensoriels topologiques},
   journal={Boll. Soc. Mat. Sao-Paulo},
   volume={8},
   date={1956},
   pages={1--79}
}

 \bib{haagerup}{article}{
   author={Haagerup, U},
   title={Decomposition of completely bounded maps on operator algebras},
   journal={Unpublished manuscript}
}

      \bib{hp}{article}{
   author={Hansen, F},
   author={Pedersen, G.},
   title={Jensen's inequality for operators and L\"owner's theorem},
   journal={Math. Ann.},
   volume={258},
   date={1982},
   pages={229--241}
}

      \bib{joita}{article}{
   author={Joita, M.},
   title={Strict completely positive maps between locally C*-algebras and representations on Hilbert modules},
   journal={J. London Math. Soc.},
   volume={66},
   date={2002},
   pages={421--432}
}

   \bib{j1992}{article}{
   author={Jolissaint, Paul},
   title={A characterization of completely bounded multipliers of Fourier
   algebras},
   journal={Colloq. Math.},
   volume={63},
   date={1992},
   number={2},
   pages={311--313},
   issn={0010-1354},
}

\bib{jltt2009}{article}{
   author={Juschenko, K.},
   author={Levene, R. H.},
   author={Todorov, I. G.},
   author={Turowska, L.},
   title={Compactness properties of operator multipliers},
   journal={J. Funct. Anal.},
   volume={256},
   date={2009},
   number={11},
   pages={3772--3805},
   issn={0022-1236},
}

\bib{jtt2009}{article}{
    AUTHOR = {Juschenko, K.},author={Todorov, I. G.}, author={Turowska, L.},
     TITLE = {Multidimensional operator multipliers},
   JOURNAL = {Trans. Amer. Math. Soc.},
  FJOURNAL = {Transactions of the American Mathematical Society},
    VOLUME = {361},
      YEAR = {2009},
    NUMBER = {9},
     PAGES = {4683--4720},
      ISSN = {0002-9947}}

\bib{kr1997}{book}{author={Kadison, R.V.},author={Ringrose,
J.R.},title={Fundamentals of the theory of Operator Algebras, Volume
I: Elementary Theory},publisher={American Mathematical
Society},year={1997}}

\bib{kp2005}{article}{author={Katavolos,A.},author={Paulsen, V.I.},
title={On the ranges of Bimodule Projections}, JOURNAL = {Canad.
Math. Bull.},
  FJOURNAL = {Canadian Mathematical Bulletin. Bulletin Canadien de
              Math\'ematiques},
    VOLUME = {48},
      YEAR = {2005},
    NUMBER = {1},
     PAGES = {97--111},
      ISSN = {0008-4395},}

\bib{ks2006}{article}{author={Kissin, Edward}, author={Shulman,
Victor S.},
 TITLE = {Operator multipliers},
   JOURNAL = {Pacific J. Math.},
  FJOURNAL = {Pacific Journal of Mathematics},
    VOLUME = {227},
      YEAR = {2006},
    NUMBER = {1},
     PAGES = {109--141},
      ISSN = {0030-8730},

}

      \bib{lau}{article}{
   author={Lau, T. M.},
   title={Uniformly continuous functionals on the Fourier algebra of any locally compact group}
   journal={Trans. Amer. Math. Soc.},
   volume={251},
   date={1979},
   NUMBER = {}
   pages={39--59}
}

\bib{paulsen}{book}{author={Paulsen, V. I.},title={Completely bounded maps and operator
algebras},publisher={Cambridge University Press},year={2002}}

\bib{peller_two_dim}
{article}{author={V. V. Peller},
 TITLE = {Hankel operators in the perturbation
theory of unitary and selfadjoint operators},
   JOURNAL = {Funktsional. Anal. i Prilozhen.},
  FJOURNAL = {Funktsional. Anal. i Prilozhen.},
    VOLUME = {19},
      YEAR = {1985},
    NUMBER = {2},
     PAGES = {37--51}}

      \bib{powers}{article}{
   author={Powers, R. T.},
   title={Selfadjoint algebras of unbounded operators II},
   journal={Trans. Amer. Math. Soc.},
   volume={187},
   date={1974},
   NUMBER = {1}
   pages={261--293}
}

\bib{Pi}{book}{author={Pisier, G.},title={Similarity Problems and Completely
Bounded Maps}, publisher={Springer-Verlag},year={2001}}

\bib{pisier_intr}{book}{author={Pisier, G.},title={Introduction to Operator Space Theory},
publisher={Cambridge University Press},year={2003}}

\bib{stt2011}{article}
    {AUTHOR = {Shulman, V. S.},author={Todorov, I. G.},author={Turowska, L.},
     TITLE = {Closable multipliers},
   JOURNAL = {Integral Equations Operator Theory},
  FJOURNAL = {Integral Equations and Operator Theory},
    VOLUME = {69},
      YEAR = {2011},
    NUMBER = {1},
     PAGES = {29--62},
      ISSN = {0378-620X}}

\bib{smith1991}{article} {
    AUTHOR = {Smith, R. R.},
     TITLE = {Completely bounded module maps and the {H}aagerup tensor
              product},
   JOURNAL = {J. Funct. Anal.},
  FJOURNAL = {Journal of Functional Analysis},
    VOLUME = {102},
      YEAR = {1991},
    NUMBER = {1},
     PAGES = {156--175},
      ISSN = {0022-1236}}

\bib{schudgen}{book}{author={K. Shm\"udgen},title={Unbounded operator algebras and representation theory},publisher={Birkhauser},year={1990}}

\bib{spronk}{article} {
    AUTHOR = {Spronk, N.},
     TITLE = {Measurable Schur multipliers and
completely bounded multipliers of the Fourier algebras},
   JOURNAL = {Proc. London Math. Soc.},
  FJOURNAL = {Proceedings of the London Mathematical Society},
    VOLUME = {89},
      YEAR = {2004},
    NUMBER = {1},
     PAGES = {161--192}}

\end{biblist}\end{bibdiv}

\end{document}